\newtheorem{theorem}{Theorem}[section]
\newtheorem{proposition}[theorem]{Proposition}
\newtheorem{lemma}[theorem]{Lemma}
\newtheorem{definition}[theorem]{Definition}
\newtheorem{remark}[theorem]{Remark}
\newtheorem{example}[theorem]{Example}
\newcommand{\R}{\mathbb R}
\newcommand{\RN}{\mathbb R^N}
\newcommand{\SN}{\mathbb{S}^{N-1}}
\newcommand{\I}{\mathcal{I}}
\title{Liouville type results for the fractional truncated Laplacians in a half-space}
\author{Giulio Galise and Hitoshi Ishii
}
\date{}
\begin{document}

\maketitle

\begin{abstract}
\noindent
Existence issues of viscosity supersolutions in the half-space $\R^N_+$, for a class of fully 
nonlinear integral equations involving the  fractional truncated Laplacians and a power-like nonlinearity in the unknown function, are addressed in this paper, the aim being to obtain  estimates on  the threshold exponents separating the existence from the nonexistence regimes.
\end{abstract}

\vspace{0.5cm}

{\small
\noindent
\textbf{MSC 2010:} 26D10, 35D40 , 45M20, 47G10.

\smallskip
\noindent
\textbf{Keywords:} Fully nonlinear integral operators, viscosity solutions, Liouville type theorem.}

\tableofcontents

\section{Introduction}

This paper is concerned with the existence/nonexistence of nontrivial viscosity supersolutions to 
\begin{equation}\label{2912eq1}
\left\{\begin{array}{rl}
\I_k^\pm u(x)+u^p(x)=0 & \text{in $\mathbb R^N_+$}\\
u=0 & \text{in $\RN\backslash\mathbb R^N_+$,}
\end{array}\right.
\end{equation} 
where $\mathbb R^N_+=\left\{x=(x',x_N)\in\mathbb R^{N-1}\times\mathbb R\,:\;x_N>0\right\}$, $N\geq2$ and $p$ is a real exponent.\\
Problems \eqref{2912eq1} involve  the nonlinear integral operators $\I_k^\pm$ that are extremal among linear operators with one dimensional fractional diffusion. They are defined as follows: given $\xi\in\RN$ with $|\xi|=1$, let
\begin{equation}\label{eq1}
\I_\xi u(x)=C_s\int \limits_{0}^{+\infty}\frac{u(x+\tau\xi)+u(x-\tau\xi)-2u(x)}{\tau^{1+2s}}\,d\tau,
\end{equation}
where $s\in(0,1)$ and $u:\RN\mapsto \R$ is any  sufficiently smooth function for which the right hand side of \eqref{eq1} is well defined. The quantity $C_s$ is a positive normalizing constant for which the asymptotic $$\I_\xi u(x)\to\left\langle D^2u(x)\xi,\xi\right\rangle \qquad\text{as $s\to1^-$}$$ is guaranteed. The operator $\I_\xi u$, which is linear in $u$, acts as the $2s$-fractional derivative of $u$ along the direction $\xi$.\\
For any integer $k\in\left\{1,\ldots,N\right\}$, we denote by ${\mathcal V}_k$ the family of $k$-dimensional orthonormal sets in $\RN$, i.e.
\begin{equation}\label{1710eq1}
{\mathcal V}_k=\left\{\left\{\xi_1,\ldots,\xi_k\right\}\in\underbrace{\mathbb R^N\times\ldots\times\mathbb R^N}_{k\;\text{times}}:\,\;\left\langle\xi_i,\xi_j\right\rangle=\delta_{ij}\;\;\;\forall i,j\in\left\{1,\ldots,k\right\}\right\}.
\end{equation}
The operators $\I_k^-u$ and $\I_k^+u$, named \emph{minimal and maximal $k$-th fractional truncated Laplacian} respectively, are defined  through the formulas
\begin{equation}\label{eq2}
\begin{split}
\I^-_ku(x)&=\inf_{\left\{\xi_i\right\}_{i=1}^k\in{\mathcal V}_k}\sum_{i=1}^k\I_{\xi_i}u(x)\\
\I^+_ku(x)&=\sup_{\left\{\xi_i\right\}_{i=1}^k\in{\mathcal V}_k}\sum_{i=1}^k\I_{\xi_i}u(x).
\end{split}
\end{equation}
They converge, as $s\to1^-$, to the sums of the smallest and largest $k$ eigenvalues of the Hessian matrix of $u$ which are degenerate elliptic operators arising in geometric contexts, see e.g. \cite{AmS,HMW,HL,Sha,Wu}. These limit PDEs constitute a class of Hamilton-Jacobi-Bellman 
equations of stochastic optimal control. We refer for this to, e.g.,\cite{FS}. 

\smallskip

The study of integral operators with diffusion supported along 
 lower dimensional sets of $\R^N$ generated some interest in the PDE community in recent years, also motivated by its connection with stable L\'evy process in Probability theory. As far as regularity issues for solutions associated to this kind of operators are concerned, we refer e.g. to \cite{BC,ROS} and the references therein.\\ 
The operator $\I^-_1$ leads to the notion of fractional first eigenvalue and it has been considered in \cite{DPQR} in order to extend the notion of convexity to the nonlocal setting. In \cite{BDPQR} the associated evolution problem has been also studied. In \cite{RR}, through min-max type formula involving one-dimensional fractional Laplacians,  the trace fractional Laplacian in  $\R^N$ was introduced, and its main properties were investigated. Further nonlocal approximations of the truncated 
Laplace
operators can be found in \cite{S} where related properties to the maximum principle were considered.

\medskip

Liouville type theorems play a fundamental role in the theory of integro-differential elliptic PDE. In addition 
 to 
its intrinsic interests concerning classifications of solutions, they are also employed to get existence results in bounded domains via blow-up procedure, see e.g \cite{GS}. The existence issue of solutions to inequalities posed in cone-like or exterior domains has been treated and generalized in different contexts. As far as  fully nonlinear uniformly elliptic operators involving a power-like dependence in $u$ are concerned, we refer e.g. to \cite{AS1,AS2,QS}, where the presence of \emph{critical exponents} that sharply characterize the range of $p$'s for which positive supersolutions exist or not rely on the existence of fundamental solutions and  various forms of the strong maximum principle and Harnack type inequalities. Moreover the existence of such fundamental solutions is obtained by means of an abstract topological fixed point theorem and compactness estimates in H\"older spaces. Such approach, which has been recently extended in \cite{NdPQ} to fully nonlinear integral operators, does not seem applicable to our strongly degenerate setting since elliptic estimates and Harnack inequalities are missing. Some  differences and counterexamples between the operators \eqref{eq2} we are interested in the present paper and more standard integral operators, like e.g. the fractional Laplacian $(-\Delta)^s$, can be found in \cite{BGS}. Nevertheless, using the following weak form of ellipticity 
\begin{equation}\label{well}
\text{$u\geq v$\; in\; $\R^N$\; and\; $u(x_0)=v(x_0)$}\quad\Longrightarrow\quad \I^\pm_ku(x_0)\geq\I^\pm_ku(x_0)
\end{equation}
and the sub/superadditivity properties of the operators $\I^\pm_k$, we still obtain some results concerning  the existence of nontrivial supersolutions of \eqref{2912eq1}  with respect to $p$.

Adopting the terminology used in \cite{GuSch}, condition \eqref{well} can be rephrased by saying that the operators $\I^\pm_k$ satisfy the \emph{global comparison property}.

\smallskip

Hencefoth we shall assume that the operators $\I^\pm_k$ act on functions belonging to the space $\mathcal S$, see Definition \ref{S} and Remark \ref{RemarkS}, in such a way $\I^\pm_ku(x_0)$ is well defined as soon as $u$ is twice differentiable around $x_0$.

\medskip

In order to describe our results, we first consider the case $p>0$.

\medskip
\noindent
 As far as the operator $\I^+_k$ is concerned, we shall use the \emph{ fundamental solution} $w_{\bar\gamma}(x)=|x|^{-\bar\gamma}$, whose existence was proved in \cite[Proposition 3.7]{BGT}, and the subadditivity property of $\I^+_k$, to build a strict subsolution $\psi$ of $\I^+_k\psi=0$, in the complement of a large ball, which behaves at infinity like $-\frac{\partial}{\partial x_N}w_{\bar\gamma}$.  The availability of such subsolution allows us to adapt, to our nonlocal and degenerate context, the rescaled-type test function argument used in \cite{L} for uniformly elliptic inequalities involving the minimal Pucci's operators and then applied also in \cite{BGL} to the case of truncated Laplacians.  
The number $\bar\gamma=\bar\gamma(k,s)\in(0,1)$, which is the unique exponent  for which $w_\gamma(x)=|x|^{-\gamma}$ is a radial solution of $\I^+_kw_\gamma(x)=0$ in $\RN\backslash\left\{0\right\}$ exists if, and only if, $k=1$ and $s\in(0,\frac12)$ or  $k\geq2$ and $s\in(0,1)$. As a consequence we obtain the following Liouville type theorem.

\begin{theorem}\label{th3}
Let $u\in LSC(\RN_+)\cap\mathcal S$ be a nonnegative viscosity supersolution of 
\begin{equation}\label{2210peq1}
\left\{\begin{array}{rl}
\I_k^+ u(x)+u^p(x)=0 & \text{in\; $\mathbb R^N_+$}\\
u=0 & \text{in\; $\RN\backslash\mathbb R^N_+$}
\end{array}\right.
\end{equation} 
where $s\in\left(0,\frac12\right)$ if $k=1$, $s\in(0,1)$ otherwise. If  $0<p<1+\frac{2s}{\bar{\gamma}+1}$, 
then $u(x)\equiv0$ in $\R^N_+$.
\end{theorem}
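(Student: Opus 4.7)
The plan is to argue by contradiction. Suppose $u\not\equiv 0$ is a nonnegative viscosity supersolution of \eqref{2210peq1}; the strategy is to construct a one-parameter family of strict subsolutions of $\I_k^+ v=0$, compare them with $u$ via the global comparison property \eqref{well}, and produce a lower bound on $u$ that blows up as the parameter diverges, contradicting $u\in LSC(\R^N_+)$ precisely in the regime $0<p<1+2s/(\bar\gamma+1)$.

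The first and most delicate step is the construction of the subsolution. The building block is the radial fundamental solution $w_{\bar\gamma}(x)=|x|^{-\bar\gamma}$ from \cite[Proposition 3.7]{BGT}, which satisfies $\I_k^+ w_{\bar\gamma}=0$ in $\R^N\setminus\{0\}$. Since differentiation does not commute with the nonlinear sup defining $\I_k^+$, the formal candidate $-\partial_N w_{\bar\gamma}$ cannot be used as a subsolution directly; instead, for small $h>0$ I would introduce the dipole
\begin{equation*}
\psi_h(x)=w_{\bar\gamma}(x-he_N)-w_{\bar\gamma}(x+he_N),
\end{equation*}
which is nonnegative on $\R^N_+$ and vanishes on $\{x_N=0\}$. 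Rewriting $w_{\bar\gamma}(\cdot-he_N)=\psi_h+w_{\bar\gamma}(\cdot+he_N)$ and applying the subadditivity of $\I_k^+$ together with its translation invariance yields $\I_k^+\psi_h(x)\geq 0$ at every $x\neq\pm he_N$. A Taylor expansion gives the asymptotic profile $\psi_h(x)=2h\bar\gamma\, x_N|x|^{-\bar\gamma-2}+O(|x|^{-\bar\gamma-3})$ as $|x|\to\infty$, and a suitable additive perturbation upgrades $\psi_h$ to a \emph{strict} subsolution $\psi$ with $\I_k^+\psi\geq\delta>0$ on a large half-annulus, while preserving this decay.

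For the second step, set $\alpha=2s/(p-1)$ and observe that $u_R(x):=R^\alpha u(Rx)$ is again a nonnegative viscosity supersolution of \eqref{2210peq1}. Following the rescaled test function argument of \cite{L,BGL}, I would apply \eqref{well} on $\{R<|x|<2R\}\cap\R^N_+$ to compare $u_R$ with $\varepsilon\psi$, where $\varepsilon$ is small enough that the strict inequality for $\psi$ absorbs the nonlinear source $(\varepsilon\psi)^p$. Unwinding the scaling gives
\begin{equation*}
u(x)\geq 2\varepsilon h\bar\gamma\, R^{\bar\gamma+1-\alpha}\, x_N|x|^{-\bar\gamma-2}
\end{equation*}
on any fixed compact subset of $\R^N_+$. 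The hypothesis $p<1+2s/(\bar\gamma+1)$ is exactly $\alpha>\bar\gamma+1$, so sending $R\to\infty$ forces $u\equiv+\infty$ on that compact set, contradicting $u\in LSC(\R^N_+)$; hence $u\equiv 0$.

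The main obstacle is the first step: the bare dipole $\psi_h$ is only a weak subsolution, and absorbing the nonlinear source $u^p$ into the comparison demands a uniform strict inequality $\I_k^+\psi\geq\delta>0$ on a region whose size grows with $R$; achieving this while simultaneously preserving the sharp asymptotic decay $x_N|x|^{-\bar\gamma-2}$, which is what generates the critical exponent $1+2s/(\bar\gamma+1)$, is the crux of the argument, and it is also where the restriction to $k=1,\ s\in(0,\tfrac12)$ or $k\geq 2,\ s\in(0,1)$ enters, since these are the ranges for which the fundamental solution $w_{\bar\gamma}$ from \cite{BGT} is available.
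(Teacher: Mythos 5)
Your proposal shares the same high-level architecture as the paper's proof (construct a subsolution built from the fundamental solution $w_{\bar\gamma}$, use it to obtain a decay lower bound on $u$ near infinity, then derive a contradiction via a rescaled cutoff argument), and your dipole $\psi_h(x)=w_{\bar\gamma}(x-he_N)-w_{\bar\gamma}(x+he_N)$ is an elegant alternative to the paper's construction $\psi=-\frac{1}{\bar\gamma}(D_{x_N}v_{\bar\gamma}+D_{x_N}v_\gamma)$: the subadditivity inequality $\I^+_k w_{\bar\gamma}(\cdot-he_N)\leq\I^+_k\psi_h+\I^+_k w_{\bar\gamma}(\cdot+he_N)$ combined with translation invariance does give $\I^+_k\psi_h\geq 0$ away from $\pm he_N$, avoiding the paper's need for the differentiation Lemma~\ref{lem1}. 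However, $\psi_h$ blows up at $he_N\in\R^N_+$, so it cannot be compared with a bounded supersolution there; the paper's construction resolves this by replacing $|x|^{-\gamma}$ with the $C^3$ modification $v_\gamma$ near the origin, and you would need an analogous smoothing. More importantly, the upgrade to a \emph{strict} subsolution is not an ``additive perturbation'' that one can wave at: in the paper it is achieved by adding $-\tfrac1{\bar\gamma}D_{x_N}v_\gamma$ for a fixed $\gamma\in(\bar\gamma,1)$, and the strict positivity only arrives after the delicate estimate \eqref{10mageq5}, which requires controlling the error term coming from the smoothing near the origin and exploiting $c_k(\gamma)>0$. That work is the real content of Proposition~\ref{propSubsol} and cannot be omitted.

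There are two genuine gaps in your second step. First, the sign of the exponent is wrong: from $u_R\geq\varepsilon\psi$ you extract $u(y)\geq C R^{\bar\gamma+1-\alpha}y_N|y|^{-\bar\gamma-2}$, and since the hypothesis $p<1+\tfrac{2s}{\bar\gamma+1}$ is $\alpha>\bar\gamma+1$, the factor $R^{\bar\gamma+1-\alpha}\to 0$ as $R\to\infty$; the lower bound degenerates rather than forcing $u\equiv+\infty$, so no contradiction with $u\in LSC(\R^N_+)$ appears. The correct mechanism is a clash between two rates: the subsolution comparison (Proposition~\ref{decayestimate}) gives the \emph{lower} bound $M_1(R)=\min_{\overline{B_R(5Re_N)}}u\geq cR^{-\bar\gamma-1}$, while the rescaled cutoff-function estimate \eqref{0111peq3} (together with the Harnack-type Proposition~\ref{extended}) gives $1\leq CR^{-2s}M_1(R)^{1-p}$, equivalently an \emph{upper} bound $M_1(R)\lesssim R^{-2s/(p-1)}=R^{-\alpha}$. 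These two are incompatible precisely when $\alpha>\bar\gamma+1$, and the paper organizes the contradiction as $1\leq CR^{(\bar\gamma+1)(p-1)-2s}\to 0$. You also apply the global comparison on an annulus $\{R<|x|<2R\}$ to the rescaled $u_R$, but after unwinding $y=Rx$ this is a statement for $|y|\sim R^2$, not on a fixed compact set as you claim, so the region bookkeeping needs to be redone in any case. Second, your rescaling $u_R=R^{\alpha}u(R\cdot)$ with $\alpha=2s/(p-1)$ only makes sense for $p>1$, so the sublinear range $0<p\leq 1$ — which the theorem also covers — is not addressed; the paper handles it separately in Proposition~\ref{1novprop1} by using the polynomial growth bound from $u\in\mathcal S$ in place of the decay estimate, and that argument needs to be incorporated.
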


The next result complements Theorem \ref{th3} in the range of values of $k$ and $s$ for which $\bar\gamma$ does not exist. It relies on the fact that the function $w(x)=-|x|^{2s-1}$ is a solution, unbounded at infinity, of $\I^+_1w=0$ in $\R^N\backslash\left\{0\right\}$ for any $s\in\left[\frac12,1\right)$. 

\begin{theorem}\label{th4}
Let $u\in LSC(\RN_+)\cap\mathcal S$ be a nonnegative viscosity supersolution of \eqref{2210peq1} with $k=1$ and $s\in\left[\frac12,1\right)$.
If $0<p<\frac{1}{1-s}$, then $u(x)\equiv0$ in $\R^N_+$.
\end{theorem}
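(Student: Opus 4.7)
The plan is to mirror the proof of Theorem \ref{th3}, substituting the fundamental solution $w_{\bar\gamma}(x)=|x|^{-\bar\gamma}$ (unavailable in this parameter range) with the unbounded solution $w(x) = -|x|^{2s-1}$ of $\I^+_1 w = 0$ in $\R^N\setminus\{0\}$. The proof has two main steps: first, constructing a strict subsolution $\psi$ of $\I^+_1 \psi = 0$ in the complement of a large ball, vanishing on $\partial \R^N_+$; then feeding $\psi$ into the rescaled test function argument of \cite{L,BGL}.

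For Step 1, fix $p_0 \in \R^N_+$ with reflection $p_0^* = (p_0', -p_{0,N})$ and set
\[
\psi(x) = w(x-p_0) - w(x-p_0^*) = -|x - p_0|^{2s-1} + |x - p_0^*|^{2s-1}.
\]
For $s \in (1/2,1)$ one checks that $\psi \geq 0$ on $\R^N_+$ (with equality only on $\partial \R^N_+$) and $\psi \leq 0$ outside, while a Taylor expansion yields the asymptotic $\psi(x) \sim 2(2s-1)\, p_{0,N}\, x_N\, |x|^{2s-3}$ as $|x|\to\infty$, reproducing the behavior of $-\partial_{x_N} w$. The subsolution property follows from the subadditivity of $\I^+_1$: writing $\psi + w(\cdot - p_0^*) = w(\cdot - p_0)$,
\[
0 = \I^+_1 w(\cdot - p_0)(x) \leq \I^+_1 \psi(x) + \I^+_1 w(\cdot - p_0^*)(x) = \I^+_1 \psi(x)
\]
at every $x \neq p_0, p_0^*$. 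Upgrading this to a quantitative strict bound of the form $\I^+_1 \psi(x) \gtrsim |x|^{2s-3}$ outside a large ball requires a careful analysis of the sub-optimal gap incurred in the sup defining $\I^+_1$ when using the direction optimal for $w(\cdot - p_0)$ to compute $\I^+_1 w(\cdot - p_0^*)$ at $x$.

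Step 2 then reproduces the rescaled test function strategy of Theorem \ref{th3}. For a putative nontrivial nonnegative supersolution $u$, I would rescale $\psi$ to a family $\psi_R$ and compare $\lambda \psi_R$ with $u$ on $B_R \cap \R^N_+$ via the global comparison property \eqref{well}, sliding $\lambda$. The scaling exponent $2s-2 = -2(1-s)$ of $\psi$ at infinity, combined with the hypothesis $p<\frac{1}{1-s}$, renders the contribution of the power nonlinearity $u^p$ subcritical at large scales, forcing $u\equiv 0$. The main obstacle is the quantitative strictness in Step 1; additionally, the degenerate case $s=1/2$ (in which $w$ collapses to a constant and the above construction trivializes) will require separate treatment, most naturally by a limit procedure $s\to (1/2)^+$ or by switching to a logarithmic analogue of $w$.
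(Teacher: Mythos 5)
Your reflection-based barrier $\psi(x)=-|x-p_0|^{2s-1}+|x-p_0^*|^{2s-1}$ is a genuinely different construction from the paper's (which for $s>\tfrac12$ takes $\psi=-\tfrac{1}{\bar\gamma}\bigl(D_{x_N}v_{-\bar\gamma}+D_{x_N}v_{-\gamma}\bigr)$ with $\gamma<\bar\gamma=2s-1$ and $v_{-\gamma}$ a smooth modification of $-|x|^\gamma$). The two asymptotics match — both behave like $x_N\,|x|^{2s-3}$ at infinity — but the mechanism for obtaining a \emph{quantitative} strict subsolution property is where the two routes diverge, and this is exactly the step you leave unproven.

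The gap is not cosmetic. In the paper, strictness comes cheaply: one adds to the marginal term $-D_{x_N}v_{-\bar\gamma}$ (which merely gives $\I^+_1\geq 0$, since $\hat c(\bar\gamma)=0$) a second piece $-D_{x_N}v_{-\gamma}$ with $\gamma<\bar\gamma$, for which $\hat c(\gamma)>0$ is strictly positive; this yields $\I^+_1\psi\gtrsim x_N|x|^{-(\gamma+2s+2)}$ outright (Proposition \ref{propSubsol3}). In your construction, subadditivity of $\I^+_1$ gives only $\I^+_1\psi\geq 0$. Any strict gain must come from the fact that the direction optimizing $\I_\xi w(\cdot-p_0)(x)$ is sub-optimal for $\I_\xi w(\cdot-p_0^*)(x)$. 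But the angular separation between $\widehat{x-p_0}$ and $\widehat{x-p_0^*}$ decays like $|x|^{-1}$, and the sub-optimality penalty in $\I_\xi w(\cdot-p_0^*)(x)=c(\theta)|x-p_0^*|^{-1}$ near the maximizing direction ($c(0)=0$, $c'(0)=0$) is expected to be of order $\theta^2$, hence of order $|x|^{-3}$. That is strictly \emph{smaller} (for $s>\tfrac12$) than the required rate $x_N|x|^{2s-3}$ on the scale $x_N\sim|x|$, so it is by no means clear that the needed quantitative estimate survives; you would have to show that the gain is in fact $O(\alpha)$ rather than $O(\alpha^2)$ in the angular perturbation $\alpha$, or redesign the barrier. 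You would also need to smooth $\psi$ near $p_0$ (the paper's $v_{-\gamma}$ already handles this) and to verify differentiability under the integral sign in the unbounded setting, which is the content of Lemma \ref{lem1}(ii).

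The case $s=\tfrac12$ is also essentially unaddressed. Your $w$ degenerates to a constant, and neither the limit $s\to(\tfrac12)^+$ nor a logarithmic substitute is carried out. The paper's treatment of $s=\tfrac12$ is structurally different: here $\hat c(\gamma)>0$ for \emph{every} $\gamma\in(0,1)$, so $-\tfrac1\gamma D_{x_N}v_\gamma$ is a strict subsolution for each $\gamma$ (Proposition \ref{propSubsol2}), giving the decay bound $u\geq c\, x_N|x|^{-(\gamma+2)}$ for all $\gamma\in(0,1)$; the constraint $p<2$ is then obtained by letting $\gamma\to0^+$ \emph{in the final estimate}, not in the barrier.

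In short: Step 2 (the rescaled test function argument, using \eqref{0111peq3}) is correctly envisioned and matches the paper, but Step 1 has a genuine, non-trivial gap — the quantitative strictness of the subsolution inequality is not established, and the scaling heuristics suggest your reflection construction may not produce it at the needed rate — and the $s=\tfrac12$ subcase requires a separate argument that you have not supplied.
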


Differently from more standard elliptic integro-differential operators, new phenomena arises for $\I_k^-$ and $k<N$. In these case,  problem \eqref{2912eq1} admits in fact nontrivial supersolutions for any $p>0$. In the next theorem we prove the existence of supersolutions which are not monotone with respect the $x_N$-variable.

\begin{theorem}\label{th1}
Let $k<N$ be a positive integer. Then for any $p>0$ there exists a nonnegative viscosity supersolution $u\in C^{0,s}(\RN)\cap L^\infty(\RN)$  of 
$$
\I_k^- u(x)+u^p(x)=0 \quad\, \text{in\; $\mathbb R^N_+$}
$$
satisfying the following properties:
$$
u(x)=0\quad\forall x\in\RN\backslash\mathbb R^N_+\quad\quad\text{and}\qquad\limsup_{x_N\to+\infty}u(x',x_N)>0\quad\text{uniformly w.r.t. $x'\in\mathbb R^{N-1}$}.
$$
\end{theorem}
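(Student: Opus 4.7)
I would look for a supersolution of the simple form $u(x)=v(x_N)$, with $v:\R\to[0,\infty)$ H\"older continuous, bounded, and supported in $[0,\infty)$; since $v$ is independent of $x'$ the uniformity of the $\limsup$ in $x'$ comes for free. The substitution $\tau\mapsto \tau/|\xi_N|$ in \eqref{eq1} gives, for every unit vector $\xi$,
$$\I_\xi u(x)=|\xi_N|^{2s}Lv(x_N),\qquad Lv(t):=C_s\int_0^\infty\frac{v(t+\tau)+v(t-\tau)-2v(t)}{\tau^{1+2s}}\,d\tau.$$
For an orthonormal $k$-tuple $\{\xi_i\}$ the weights $a_i:=|\xi_{i,N}|^2\in[0,1]$ satisfy $\sum_i a_i\le 1$ (Bessel); conversely, any such weight vector is realized by an orthonormal tuple whenever $k\le N-1$, since writing $\xi_i=(v_i,\sqrt{a_i})$ with $v_i\in\R^{N-1}$ the required Gram matrix $I-\sqrt{a}\,\sqrt{a}^{\,T}$ is positive semi-definite of rank $\le k-1\le N-2$. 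By concavity of $t\mapsto t^s$, $\sup\sum_i a_i^s=k^{1-s}$ (attained at $a_i=1/k$) and $\inf\sum_i a_i^s=0$, whence $\I_k^-u(x)=k^{1-s}Lv(x_N)$ if $Lv(x_N)\le 0$ and $\I_k^-u(x)=0$ (infimum taken by $\xi_i\perp e_N$) if $Lv(x_N)\ge 0$. Therefore $\I_k^-u+u^p\le 0$ is automatic wherever $v(x_N)=0$---in the viscosity sense, any test function touching $u$ from below at such a point is $\le 0$ along $\{y_N=x_N\}$, so testing with $\xi_i\perp e_N$ yields $\sum_i\I_{\xi_i}\le 0$---and reduces to the one-dimensional inequality $Lv(t)\le -k^{s-1}v(t)^p$ wherever $v(t)>0$.

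\textbf{Construction of }$v$\textbf{ as a train of Getoor bumps.} Let $\Phi(t):=(1-t^2)_+^s$, the one-dimensional Getoor function, which satisfies the classical identity $L\Phi(t)=-\lambda_s$ on $(-1,1)$ with an explicit $\lambda_s>0$. For parameters $A,B>0$ and $M>2B$ to be chosen, define
$$v(t):=A\sum_{n=1}^{\infty}\Phi\!\left(\frac{t-nM}{B}\right),\qquad t\in\R.$$
The bumps have pairwise disjoint supports $[nM-B,nM+B]\subset(0,\infty)$, so $v\in C^{0,s}(\R)\cap L^\infty(\R)$ with $\|v\|_\infty=A$, $v\equiv 0$ on $(-\infty,0]$, and $\limsup_{t\to+\infty}v(t)=A>0$ (attained at each peak $t=nM$).

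\textbf{Verification and principal obstacle.} On the $n$-th bump, scaling of the Getoor identity gives $L[A\Phi((\cdot-nM)/B)](t)=-A\lambda_sB^{-2s}$. The cross contributions $L[A\Phi((\cdot-mM)/B)](t)$ for $m\neq n$ are nonnegative (as $\Phi\geq 0$ vanishes outside $(-1,1)$) and, using the $L^\infty$-bound $A$ on the bump of length $2B$ at distance $\ge|n-m|M-B$ from $t$, are dominated by $C_s\,AB/(|n-m|M-B)^{1+2s}$. Summation yields a global cross-term bound of order $AB/M^{1+2s}$, which for $M$ sufficiently large relative to $B$ is smaller than $\tfrac12 A\lambda_s B^{-2s}$, hence $Lv(t)\le -\tfrac12 A\lambda_s B^{-2s}$ throughout each bump. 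Since $v\le A$, the desired inequality $k^{1-s}Lv\le -v^p$ follows from
$$A^{1-p}\ge\frac{2B^{2s}}{k^{1-s}\lambda_s},$$
which can be enforced for \emph{any} $p>0$ by taking $A$ large when $p<1$, $A$ small when $p>1$, and (in the borderline $p=1$) first shrinking $B$ so that $k^{1-s}\lambda_s\ge 2B^{2s}$. The main difficulty is the simultaneous calibration of the three parameters $A,B,M$: one must balance the single-bump scale $B$, the amplitude $A$ needed to absorb the $v^p$ term, and the spacing $M$ needed to control the nonlocal interactions between bumps. The Getoor identity $L\Phi=-\lambda_s$ on $(-1,1)$ is the key tool supplying a sharp, uniform local bound on each bump, without which the long-range nature of $\I_k^-$ would prevent pointwise control.
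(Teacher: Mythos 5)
Your proof is correct and uses essentially the same construction as the paper: a train of pairwise-disjoint Getoor-type bumps $(\cdot)_+^s$ in the $x_N$-direction, with the $(-\Delta)^s$-constancy of each bump supplying the dominant negative contribution in its interior, the cross-terms controlled by the spacing, and the non-$C^2$ points on the bump boundaries (where $v=0$) handled by the same direct viscosity test with directions $\xi_i\perp e_N$. The only cosmetic differences are that the paper ties your three parameters to a single scale $\varepsilon$ (half-width $\varepsilon$, amplitude $\varepsilon^{2s}$, spacing $1$, then $\varepsilon\to0^+$, which handles every $p>0$ uniformly), and that instead of computing the exact infimum $\I^-_ku=k^{1-s}Lv$ it simply bounds $\I^-_ku\le\I_{e_N}u=Lv$ using the frame $\{e_{N-k+1},\dots,e_N\}$ — a bound that suffices and sidesteps the attainment argument (your stated rank bound $\le k-1$ for $I-\sqrt a\,\sqrt a^{\,T}$ is off when $\sum a_i<1$, where the rank is $k$, though $k\le N-1$ still saves the realizability claim).
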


As far as $\I^-_N$ is concerned, we first point out that, differently from $\I^-_k$ with $k<N$, the operator $\I_N^-$ admits the fundamental solution $w_{\tilde\gamma}(x)=|x|^{-\tilde\gamma}$, where now $\tilde\gamma>1$ (if $N\geq3$) and converges to $N-2$ as $s\to1^-$, see \cite[Remark 4.9 and Lemma 6.2]{BGT} for details. \\
The nonexistence of positive (in $\R^N_+$) supersolutions to \eqref{2912eq1} in the sublinear case $0<p\leq1$ will be obtained in a similar way as for $\I^+_k$.

\begin{theorem}\label{INsublinear}
Let $u\in LSC(\RN_+)\cap\mathcal S$ be a nonnegative viscosity supersolution of 
\begin{equation*}
\left\{\begin{array}{rl}
\I_N^- u(x)+u^p(x)=0 & \text{in\; $\mathbb R^N_+$}\\
u=0 & \text{in\; $\RN\backslash\mathbb R^N_+$}.
\end{array}\right.
\end{equation*} 
If  $0<p\leq1$, 
then $u(x)\equiv 0$ in $\RN_+$.
\end{theorem}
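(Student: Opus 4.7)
The strategy parallels that of Theorem \ref{th3}, with $\I^+_k$ replaced by $\I^-_N$, the fundamental solution $w_{\bar\gamma}$ replaced by $w_{\tilde\gamma}(x)=|x|^{-\tilde\gamma}$, and the subadditivity of $\I^+_k$ replaced by the superadditivity
$$\I^-_N(u+v)(x)\ge\I^-_N u(x)+\I^-_N v(x)$$
of $\I^-_N$ (which is an infimum of linear operators).

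The first task is to produce a function $\psi\in\mathcal{S}$ which is strictly positive in $\R^N_+$, vanishes on $\R^N\setminus\R^N_+$, decays at infinity like $-\partial_{x_N}w_{\tilde\gamma}(x)=\tilde\gamma\,x_N|x|^{-\tilde\gamma-2}$, and is a \emph{strict} subsolution $\I^-_N\psi\ge\varepsilon>0$ outside a sufficiently large ball $B_R$. A natural building block is an antisymmetrised translate of $w_{\tilde\gamma}$, such as
$$
\psi_0(x)=\bigl[w_{\tilde\gamma}(x-\rho e_N)-w_{\tilde\gamma}(x+\rho e_N)\bigr]\chi_{\{x_N>0\}}
$$
for some fixed $\rho>0$, which at infinity behaves like $2\rho$ times $-\partial_{x_N}w_{\tilde\gamma}$. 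Using $\I^-_N w_{\tilde\gamma}\equiv 0$ on $\R^N\setminus\{0\}$, the superadditivity of $\I^-_N$, and a careful estimate of the error terms introduced both by the truncation of $\psi_0$ across $\{x_N=0\}$ and by the nonlinear residue $\I^-_N(u+v)-\I^-_N u-\I^-_N v$, one obtains the desired $\I^-_N\psi\ge\varepsilon>0$ in $\R^N\setminus B_R$ after a smoothing/cut-off correction.

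Assume next, by contradiction, that $u\not\equiv 0$ is a nonnegative viscosity supersolution. Following \cite{L} and \cite{BGL}, introduce the rescaled family $\psi_\lambda(x)=\lambda^{-(\tilde\gamma+1)}\psi(x/\lambda)$ for $\lambda>1$. By the $2s$-homogeneity of $\I^-_N$, each $\psi_\lambda$ remains a strict subsolution of $\I^-_N\psi_\lambda=0$ on $\R^N\setminus B_{R\lambda}$, with a strict-subsolution constant of order $\lambda^{-(\tilde\gamma+1+2s)}\varepsilon$. Since $u$ is positive at some interior point while $\psi_\lambda\to 0$ uniformly at infinity, the largest $t_\lambda>0$ for which $t_\lambda\psi_\lambda\le u$ on $\R^N$ is finite and attained at some contact point $x_\lambda\in\R^N_+$. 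Applying the global comparison property \eqref{well} at $x_\lambda$ gives
$$
0\ge\I^-_N u(x_\lambda)+u^p(x_\lambda)\ge t_\lambda\,\I^-_N\psi_\lambda(x_\lambda)+\bigl(t_\lambda\psi_\lambda(x_\lambda)\bigr)^p,
$$
and a balance argument on $\lambda$ and $t_\lambda$, exploiting that in the sublinear regime $p\le 1$ the term $t_\lambda^p$ dominates $t_\lambda$ as $t_\lambda\to 0$, produces the required contradiction. This forces $u\equiv 0$ in $\R^N_+$.

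The main obstacle is the construction of the strict subsolution $\psi$: since the superadditivity inequality is only one-sided, one must control the defect $\I^-_N(u+v)-\I^-_N u-\I^-_N v$ as well as the nonlocal contributions generated by cutting $\psi_0$ off on $\{x_N\le 0\}$, and then propagate the resulting strict positivity uniformly to infinity. Once $\psi$ is in hand, the rescaling and contact-point argument runs essentially as in Theorem \ref{th3}, the interplay of the exponents $\tilde\gamma+1$, $2s$ and $p\le 1$ closing the contradiction.
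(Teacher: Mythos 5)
Your proposal takes the superlinear‑range strategy (build a strict half‑space subsolution $\psi$ decaying like $x_N|x|^{-\tilde\gamma-2}$, then rescale and compare) and tries to transplant it to $\I^-_N$. This is precisely the route the paper states is \emph{not} available for $\I^-_N$: the construction in Proposition~\ref{propSubsol} relies on the subadditivity of the supremum operator $\I^+_k$ to pass from a single favourable frame $\{\xi_i\}$ to a lower bound on $\I^+_k\psi$, and the paper explicitly says in the Introduction that, because $\I^-_N$ lacks subadditivity, one cannot conclude that $-\partial_{x_N}w_{\tilde\gamma}$ (or a modification of it) satisfies $\I^-_N\psi>0$; this is exactly why no superlinear Liouville theorem is stated for $\I^-_N$.

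Your appeal to superadditivity $\I^-_N(u+v)\ge\I^-_N u+\I^-_N v$ does not save the construction, because the estimate goes the wrong way. For your building block $\psi_0=w_{\tilde\gamma}(x-\rho e_N)-w_{\tilde\gamma}(x+\rho e_N)$, superadditivity yields
\[
\I^-_N\psi_0 \ge \I^-_N w_{\tilde\gamma}(\cdot-\rho e_N)+\I^-_N\bigl(-w_{\tilde\gamma}(\cdot+\rho e_N)\bigr)
= 0 - \I^+_N w_{\tilde\gamma}(\cdot+\rho e_N),
\]
and $\I^+_N w_{\tilde\gamma}\ge\I^-_N w_{\tilde\gamma}=0$, with strict inequality generically, so the right‑hand side is $\le 0$. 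You therefore get no strict positivity, and cutting off across $\{x_N=0\}$ only adds further negative contributions. In short, lower‑bounding an infimum operator requires controlling \emph{every} frame, and your scheme controls only one.

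More to the point, none of this machinery is needed in the sublinear regime. The paper's proof (Proposition~\ref{1novprop1}) for $0<p\le1$ is elementary: apply the strong minimum principle to get $u>0$ in $\R^N_+$; combine the rescaled cut‑off estimate \eqref{0111peq3} (which only requires the radial comparison function of Proposition~\ref{w} and the weak Harnack bound of Proposition~\ref{extended}, both of which do work for $\I^-_N$ since $\I^-_N$ of a \emph{radial} function is explicitly computable) with the growth bound $M_1(R)\le C(5R)^\alpha$ coming for free from $u\in\mathcal S$ with $\alpha<2s$; conclude $1\le c\,R^{\alpha(1-p)-2s}$, a contradiction as $R\to\infty$ because $\alpha(1-p)\le\alpha<2s$. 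No fundamental solution or decay estimate of the form $u\gtrsim x_N|x|^{-\tilde\gamma-2}$ is required. You should separate the sublinear and superlinear mechanisms: the fundamental‑solution argument is what lets the exponent climb past $1$ for $\I^+_k$, but it is neither available nor needed here.
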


The main difference between $\I_N^-$ and $\I^+_k$ occurs in the superlinear range $p>1$. Because of the lack of the subadditivity property for $\I_N^-$   we cannot infer that the function  $\psi(x)=-\frac{\partial w_{\tilde\gamma}}{\partial x_N}$, or a modification of it near the origin, is still a solution of $\I_N^-\psi>0$ in $\R^N_+$. Hence a corresponding Liouville theorem is missing in this case.
 Nevertheless, using the fact that $\I_N^-$ is superadditive, we shall prove that the range of $p$'s  for which \eqref{2912eq1} admits positive  supersolutions is in fact strictly larger than  $(1+\frac{2s}{\tilde\gamma},\,+\infty)$, which is the corresponding range of existence of  positive  entire supersolution , see \cite[Theorem 4.1]{BGT}.

\begin{theorem}\label{thIN}
There is a number $\gamma^+\in(\tilde\gamma,+\infty)$ such that for any $p>1+\frac{2s}{\gamma^+}$ the equation
$$
\I^-_Nu(x)+u^p(x)=0\quad\text{in}\quad\RN_+
$$
admits a nonnegative supersolution $u\in C(\RN)\cap L^\infty(\RN)$ satisfying the following properties:
$$
u(x)=0\quad\forall x\in\RN\backslash\RN_+\,,\quad u(x)>0\quad\forall x\in\RN_+\,,\quad\lim_{|x|\to+\infty}u(x)=0.
$$
\end{theorem}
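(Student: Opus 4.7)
The strategy is first to build a homogeneous supersolution and then absorb the nonlinearity by scaling. Specifically I would construct a bounded, continuous, nonnegative function $\psi$ on $\RN$, strictly positive in $\RN_+$, vanishing outside it, decaying as $|x|^{-\gamma^+}$ at infinity for some $\gamma^+ > \tilde\gamma$, and satisfying $\I^-_N\psi(x) \leq -c(1+|x|)^{-\gamma^+-2s}$ in $\RN_+$. Once such a $\psi$ is available, $u = \epsilon\psi$ with $\epsilon>0$ small is a viscosity supersolution of \eqref{2912eq1} for every $p > 1 + 2s/\gamma^+$: by positive $1$-homogeneity of $\I^-_N$, $\I^-_Nu = \epsilon\,\I^-_N\psi$ retains the strict negative decay, while $u^p \leq \epsilon^p C(1+|x|)^{-p\gamma^+}$ decays faster than $\I^-_Nu$ at infinity (the very meaning of $p\gamma^+ > \gamma^++2s$), and the compact region is controlled by the extra factor $\epsilon^{p-1}$.

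To build $\psi$, I would use the ansatz $\psi_\gamma(x) = h(x_N)(1+|x|^2)^{-(\gamma+1)/2}$, where $h \in C^2(\R)$ is nonnegative, $h \equiv 0$ on $(-\infty,0]$, and $h(t) = t$ for $t \geq 1$; then $\psi_\gamma \asymp |x|^{-\gamma}$ inside every cone $\{x_N\geq\delta|x|\} \cap \RN_+$. Since $\I^-_N$ is superadditive but \emph{not} subadditive, the usual trick of differentiating the fundamental solution $w_{\tilde\gamma}$ does not produce a supersolution; instead I would estimate $\I^-_N\psi_\gamma$ from above by evaluating on the canonical frame $\{e_1,\ldots,e_N\}\in\mathcal{V}_N$:
\[
\I^-_N\psi_\gamma(x) \leq \sum_{i=1}^{N-1} h(x_N)\,\I_{e_i}[(1+|\cdot|^2)^{-(\gamma+1)/2}](x) \;+\; \I_{e_N}\psi_\gamma(x),
\]
in which the first $N-1$ terms are one-dimensional fractional Laplacians of a smooth radial profile (with explicit Mellin asymptotics), while the last term carries the asymmetry of $h$ across $\{x_N = 0\}$ and produces the extra strictly negative contribution.

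The critical exponent is then defined as $\gamma^+ := \sup\{\gamma > 0 : \I^-_N\psi_\gamma \leq 0 \text{ in } \RN_+\}$, and the key assertion is $\gamma^+ > \tilde\gamma$. Expanding the right-hand side above as $|x| \to \infty$ along a fixed ray in $\RN_+$, the radial bulk contribution factorizes as $A_{\mathrm{bulk}}(\gamma)\,|x|^{-\gamma-2s}$ with $A_{\mathrm{bulk}}(\tilde\gamma) = 0$ (by the defining identity $\I^-_N w_{\tilde\gamma} \equiv 0$), while the $e_N$-term contributes $-A_{\mathrm{bdry}}(\gamma)\,|x|^{-\gamma-2s}$ with $A_{\mathrm{bdry}}(\tilde\gamma) > 0$, because the integrand $\psi_\gamma(x+\tau e_N) + \psi_\gamma(x-\tau e_N) - 2\psi_\gamma(x)$ loses a positive quantity each time $x - \tau e_N$ exits $\RN_+$. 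Continuity in $\gamma$ then produces an open interval $[\tilde\gamma, \gamma^+)$ of admissible $\gamma$'s, proving $\gamma^+ > \tilde\gamma$. The hardest step is to upgrade this asymptotic strict sign to a uniform quantitative bound on all of $\RN_+$: near $\{x_N = 0\}$ the factor $h(x_N)$ is small and competes with the boundary jump, and near the origin a smoothing of $\psi_\gamma$ is needed to make $\I^-_N\psi_\gamma$ pointwise defined. Both issues can be handled by mollifying $h$ and using the global comparison property \eqref{well} against suitable radial barriers on compact regions.
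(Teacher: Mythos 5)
The proposal has a structural flaw that makes the construction fail near the boundary $\{x_N=0\}$. You choose $h\in C^2(\R)$ with $h\equiv 0$ on $(-\infty,0]$, so $\psi_\gamma$ is a $C^2$, nonnegative function that vanishes on $\R^N\setminus\R^N_+$ and is positive in $\R^N_+$. But any such function cannot satisfy $\I^-_N\psi_\gamma\leq 0$ near the boundary. Indeed, at a boundary point $(x',0)$ and for any orthonormal frame $\{\xi_i\}$, each $\I_{\xi_i}\psi_\gamma(x',0)\geq 0$ because $\psi_\gamma\geq 0=\psi_\gamma(x',0)$, and the direction with $\langle\xi_i,e_N\rangle\neq 0$ gives a strictly positive contribution since $\psi_\gamma>0$ in $\R^N_+$; therefore $\I^-_N\psi_\gamma(x',0)>0$. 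By continuity of $y\mapsto\I^-_N\psi_\gamma(y)$ (which is legitimate since $\psi_\gamma\in C^2$ and bounded), $\I^-_N\psi_\gamma>0$ on a neighborhood of the boundary inside $\R^N_+$, contradicting the supersolution property there. Mollifying $h$ does not help — it preserves (or increases) regularity, and the obstruction is precisely that $\psi_\gamma$ is too regular across $\{x_N=0\}$. As a consequence your set $\{\gamma>0:\I^-_N\psi_\gamma\leq 0\ \text{in}\ \R^N_+\}$ is empty and the proposed definition of $\gamma^+$ is vacuous. The paper sidesteps this by deliberately working with two ingredients that are \emph{not} $C^2$ at the boundary: the discontinuous truncation $u_\gamma=v_\gamma\cdot\mathbf{1}_{\R^N_+}$, translated so that it is positive in $\R^N_+$, and the fractional corner profile $z(x)=(x_N)_+^\mu$ with $\mu<s$, for which $\I_{e_N}z(x)=C_sc_{s,\mu}x_N^{\mu-2s}\to-\infty$ as $x_N\to 0^+$ (Proposition \ref{0801prop1}/Remark \ref{T49-5}). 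The final supersolution is $\varepsilon\min\{\phi,z\}$: the corner of $z$ supplies the needed strictly negative, blowing-up term near the boundary, $\phi$ supplies the decay at infinity, and the minimum is continuous.

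A second, less fatal, point: your canonical frame $\{e_1,\ldots,e_N\}$ does not exploit the truncation efficiently. Only $e_N$ can exit $\R^N_+$, at scale $\tau\sim x_N$, which degenerates as $x$ flattens against the boundary, so the resulting ``loss'' term is not uniformly comparable to $|x|^{-\gamma-2s}$. The paper's choice of the rotated frame with $\hat x=\frac1{\sqrt N}(\xi_1+\cdots+\xi_N)$ guarantees that at least one direction $\xi_{\bar i}$ exits $\R^N_+$ at a time $\tau_{\bar i}\leq\sqrt N\,|x|$, yielding a loss bounded below by $|x|^{-\gamma-2s}\int_{\sqrt N}^{\infty}(1+\tau^2-\tfrac{2}{\sqrt N}\tau)^{-\gamma/2}\tau^{-1-2s}\,d\tau$ uniformly over $x$. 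This is what makes $c_N^+(\gamma)<Nc(\gamma)$ with a uniform gap and produces the explicit $\gamma^+>\tilde\gamma$ (Lemma \ref{T49-1} and Proposition \ref{T49-2}).
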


\medskip

As far as the singular case $p\leq0$ is concerned, we first show that under suitable growth assumption at infinity for $u$, then \eqref{2912eq1} has no positive supersolution. 

\begin{theorem}\label{thmsingular}
Let $p\leq0$. Let $\mathcal I$ be any of $\I^\pm_k$ with $k=1,\ldots,N$. Then the problem
\begin{equation}\label{2810eq1}
\begin{cases}
u\in LSC(\R^N_+)\cap\mathcal S\\
u\geq0 & \text{in $\R^N\backslash\R^N_+$}\\
0<u(x)\leq C(1+|x|)^\alpha & \text{for $x\in\R^N_+$}\\
\mathcal Iu+u^p\leq0 & \text{in $\R^N_+$ (in the viscosity sense),}
\end{cases}
\end{equation}
where $C>0$ and $0<\alpha<\frac{2s}{1-p}$ are constants, has no solution.
\end{theorem}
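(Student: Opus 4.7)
I argue by contradiction. Assume a solution $u$ of \eqref{2810eq1} exists; the strategy is to rescale $u$ so as to amplify the nonlinear term, then apply the viscosity supersolution inequality at an interior minimum on a fixed ball. For $R \geq 1$, set $u_R(y) := R^{-\alpha}u(Ry)$. Using $\I_\xi(u(R\,\cdot\,))(y) = R^{2s}\I_\xi u(Ry)$ and $u^p(Ry) = R^{\alpha p}u_R^p(y)$, one checks that $u_R$ is a viscosity supersolution of $\I u_R + R^\beta u_R^p \leq 0$ in $\R^N_+$ with $\beta := 2s - \alpha(1-p) > 0$, while $u_R(y) \leq C(1+|y|)^\alpha$ on $\R^N$, uniformly in $R \geq 1$. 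The key point is that $R^\beta \to \infty$ as $R \to \infty$.

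Fix the ball $B := B_1(e_N)$, so that $\overline B \subset \R^N_+$; then $u_R \leq M$ on $\overline B$ uniformly in $R$. Choose a cutoff $\chi \in C^\infty_c(B)$ with $\chi(e_N) = 1$ and $0 \leq \chi \leq 1$. The LSC function $u_R - (M+1)\chi$ is nonnegative on $\partial B$ and at most $-1$ at $e_N$, hence attains its minimum at an interior point $y_R^* \in B$, with $0 < u_R(y_R^*) \leq M$ and
\[
u_R(y) \geq u_R(y_R^*) + (M+1)\bigl(\chi(y) - \chi(y_R^*)\bigr) \quad \text{for every } y \in B.
\]
Pick $r_0 > 0$ with $B_{r_0}(y_R^*) \subset B$ and define
\[
\phi_R(y) := \begin{cases} u_R(y_R^*) + (M+1)\bigl(\chi(y) - \chi(y_R^*)\bigr) & \text{if } y \in B_{r_0}(y_R^*), \\ u_R(y) & \text{if } y \in \R^N \setminus B_{r_0}(y_R^*). \end{cases}
\]
Then $\phi_R \leq u_R$ on $\R^N$ with equality at $y_R^*$ and $\phi_R$ is $C^2$ near $y_R^*$; the viscosity supersolution property yields
\[
\I\phi_R(y_R^*) + R^\beta u_R^p(y_R^*) \leq 0.
\]

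Splitting each one-dimensional integral $\I_\xi\phi_R(y_R^*)$ at $\tau = r_0$: the near part involves only the $C^2$-increment of $\chi$, producing an $O(r_0^{2-2s})$ bound; the tail uses $u_R \geq 0$ together with $u_R(y) \leq C(1+|y|)^\alpha$ and $\alpha < 2s$ (guaranteed by $\alpha < 2s/(1-p)$ and $p \leq 0$), so the integrals $\int_{r_0}^\infty u_R(y_R^* \pm \tau\xi)\tau^{-1-2s}\,d\tau$ converge and are uniformly controlled; the only term involving $u_R(y_R^*)$ is $-2u_R(y_R^*)\int_{r_0}^\infty\tau^{-1-2s}d\tau \geq -M/(s r_0^{2s})$. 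Hence $\I_\xi\phi_R(y_R^*) \geq -L$ for a constant $L$ independent of $R$ and of $\xi$; summing and passing to inf or sup over orthonormal $k$-tuples, $\I_k^{\pm}\phi_R(y_R^*) \geq -kL$. Since $u_R(y_R^*) \leq M$ and $p \leq 0$ imply $u_R^p(y_R^*) \geq M^p > 0$, combining the two inequalities produces
\[
-kL \leq -R^\beta M^p,
\]
which is impossible for $R$ large enough, yielding the contradiction.

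The main technical obstacle is the interplay of the viscosity testing with the nonlocal nature of $\I$: one must construct a test function globally below $u_R$, smooth at the test point, for which $\I\phi_R$ can be pointwise evaluated and bounded from below by a constant independent of $R$. The tail estimate needs $\alpha < 2s$, while the amplification $R^\beta \to \infty$ needs the stronger $\alpha < 2s/(1-p)$; both are encoded in the hypothesis together with $p \leq 0$, and removing either would collapse the argument.
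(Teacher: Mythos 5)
Your proof is correct and follows essentially the same route as the paper's: a radial cut-off touching the supersolution from below, combined with the bound $u^p\geq M^p$ at the contact point (which is where $p\leq0$ enters) and the amplification coming from $\alpha(1-p)-2s<0$. The only difference is presentational: the paper keeps $u$ fixed and uses a cut-off $\zeta=M_1(R)\,\eta(\cdot/R-5e_N)$ at scale $R$, whereas you first rescale $u_R(y)=R^{-\alpha}u(Ry)$ so that the cut-off lives on a fixed ball and the equation acquires the divergent factor $R^{\beta}$; these are the same computation under the change of variables $x=Ry$.

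One small point you should make explicit: for $L$ to be independent of $R$, the patching radius $r_0$ must not depend on $R$, but you write ``pick $r_0>0$ with $B_{r_0}(y_R^*)\subset B$'' without explaining why this is possible uniformly. The fix is already implicit in your setup: since $u_R\geq 0$ and $u_R(y_R^*)-(M+1)\chi(y_R^*)\leq -1$, we must have $\chi(y_R^*)\geq 1/(M+1)$, so $y_R^*$ lies in the compact set $K:=\{\chi\geq 1/(M+1)\}\subset\mathrm{supp}\,\chi\subset B\subset\R^N_+$, which is at positive distance $d_0$ from $\partial\R^N_+$; then $r_0:=d_0$ works for every $R\geq 1$. (Note the relevant inclusion is $B_{r_0}(y_R^*)\subset\R^N_+$, not $\subset B$: outside $B$ one has $\chi=0$, so the patched function is automatically $\leq -1<0\leq u_R$ there.)
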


It is noteworthy that for uniformly elliptic operators (see e.g. \cite[Theorem 5.1]{AS1}-\cite[Proposition 3,2]{M} and \cite[Theorem 1.3 and Corollary 1.1]{NdPQ} for the local and nonlocal case respectively), the nonexistence of positive supersolutions  for $-1\leq p\leq0$ is obtained without further restriction on the growth of $u$, contrary to our assumption in \eqref{2810eq1}. On the other hand, the corresponding proofs heavily rely on quantitative strong comparison principle and weak Harnack inequality, which are not at disposal for our degenerate operators. This partially motivate the use of additional assumptions, naturally arising in the rescaled test function method employed in the proof of Theorem \ref{thmsingular}. We also remark that the growth condition in \eqref{2810eq1} can be replaced with
\begin{equation}\label{2810eq1p}
u(x)=o(|x|^{\frac{2s}{1-p}})\;\quad\text{as $|x|\to\infty$.}
\end{equation}
Positive supersolutions growing at infinity with order ${\frac{2s}{1-p}}$ appear as soon as $p<-1$,  at least for some operators $\mathcal I$ as expressed by the following theorem.

\begin{theorem}\label{th-singular}
Let $\I=\I^-_k$  for $1\leq k\leq N$ or  $\I=\I^+_N$. Then for any $p<-1$ there exists a positive supersolution $u\in C^{0,\frac{2s}{1-p}}(\R^N)\cap C^\infty(\R^N_+)$ of 
$$
\I u(x)+u^p(x)=0\quad\text{in\, $\RN_+$}
$$
such that $u(x)=0$ for any $x\in\R^N\backslash \R^N_+$.
\end{theorem}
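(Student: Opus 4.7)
The plan is to construct the required supersolution explicitly as a pure power of the coordinate $x_N$. Setting
\[
\beta := \frac{2s}{1-p} \in (0, s)
\]
(well-defined and contained in $(0,s)$ since $p<-1$ gives $1-p>2$), I would try the ansatz
\[
u(x) := c\,(x_N)_+^{\beta}
\]
with a positive constant $c$ to be determined. This $u$ is smooth in $\RN_+$, vanishes on $\RN \setminus \RN_+$, is H\"older of exponent $\beta = 2s/(1-p)$ on $\RN$, and grows like $x_N^\beta$ with $\beta < s < 2s$, slow enough that every directional integral $\I_\xi u$ converges absolutely. The choice of $\beta$ is dictated by scaling: the identity $\beta - 2s = p\beta$ guarantees that both $\I u(x)$ and $u^p(x)$ will be proportional to $x_N^{p\beta}$, collapsing the PDE to an algebraic condition on $c$.

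The first computation is that of $\I_\xi u$ on $\RN_+$: since $u$ depends only on $x_N$, the substitution $\tau = x_N t/|\xi_N|$ in the defining integral gives
\[
\I_\xi u(x) \;=\; c\,C_s\,|\xi_N|^{2s}\,\kappa_\beta\,x_N^{\beta-2s}, \qquad \kappa_\beta := \int_0^{\infty}\frac{(1+t)^{\beta} + (1-t)_+^{\beta} - 2}{t^{1+2s}}\,dt
\]
(with $\I_\xi u = 0$ when $\xi_N = 0$). The key ingredient is the sign assertion $\kappa_\beta < 0$ for $\beta\in(0,s)$, equivalent to the strict positivity of the one-dimensional fractional Laplacian $(-\partial_{xx})^s(x_+^\beta)$ on $(0,\infty)$ in that range (the fractionally harmonic exponent being exactly $\beta = s$, for which $\kappa_s = 0$). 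I would establish it by a direct comparison argument: concavity of $r\mapsto r^\beta$ on $[0,\infty)$ makes the integrand pointwise nonpositive on $(0,1)$, while on $(1,\infty)$ the tail is controlled by combining $\kappa_s = 0$ with the pointwise inequality $(1+t)^\beta \leq (1+t)^s$ and the boundary datum $\kappa_0 = -1/(2s)$; alternatively, one may invoke the closed-form Dyda-type expression for the fractional Laplacian of $x_+^\beta$.

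With $\kappa_\beta<0$ established, the extremal problem for orthonormal families decouples cleanly. For $\I_k^-$ the negative factor $\kappa_\beta$ means the infimum is attained by \emph{maximizing} $\sum_{i=1}^k|\xi_{i,N}|^{2s}$ over $\{\xi_i\}\in\mathcal{V}_k$; by Jensen's inequality under the constraint $\sum_i \xi_{i,N}^2 \leq 1$, the maximum equals $k^{1-s}$ and is achieved by any orthonormal $k$-frame with $\xi_{i,N}^2 = 1/k$, whose perpendicular components form a regular simplex in $\R^{N-1}$ (possible since $k\leq N$). For $\I_N^+$ the supremum is taken over full orthonormal bases of $\RN$, where $\sum_i \xi_{i,N}^2 = 1$; this forces \emph{minimizing} $\sum|\xi_{i,N}|^{2s}$, and sub-additivity of $r\mapsto r^s$ on $[0,\infty)$ yields minimum $1$, attained by any basis containing $e_N$. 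In either case
\[
\I u(x) \;=\; \Lambda\,c\,C_s\,\kappa_\beta\,x_N^{p\beta}, \qquad \Lambda\in\{k^{1-s},\,1\},
\]
so the equation $\I u + u^p = 0$ reduces to $c^{1-p} = -(\Lambda\,C_s\,\kappa_\beta)^{-1}>0$, which uniquely determines $c>0$. Since $u\in C^\infty(\RN_+)$ and the identity holds pointwise there, $u$ is a classical, hence viscosity, supersolution enjoying all the required properties.

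The main obstacle is the sign analysis of $\kappa_\beta$ on $(0,s)$. Although natural in view of the super-harmonic character of $x_+^\beta$ for $\beta < s$, a self-contained proof needs to balance the definitely negative contribution on $(0,1)$ against the positive tail arising for $t$ large enough that $(1+t)^\beta > 2$, and this balance becomes delicate when $s>1/2$ and $\beta$ is close to $0$.
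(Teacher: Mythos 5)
Your ansatz, scaling reduction, and choice of exponent $\beta = \frac{2s}{1-p}$ coincide exactly with the paper's construction, and your analysis of the extremizers in the definitions of $\I_k^-$ and $\I_N^+$ is correct (indeed sharper than necessary: the paper only needs one-sided bounds, obtained for $\I_k^-$ by taking $\xi_i=e_{N-k+i}$ so that all but the $e_N$-term vanish, and for $\I_N^+$ by a pigeonhole argument giving some $|\langle\xi_{\bar i},e_N\rangle|\geq N^{-1/2}$). So the architecture of your proof is sound.

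The genuine gap, which you yourself flag as ``the main obstacle,'' is the assertion $\kappa_\beta<0$ for $\beta\in(0,s)$. Your sketched comparison argument does not close: while concavity gives the nonpositivity of the integrand on $(0,1)$, on $(1,\infty)$ the quantity $(1+t)^\beta-2$ changes sign at $t=2^{1/\beta}-1$, and comparing with $\kappa_s=0$ term-by-term fails because lowering $\beta$ to $\beta<s$ makes $(1+t)^\beta-(1+t)^s\leq 0$ but also makes $(1-t)_+^\beta-(1-t)_+^s\geq 0$ on $(0,1)$, so the two perturbations compete with opposite signs and no pointwise comparison settles the matter. The paper resolves this cleanly in Proposition~\ref{0801prop1} and Remark~\ref{T49-5} via the integration-by-parts identity (following \cite[Lemma 2.4]{BV})
\[
\kappa_\mu \;=\;\int_0^{\infty}\frac{(1+\tau)^{\mu}+(1-\tau)_+^{\mu}-2}{\tau^{1+2s}}\,d\tau
\;=\;\frac{\mu}{2s}\int_0^{\infty}\frac{(1+\tau)^{\mu-1}-(1+\tau)^{2s-\mu-1}}{\tau^{2s}}\,d\tau,
\]
from which $\kappa_\mu<0$ for $\mu\in(0,s)$ is immediate since $\mu-1<2s-\mu-1$ exactly when $\mu<s$. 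Your alternative suggestion (Dyda's closed-form expression for $(-\Delta)^s$ of $x_+^\beta$) would also work, but as written it is merely mentioned, not carried out. Without one of these ingredients the sign lemma --- and with it the whole construction --- is unproved.
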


\medskip

We end this introduction by pointing out that all the above results imply some explicit bounds on the critical exponents
\begin{equation}\label{9924eq1}
\begin{split}
p^\star\left(\I^\pm_k\right)&:=\inf\left\{p>1:\;\text{\eqref{2912eq1} has a nontrivial supersolution}\right\}\\
p_\star\left(\I^\pm_k\right)&:=\sup\left\{p<1:\;\text{\eqref{2912eq1} has a nontrivial supersolution}\right\}.
\end{split}
\end{equation}
The definitions \eqref{9924eq1}, borrowed from \cite{KLS}, relies on the fact that if  for some $p>1$ ($p<1$) there exists a nontrivial supersolution of \eqref{2912eq1}, then nontrivial supersolutions appear for any $q>p$ ($q<p$), see Propositions \ref{PropS}-\ref{PropS2}.\\ 
We recall that the operators $\I^+_k$ with $k\leq N$ and $\I^-_N$ satisfy the strong maximum principle, see e.g. \cite[Theorem 4.3]{BGS}, differently from $\I^-_k$ with $k<N$. Accordingly, in the definitions \eqref{9924eq1} for  $\I^+_k$ and $\I^-_N$ we can refer without loss of generality to positive supersolutions, while in the case $\I^-_k$, $k<N$, nontrivial supersolutions of \eqref{2912eq1} may vanish somewhere in $\R^N_+$ as proved in Theorem \ref{th1}. 

\smallskip
All the above results can be summarized in the following scheme

\medskip
\begin{center}
\begin{tabular}{|c|c|c|c|}
 \hline \textbf{Operator} & $\bm{p^\star}$ & $\bm{p_\star}$ + \eqref{2810eq1p} & $\bm{p_\star}$\\
 \hline $\bm{\I^-_k}$ with $k<N$ &1 & 1 & 1\\ 
\hline  $\bm{\I^-_N}$ & $\leq 1+\frac{2s}{\gamma^+}$ & $-\infty$ & $\in[-1\,,\,0]$ \\
\hline  $\bm{\I^+_k}$ with $k<N$  & $\displaystyle\geq\begin{cases}
\frac{1}{1-s} & \text{if $k=1$ and $s\in[\frac12,1)$}\\
	1+\frac{2s}{\bar\gamma+1} & \text{otherwise}
	\end{cases}$ & $-\infty$ & $\leq0$\\
\hline  $\bm{\I^+_N}$ & $\displaystyle\geq\begin{cases}
\frac{1}{1-s} & \text{if $k=1$ and $s\in[\frac12,1)$}\\
	1+\frac{2s}{\bar\gamma+1} & \text{otherwise}
	\end{cases}$ & $-\infty$ & $\in[-1\,,\,0]$\\
\hline
\end{tabular}
\end{center}

\smallskip

The explicit expression of $p^\star$ and $p_\star$ is thus achieved only for $\I^-_k$ and $k<N$, while in the other cases some indeterminacy occurs, leading to open problems that we list below:
\begin{enumerate}
	\item  obtain a lower bound for $p^\star(\I^-_N)$, which in turn depends on the availability of Liouville type results in the supercritical regime $p>1$;
	\item  give a upper bound for $p^\star(\I^+_k)$ by producing explicit examples of positive supersolutions. At the present stage we can only infer that
	$$
	p^\star\leq1+\frac{2s}{\bar\gamma},
	$$
	the right-hand side of the above inequality being  the critical exponent in the whole space $\R^N$, see \cite[Theorem 4.2]{BGT};
	\item   estimate from below $p_\star(\I^+_k)$ in the cases $k<N$, by obtaining explicit positive supersolutions in the singular regime $p<0$.
\end{enumerate}

The rest of this paper is organized as follows. In Section \ref{pr}, after presenting some notations and definitions, we prove some preliminary results. In Section \ref{NoEx} we present the proofs of all nonexistence results contained in the theorems stated in the Introduction, while Section \ref{Ex} is devoted to the existence part. Finally, the Appendix deals with some properties of the critical exponents defined  in \eqref{9924eq1}.

\bigskip
\noindent
\textbf{Notations.} Throughout the paper, $B_r(x)$ is the open ball of radius $r>0$ centered at $x\in\R^N$. If $x,y\in\R^N$, their scalar product is $\left\langle x,y\right\rangle=\sum_{i^1}^Nx_iy_i$ and $|x|$ stands for the Euclidean norm of $x$. For $x\in\R^N\backslash\left\{0\right\}$, we shall denote $\hat x=\frac{x}{|x|}$ and $x^\perp$ is any unit vector orthogonal to $x$.\\
The positive and negative parts of $t\in\R$ are $t_+=\max\left\{t,0\right\}$ and $t_-=\max\left\{-t,0\right\}$, respectively.\\
If $\Omega\subseteq\R^N$ is open, we shall denote by $LSC(\Omega)$, $USC(\Omega)$ and $C(\Omega)$ the spaces of lower semicontinuous, upper semicontinuous and continuous real functions on $\Omega$, respectively. If $\alpha\in(0,1]$, then $C^{0,\alpha}(\Omega)$ is the space of $\alpha$-H\"older continuous functions in $\Omega$.\\
The support of $u:\Omega\mapsto\R$ is denoted by ${\rm supp}\,u$.

\section{Some preliminaries}\label{pr}

This section is devoted to some technical results that will be employed in the proofs of the main results of the paper. We start by introducing a suitable function space in which  the notion of viscosity solution to \eqref{2912eq1} is settled.

\begin{definition}\label{S}
The set $\mathcal S$ is the space of all functions $u:\R^N\mapsto\R$ satisfying the following assumptions:
\begin{itemize}
	\item[(H1)] for any $x\in\R^N_+$ and any unit vector $\xi\in\R^N$, the map
	$$
	\tau\in\R\mapsto u(x+\tau\xi)
	$$
	is measurable;
	\item[(H2)] there exists $\alpha\in[0,2s)$ and $C>0$ such that 
	$$
	\left|u(x)\right|\leq C{(1+|x|)}^\alpha\quad\forall x\in\R^N.
	$$
\end{itemize}
\end{definition}

\begin{remark}\label{RemarkS}
{\rm
The set $\mathcal S$ give a sufficient condition for which if $u\in\mathcal S$ is twice differentiable in a neighborhood of $x_0\in\R^N_+$ then the quantities $\I^\pm_ku(x_0)$ are well defined.\\
We point out that $\I^\pm_ku(x_0)$ may be finite even for some singular or unbounded and oscillating function $u:\R^N\mapsto\R$.
}
\end{remark}

\begin{definition}\label{defviscsol}
In the following definition we denote by $\mathcal I$ any of the operators $\I^\pm_k$.\\
Let $f\in C(\R^N_+\times\R)$. A function $u\in \mathcal S\cap LSC(\R^N_+)$ ($u\in \mathcal S\cap USC(\R^N_+)$) is a viscosity supersolution (subsolution) of
$$
\mathcal Iu+f(x,u)=0\quad\text{in $\R^N_+$}
$$
if for any $x_0\in\R^N_+$ and every $\varphi\in C^2(\overline B_r(x_0))$, $r>0$, such that $B_r(x_0)\subseteq\R^N_+$ and 
$$
\varphi(x_0)=u(x_0)\;,\;\;\varphi(x)\leq u(x)\;\;(\varphi(x)\geq u(x) )\quad\forall x\in B_r(x_0),
$$
then if we let
$$
\psi(x)=\begin{cases}
\varphi(x) & \text{in $B_r(x_0)$}\\
u(x) & \text{in $\R^N\backslash B_r(x_0)$}
\end{cases}
$$
we have
\begin{equation}\label{2210eq1}
\mathcal I\psi(x_0)+f(x_0,u(x_0))\leq 0 \quad \left(\mathcal Iu(x_0)+f(x_0,u(x_0))\geq0)\right).
\end{equation}
A function $u\in\mathcal  S\cap LSC(\R^N_+)$ ($u\in \mathcal S\cap USC(\R^N_+)$) is a viscosity supersolution (subsolution) of\eqref{2912eq1} if it satisfies \eqref{2210eq1} with $f(x,u)=u^p$ and $u\geq0$ ($u\leq0$) in $\R^N\backslash\R^N_+$. 
\end{definition}

We remark that the above definition does not cover \eqref{2912eq1}, with $p\leq 0$,  
but, for such a problem, we will argue only the viscosity properties of those functions $u$ which are positive in $\R^N_+$.   Another remark is that, although good stability properties may be lost, 
the above definition makes sense even if $f : \R^N_+\times \R \to \R$ is not continuous. 

\begin{proposition}\label{w}
Let $s\in(0,1)$ and let $\mathcal I$ be either $\I^+_1,\ldots,\I^+_N$ or $\I^-_N$. Then there exists a positive radial function $w\in C^3(\R^N)$, with bounded derivatives up to order 3, such that 
\begin{equation}\label{0111eq1}
\begin{cases}
r\mapsto w(r)\;\; \text{is decreasing}\\
\I w(x)>0\;\;\text{if $|x|\geq1$.}
\end{cases}
\end{equation}
\end{proposition}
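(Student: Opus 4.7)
The plan is to construct $w$ explicitly as a smooth radial function $w(x) = v(|x|)$ and reduce the required inequality $\I w(x) > 0$ on $\{|x| \geq 1\}$ to a scalar estimate on $v$. By rotational invariance, for any unit direction $\xi$ the quantity $\I_\xi w(x)$ depends only on $r := |x|$ and $\theta^2 := \langle \hat x, \xi\rangle^2$; denote it $F(r, \theta^2)$. The radial contribution is
\[
F(r, 1) = C_s \int_0^\infty \frac{v(r+\tau) + v(|r-\tau|) - 2v(r)}{\tau^{1+2s}}\, d\tau,
\]
and the tangential contribution
\[
F(r, 0) = 2 C_s \int_0^\infty \frac{v(\sqrt{r^2+\tau^2}) - v(r)}{\tau^{1+2s}}\, d\tau
\]
is non-positive whenever $v$ is decreasing.

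For $\I^+_k$, picking the orthonormal $k$-tuple consisting of $\hat x$ together with $k-1$ unit vectors orthogonal to $\hat x$ yields the pointwise lower bound $\I^+_k w(x) \geq F(r, 1) + (k-1) F(r, 0)$. For $\I^-_N$, the constraint $\sum_{i=1}^N \langle \hat x, \xi_i\rangle^2 = 1$ for any orthonormal basis of $\R^N$, together with an extremal analysis of $\sum_i F(r, \theta_i^2)$ tailored to the specific $v$ introduced below, allows one to show the analogous lower bound $\I^-_N w(x) \geq F(r, 1) + (N-1) F(r, 0)$. It is therefore enough to exhibit $v$ of the required regularity with $F(r, 1) + (N-1) F(r, 0) > 0$ for every $r \geq 1$.

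I would take $v(r) = (1 + r^2)^{-\alpha/2}$ for a suitable exponent $\alpha > 0$: the associated $w$ is $C^\infty(\R^N)$, positive, strictly decreasing in $|x|$, with uniformly bounded derivatives of every order. Since $v(r) \sim r^{-\alpha}$ as $r \to \infty$, the homogeneity identity $\I_\xi(|x|^{-\alpha})(x) = c(\alpha, \hat x \cdot \xi)\,|x|^{-\alpha-2s}$, together with the fundamental-solution analysis of \cite{BGT}, gives that for $\alpha$ strictly above the relevant threshold ($\bar\gamma$ for $\I^+_k$ whenever it exists, and $\tilde\gamma$ for $\I^-_N$) the leading coefficient $c(\alpha, 1) + (N-1) c(\alpha, 0)$ is strictly positive; this produces the required inequality for $r \geq R_0$ with some $R_0 > 1$. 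Uniform positivity on the compact range $[1, R_0]$ is then settled by continuity and compactness, possibly after adjusting $v$ by a small smooth perturbation supported near the origin to correct the transition region. In the degenerate cases $\I^+_1$ with $s \in [\tfrac{1}{2}, 1)$, where $\bar\gamma$ does not exist, one performs the same construction starting from the unbounded radial solution $|x|^{2s-1}$ of $\I^+_1 u = 0$ used in the proof of Theorem \ref{th4}.

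The main obstacle is the uniform control of $F(r, 1) + (N-1) F(r, 0)$ on the whole range $r \geq 1$: the scaling argument gives positivity only at infinity, and in the intermediate regime $r \sim 1$ one must either compute directly or perturb $v$ slightly to ensure that the positive radial term outweighs the $(N-1)$ tangential negative contributions; in particular, the identification of the extremal basis for $\I^-_N$ is the most delicate step, and requires a precise use of the specific form of the trial $v$.
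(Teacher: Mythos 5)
Your proposal has a genuine gap in the treatment of $\I^-_N$, which is the heart of the matter. You claim that an "extremal analysis" gives the lower bound
$\I^-_N w(x) \geq F(r,1) + (N-1) F(r,0)$,
but this inequality runs in the wrong direction: $\I^-_N$ is an \emph{infimum} over orthonormal frames, so the particular frame consisting of $\hat x$ and $N-1$ tangential directions furnishes an \emph{upper} bound
$\I^-_N w(x) \leq F(r,1) + (N-1) F(r,0)$, and in fact for convex decreasing radial $w$ the infimum is strictly smaller. The paper resolves this by invoking the representation result of \cite[Theorem 3.4-(iii)]{BGT}, which identifies the extremal frame for $\I^-_N$ acting on such $w$: it is the \emph{symmetric} frame $\{\xi_1,\ldots,\xi_N\}$ with $\langle \hat x, \xi_i\rangle = 1/\sqrt N$ for every $i$, so that $\I^-_N w(x) = N \I_{\xi^*} w(x)$. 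Without that identification you have no handle on $\I^-_N$. Once $\I^-_N w > 0$ is established, the cases $\I^+_k$ ($k \leq N$) all follow immediately from $\I^+_k \geq \I^+_N \geq \I^-_N$ (this last inequality being simply $\inf \leq \sup$); in particular the separate treatment you propose for $\I^+_1$ with $s\in[\frac12,1)$ and the role of $\bar\gamma$ are irrelevant here, since the proposition is driven by $\tilde\gamma$, the exponent attached to $\I^-_N$.

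There is a second, related difficulty. Your trial $v(r)=(1+r^2)^{-\alpha/2}$ only agrees with $r^{-\alpha}$ asymptotically, so the sign of $\I w$ on $|x|\geq 1$ is controlled only for $|x|\geq R_0$, and you leave the intermediate range $[1,R_0]$ to "continuity, compactness, and a small perturbation," which is not a proof: compactness gives the existence of a minimum, not its positivity. The paper avoids the issue by design: $w_\gamma$ is obtained by replacing $|x|^{-\gamma}$ by a cubic polynomial in $|x|^2$ \emph{only on $\{|x|^2 < \tfrac12\}$}, and for the extremal direction $\xi^*$ of $\I^-_N$ one checks that whenever $|x|\geq 1$ the entire line $\{x+\tau\xi^*:\tau\in\R\}$ stays in $\{|y|^2\geq \tfrac12\}$, so $w_\gamma$ coincides there with the pure power $|y|^{-\gamma}$. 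Consequently $\I^-_N w_\gamma(x) = N C_s c(\gamma)|x|^{-\gamma-2s}$ \emph{exactly}, for every $|x|\geq 1$, and choosing any $\gamma>\tilde\gamma$ gives $c(\gamma)>0$ with no residual estimate needed on a compact transition zone. Your construction, as stated, does not reproduce this exactness and therefore does not close.
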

\begin{proof}
The construction of $w$ is done by modifying the function $|x|^{-\gamma}$ near the origin, where $\gamma>0$ has to be chosen  sufficiently large. For $\gamma>0$, let $\varphi_\gamma(r)=r^{-\gamma}$ and set
\begin{equation*}
w_\gamma(x)=\begin{cases}
\tilde f(|x|^2) & \text{if $|x|^2<\frac12$}\\
|x|^{-\gamma} & \text{if $|x|^2\geq\frac12$\,,}
\end{cases}
\end{equation*}
where $\tilde f$ is such that the graph of  $\tilde f''(r)$ is the tangent line to ${\left(\varphi_{\frac\gamma2}(r)\right)}''$ at $r=\frac12$. In addition we require that
$$
\tilde f\left(\frac12\right)=\varphi_{\frac\gamma2}\left(\frac12\right),\quad \tilde f'\left(\frac12\right)=\varphi'_{\frac\gamma2}\left(\frac12\right).
$$
A straightforward computation yields
\begin{equation*}
\tilde f(r)=\frac{\varphi'''_{\frac\gamma2}(\frac12)}{6}(r-\frac12)^3+\frac{\varphi''_{\frac\gamma2}(\frac12)}{2}(r-\frac12)^2+\varphi'_{\frac\gamma2}(\frac12)(r-\frac12)+\varphi_{\frac\gamma2}(\frac12).
\end{equation*}
By construction $w_\gamma\in C^3(\mathbb R^N)$ and all its derivatives, up to order 3, are bounded on $\RN$. Moreover $w$ is radially decreasing and
\begin{equation*}
w_\gamma(x)=\tilde g(|x|^2)
\end{equation*} with both $\tilde g$ and $\tilde g''$ positive and convex functions in $[0,+\infty)$. 

\smallskip

We consider  first the case $\I=\I^-_N$. According to \cite[Theorem 3.4-(iii)]{BGT}, for any $x\in \R^N$
\begin{equation}\label{301224eq1}
\I^-_Nw_\gamma(x)=N\I_{\xi^*}w_\gamma(x)\,,
\end{equation}
where $\xi^*\in\R^N$ is a unit vector such that $\left\langle \hat x,\xi^*\right\rangle=\frac{1}{\sqrt{N}}.$ Moreover, for $|x|\geq1$, we also have
$$
\left|x\pm\tau\xi^*\right|^2\geq\frac12\qquad\forall\tau\geq0
$$
and thus
\begin{equation}\label{301224eq2}
w_\gamma(x\pm\tau\xi^*)=\left|x\pm\tau\xi^*\right|^{-\gamma}\qquad\forall\tau\geq0.
\end{equation}
By \eqref{301224eq1}-\eqref{301224eq2} 
\begin{equation}\label{301224eq3}
\I^-_Nw_\gamma(x)=NC_sc(\gamma)|x|^{-\gamma-2s} \qquad\text{for any $|x|\geq1$,}
\end{equation}
where the function $c(\gamma)$, defined by
$$
c(\gamma)=\int\limits_0^{+\infty}\frac{\left(1+\tau^2+\frac{2}{\sqrt N}\tau\right)^{-\frac\gamma{2}}
+\left(1+\tau^2-\frac{2}{\sqrt N}\tau\right)^{-\frac{\gamma}2}-2}{\tau^{1+2s}}\,d\tau,
$$
has a unique zero $\tilde\gamma=\tilde\gamma(N,s)>0$ such that $c(\gamma)<0$ for $\gamma\in(0,\tilde\gamma)$ and $c(\gamma)>0$ for $\gamma\in(\tilde\gamma,+\infty)$, see \cite[Lemma 4.8]{BGT}. Then, by \eqref{301224eq3}, if  we pick any $\gamma>\tilde\gamma$ we obtain that
\begin{equation}\label{3110eq1}
\I^-_Nw_\gamma(x)>0\qquad\text{for any $|x|\geq1$.}
\end{equation}
If instead $\I=\I^+_k$, then by means of \cite[Lemma 3.3 and Theorem 3.4-(i)]{BGT}, we have
$$
\I^+_kw_\gamma(x)=I(|x|,1)+(k-1)I(|x|,0),
$$
where, for $\theta\in [0,1]$, 
$$
I(|x|,\theta)=C_s|x|^{-2s}\int\limits_0^{+\infty}\frac{\tilde g(|x|^2(1+\tau^2+2\tau\theta))+\tilde g(|x|^2(1+\tau^2-2\tau\theta))-2\tilde g(|x|^2)}{\tau^{1+2s}}\,d\tau.
$$
Note that, since $\tilde g'(r)<0$,
$$
I(|x|,0)=2C_s|x|^{-2s}\int\limits_0^{+\infty}\frac{\tilde g(|x|^2(1+\tau^2))-\tilde g(|x|^2)}{\tau^{1+2s}}\,d\tau<0.
$$
Hence
$$
\I^+_kw_\gamma(x)\geq I(|x|,1)+(N-1)I(|x|,0)=\I^+_Nw_\gamma(x)\geq\I^-_Nw_\gamma(x)
$$
and the result follows by \eqref{3110eq1}.
\end{proof}

\begin{proposition}\label{extended}
Let $s\in(0,1)$ and $\I$  be either $\I^+_1,\ldots,\I^+_N$ or $\I^-_N$. Then there exists a constant $c>0$ such that if $u\in LSC(B_{2R}(z))$ is, for some $R>0$ and $z\in\R^N$, a supersolution of 
$$
\I u=0\qquad\text{in $B_{2R}(x)$}
$$
and $u\geq0$ in $\R^N$, then
$$
\min_{\overline{B_R(z)}}u\geq c \min_{\overline{B_{R/2}(z)}}u.
$$
\end{proposition}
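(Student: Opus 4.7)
The plan is to build a radial barrier centered at $z$ that serves as a classical strict subsolution of $\I\phi = 0$ on the annulus $\{R/2 \le |x - z| < 2R\}$, is bounded below by a positive constant on $\overline{B_R(z)}$, and vanishes outside $B_{2R}(z)$. A viscosity comparison between this barrier and $u$ then yields the desired lower bound, with a scale-invariant constant $c$ that depends only on the fixed function $w$ from Proposition \ref{w}.

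Starting from that $w$, set $v(x) := w\!\left(\tfrac{2(x - z)}{R}\right)$. Translation invariance of $\I$ together with the $2s$-homogeneity under dilations ($\I_\xi(u \circ T) = \mu^{2s}\, \I_\xi u \circ T$ for $T(y) = \mu y$) yields $\I v(x) = (2/R)^{2s}\, \I w\!\left(\tfrac{2(x - z)}{R}\right) > 0$ whenever $|x - z| \ge R/2$. Since $w$ is radially decreasing, the function $\tilde w := v - w(4)$ is positive on $B_{2R}(z)$ and negative outside, so $\phi := (\tilde w)_+$ equals $\tilde w$ on $B_{2R}(z)$ and vanishes elsewhere. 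The pointwise inequality $\phi \ge \tilde w$ on $\R^N$ with equality on $B_{2R}(z)$ and the global comparison property \eqref{well} give
\[
\I \phi(x) \;\ge\; \I \tilde w(x) \;=\; \I v(x) \;>\; 0 \qquad \text{for } R/2 \le |x - z| < 2R,
\]
so $\phi$ is a classical strict subsolution on the annulus.

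Let $m := \min_{\overline{B_{R/2}(z)}} u$, $M_1 := w(0) - w(4)$, $M_2 := w(2) - w(4) > 0$, and $\psi := (m / M_1)\, \phi$. Then $\psi \le m \le u$ on $\overline{B_{R/2}(z)}$, while $\psi \equiv 0 \le u$ on $\R^N \setminus B_{2R}(z)$; also $\psi \ge (M_2/M_1)\, m$ on $\overline{B_R(z)}$. Suppose for contradiction that $\max_{\overline{B_{2R}(z)}}(\psi - u) > 0$. The function $\psi - u$ is upper semicontinuous on this compact set and by the preceding inequalities its maximum must be attained at some $x_0$ in the open annulus. Then $\varphi := \psi - (\psi(x_0) - u(x_0))$ is a $C^3$ test function touching $u$ from below at $x_0$. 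Applying Definition \ref{defviscsol} with the auxiliary function $\tilde\psi = \varphi$ on a small ball around $x_0$ (inside the annulus, where $\psi$ is smooth) and $\tilde\psi = u$ outside gives $\I \tilde\psi(x_0) \le 0$. On the other hand $\tilde\psi \ge \varphi$ on $\R^N$ with equality at $x_0$, so \eqref{well} yields $\I \tilde\psi(x_0) \ge \I \varphi(x_0) = \I \psi(x_0) > 0$, a contradiction. Hence $\psi \le u$ on $\overline{B_R(z)}$, which gives $\min_{\overline{B_R(z)}} u \ge c\, m$ with $c := M_2/M_1$.

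The main obstacle is justifying the strict subsolution property of $\phi$ on the annulus in spite of the corner produced by the positive-part cutoff at $|x - z| = 2R$: the integrand defining $\I_\xi \phi$ sees values of $\phi$ both inside and outside $B_{2R}(z)$. This difficulty is exactly what the global comparison property \eqref{well} (equivalently, the sub/superadditivity of $\I^+_k$ and $\I^-_N$) is designed to handle, since the truncation only raises $\phi$ above $\tilde w$ and leaves $\phi = \tilde w$ at every point of the annulus. The same comparison device is also what allows the interior-maximum argument to go through in the absence of a general strong comparison principle for these degenerate nonlocal operators.
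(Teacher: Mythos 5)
Your proof is correct and follows essentially the same approach as the paper: rescale the barrier $w$ from Proposition \ref{w}, normalize it, and derive a contradiction at an interior minimum of $u$ minus the barrier in the annulus. The one cosmetic difference is your truncation $\phi=(\tilde w)_+$ together with an extra appeal to \eqref{well}; the paper avoids this by letting its normalized barrier go negative outside $B_{2R}(z)$, where $u\ge 0$ already gives the needed ordering, but both routes land on the same constant $c=\bigl(w(2)-w(4)\bigr)/\bigl(w(0)-w(4)\bigr)$.
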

\begin{proof}
By the translation invariance, we may assume $z=0$. Let $w(x)=w(|x|)$ be the function produced by Propostion \ref{w} and satisfying \eqref{0111eq1}. Set
$$
\psi_R(x)=\psi_R(|x|)=w\left(\frac{2x}{R}\right).
$$
Then
$$
\I\psi_R\left(\frac{Rx}{2}\right)={\left(\frac{R}{2}\right)}^{-2s}\I w(x)>0\quad\;\text{if $|x|\geq1$},
$$
that is 
$$
\I\psi_R(x)>0\quad\;\text{if $|x|\geq\frac R2$}.
$$
Set 
$$
M_r=\min_{\overline{B_r(0)}} u\quad\;\text{for $r>0$}
$$
and define
$$
\phi(x)=\phi(|x|)=\frac{\psi_R(|x|)-\psi_R(2R)}{\psi_R(0)-\psi_R(2R)}\quad\;\text{for $x\in\R^N$}.
$$
Note that $r\mapsto\phi(r)$ is decreasing in $[0,+\infty)$, $\phi(0)=1$ and $\phi(r)\leq0$ for $r\geq 2R$.

If $M_{R/2}=0$, then we trivially have $M_R\geq c M_{R/2}$ for any $c>0$.

We consider the other case, so that $M_{R/2}>0$. Set $v(x)=M_{R/2}\phi(x)$ for $x\in\R^N$. It follows that if $|x|\leq R/2$, then
$$
u(x)\geq M_{R/2}=M_{R/2}\phi(0)\geq M_{R/2}\phi(x),
$$
and if $|x|\geq 2R$, then $\phi(x)\leq0$ and 
$$
u(x)\geq0\geq M_{R/2}\phi(x).
$$
We claim that 
$$
u\geq v\quad\text{in $\R^N$.}
$$
Suppose to the contrary that 
$$
\inf_{\R^N}(u-v)<0.
$$
Then there exists $x_R\in B_{2R}(0)\backslash\overline{B_{R/2}(0)}$ such that 
$$
(u-v)(x_R)=\min_{\R^N}(u-v).
$$
Since $u$ is a supersolution in $B_{2R}(0)$, we must have
$$
0\geq \I v(x_R)=\frac{M_{R/2}}{\psi_R(0)-\psi_{R}(2R)}\I\psi_R(x_R).
$$
But, since $|x_R|>R/2$, we have $\I\psi_R(x_R)>0$, which yields a contradiction. Thus, for $x\in\overline{B_{R}(0)}$, we have
$$
u(x)\geq v(x)=M_{R/2}\phi(x)\geq M_{R/2}\phi(R),
$$
which implies that 
$$
M_R\geq M_{R/2}\frac{\psi_R(R)-\psi_R(2R)}{\psi_R(0)-\psi_R(2R)}=M_{R/2}\frac{w(2)-w(4)}{w(0)-w(4)}.
$$
Thus, if we set
$$
c=\frac{w(2)-w(4)}{w(0)-w(4)}
$$
then $M_R\geq c M_{R/2}$.
\end{proof}

\begin{remark}\label{SMP}
{\rm
Proposition \ref{extended} can be used to give  a proof of the strong minimum principle \cite[Theorem 4.3 (ii)-(iii)]{BGS}. For instance, assume that $u\in LSC(\R^N)$ is a nonnegative (in $\R^N$) supersolution of $\I^-_Nu=0$ in an open and connected set $\Omega\subseteq\R^N$. We have either $u\equiv0$ in $\Omega$ or $u>0$ in $\Omega$.\\
To see this, set $Z=\left\{x\in\Omega:\;u(x)=0\right\}$ and $P=\left\{x\in\Omega:\;u(x)>0\right\}$. We need to show that either $Z=\emptyset$ or $P=\emptyset$. Suppose that both are nonempty sets. Since $u\in LSC(\R^N)$, $Z$ is closed in $\Omega$ and $P$ is open. If $P$ is closed relatively in $\Omega$, then the connectedness of $\Omega$ tells us that $P=\Omega$, which is the desired contradiction. Otherwise, $P$ is not relatively closed in $\Omega$. Here, note by definition that $Q\subseteq \Omega$ is relatively closed in $\Omega\; \Longleftrightarrow\; Q=F\cap\Omega$ for some closed $F$ in $\R^N\;\Longleftrightarrow\; Q=\overline Q\cap\Omega$, where $\overline Q$ is the closure of $Q\in \R^N$. Thus, by the current assumption, we have $P\neq\overline P\cap\Omega$. Since $P\subseteq\Omega$, this implies that there is a point $z\in\overline P\cap\Omega$ such that $z\notin P$. That is, $z\in\Omega$, $z\in\partial P$ and $z\in Z$. We may choose $R>0$ so that $B_{5R}(z)\subseteq\Omega$. Since $z\in\partial P$, there exists $y\in P\cap B_R(z)$. Note that $B_{4R}(y)\subseteq B_{5R}(x)\subseteq\Omega$. Since $Z$ is closed in $\Omega$ (i.e. $Z=\overline Z\cap\Omega$), which implies that $Z\cap B_{4R}(y)=\overline Z\cap B_{4R}(y)$, and $z\in B_R(y)\cap Z$, there exists a closest point $q\in Z$ from $y$. Since it is closest, if we set $r:=|q-y|$, then $r<R$ and $B_r(y)\subseteq P$. For any $\rho\in(0,r)$, since $\overline{ B_{\rho}(y)}\subseteq P$, we have $\min_{\overline{ B_{\rho}(y)}}u>0$. Note that $B_{4\rho}(y)\subseteq\Omega$. By Proposition \ref{extended}, we find that for some $c>0$,
 $$
\min_{\overline{B_{2\rho}(y)}}u\geq c\min_{\overline{B_{\rho}(y)}}u>0.
$$
We can choose $\rho\in(0,r)$ so that $2\rho>r$, which implies that $q\in B_{2\rho}(y)$ and, moreover, $u(q)>0$, a contradiction.
}
\end{remark}

\begin{lemma}\label{lem1}
Let $\xi\in\SN$ and let $v\in C^3(\RN)$ be such that all its derivatives, up to order 3, are bounded on $\RN$. If 
\begin{itemize}
	\item[(i)] $v$ is bounded 
\end{itemize}
or 
\begin{itemize}
	\item[(ii)] $v$ is unbounded and $s\in(\frac12,1)$
\end{itemize}
 then the function 
$$
y\mapsto\I_\xi v(y)
$$ 
is differentiable everywhere and 
\begin{equation}\label{3.4.2}
D\I_\xi v(y)=\I_\xi Dv(y).
\end{equation}
\end{lemma}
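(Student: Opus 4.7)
The plan is to justify interchanging the gradient $D_y$ with the $\tau$-integral in the defining formula \eqref{eq1}, by splitting the range of integration at $\tau=1$ and exhibiting, on each of the two resulting sub-intervals, a $y$-uniform integrable majorant for the partial derivatives of the integrand. Once this is done, the standard Lebesgue theorem on differentiation under the integral sign, applied componentwise, delivers both the differentiability of $y \mapsto \I_\xi v(y)$ at every $y \in \RN$ and the identity \eqref{3.4.2}.

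For the near-zero piece $\int_0^1\!\cdots d\tau$, set $h_\tau(y) := v(y+\tau\xi)+v(y-\tau\xi)-2v(y)$, so that $Dh_\tau(y) = Dv(y+\tau\xi)+Dv(y-\tau\xi)-2Dv(y)$. Since each first partial $\partial_j v$ is of class $C^2$ with second derivatives controlled by $\|D^3 v\|_\infty<\infty$, two successive applications of the Newton--Leibniz formula along $\pm\xi$ give
\[
\bigl|Dv(y+\tau\xi)+Dv(y-\tau\xi)-2Dv(y)\bigr| \leq \sqrt{N}\,\|D^3 v\|_\infty\,\tau^2
\]
uniformly in $y$. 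Dividing by $\tau^{1+2s}$ yields a majorant of order $\tau^{1-2s}$, which is integrable on $(0,1)$ because $s<1$.

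For the tail $\int_1^{\infty}\!\cdots d\tau$, the assumption that all derivatives up to order three are bounded forces $\|Dv\|_\infty<\infty$ regardless of whether $v$ itself is bounded; hence the trivial estimate $|Dh_\tau(y)|\leq 4\|Dv\|_\infty$ provides the majorant $4\|Dv\|_\infty\,\tau^{-1-2s}$, integrable on $(1,+\infty)$ for every $s>0$. Notice that the dichotomy between cases (i) and (ii) does not enter the differentiation step itself; it is needed only to ensure that the original scalar integral $\I_\xi v(y)$ converges, since in case (ii) the tail estimate $|h_\tau(y)|\leq 2\tau\|Dv\|_\infty$ leaves a decay of order $\tau^{-2s}$ at infinity that is integrable exactly when $s>\tfrac12$.

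The main technical point to handle carefully is the near-zero estimate: the full $C^3$ regularity of $v$, not merely $C^2$, is essential to extract the $\tau^2$ cancellation in $Dh_\tau(y)$ and thereby avoid a non-integrable $\tau^{-1-2s}$ singularity. The tail estimate and the identification $D\I_\xi v(y) = \I_\xi Dv(y)$ are then immediate consequences of Lebesgue's theorem and of the very definition \eqref{eq1} applied to each component of $Dv$.
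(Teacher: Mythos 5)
Your proof is correct and follows the same overall strategy as the paper (differentiation under the integral sign, Taylor cancellation near $\tau=0$, split at $\tau=1$, Lebesgue domination). The one place where you genuinely deviate is the tail estimate for the derivative integrand: the paper, in case (ii), bounds $g_{y_j}(y,\tau) = D v(y+\tau\xi)+Dv(y-\tau\xi)-2Dv(y)$ by $C\tau$ at infinity (via the mean value theorem applied to $g_{y_j}$ in $\tau$), giving a majorant $C\min\{\tau^{1-2s},\tau^{-2s}\}$ whose integrability forces $s>\tfrac12$; you instead use the trivial bound $|g_{y_j}|\le 4\|Dv\|_\infty$, which is always available since $Dv$ is bounded by hypothesis, producing the stronger majorant $C\min\{\tau^{1-2s},\tau^{-1-2s}\}$ that is integrable for every $s\in(0,1)$. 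This lets you isolate the role of the hypotheses cleanly: the $s>\tfrac12$ restriction in case (ii) is not needed for the differentiation step at all, only to make $\I_\xi v(y)$ itself a convergent integral (since without boundedness of $v$, the tail of $g$ only decays like $\tau$, yielding $\tau^{-2s}$ decay of the original integrand). This is a sharper bookkeeping of the same argument and is an improvement in clarity over the paper's case-by-case majorant; both, of course, give the stated conclusion.
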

\begin{proof}
Write $\I_\xi v(y)$ as
\[
\I_{\xi}\psi(y)=C_s\int \limits_0^{+\infty} \frac{g(y,\tau)}{\tau^{1+2s}}\,d\tau,
\]
where 
\[
g(y,\tau)=v(y+\tau\xi)+v(y-\tau\xi)-2v(y). 
\]
Note $g\in C^3(\R^N\times\R)$, that the derivatives of $g$ up to of order 3 are bounded on $\R^{N+1}$
and that  
\[ \begin{aligned}
&g(y,0)=0, \quad
\\&
g_{y_j}(y,\tau)=v_{y_j}(y+\tau\xi)+v_{y_j}(y-\tau\xi)-2 v_{y_j}(y), 
\quad
g_{y_j}(y,0)=0,
\quad
\\& 
g_{\tau}(y,\tau)=\langle Dv(y+\tau\xi),\xi\rangle-\langle Dv(y-\tau\xi),\xi\rangle,
\quad
g_{\tau}(y,0)=0,
\quad
\\&
g_{y_j\tau}(y,\tau)=\langle D v_{y_j}(y+\tau\xi), \xi\rangle-\langle Dv_{y_j}(y-\tau\xi),\xi\rangle,
\quad
g_{y_j \tau }(y,0)=0. 
\end{aligned}
\]
Thus, we have
\begin{equation}\label{0601eq1}
g(y,\tau)=\frac 12 g_{\tau\tau}(y,\theta_1\tau)\tau^2,\qquad 
g_{y_j}(y,\tau)=\frac 12 g_{y_j\tau\tau}(y,\theta_2\tau)\tau^2
\end{equation}
for some $\theta_1,\theta_2\in(0,1)$. 

\medskip

\noindent
(i) Since $v$ is bounded, it follows that there is a constant $C>0$ such that 
\[
\max\{|g(y,\tau)|, |g_{y_j}(y,\tau)|\}\leq C\min\{\tau^2,1\}
\]
for all $j,y,\tau$. Hence there exists $C>0$ such that for any $\tau>0$
\[
\max\left\{
\left|\frac{g(y,\tau)}{\tau^{1+2s}}\right|, 
\left|\frac{Dg(y,\tau)}{\tau^{1+2s}}\right|
\right\}\leq C \min\{\tau^{1-2s}, \tau^{-1-2s}\}. 
\]
Since $\tau \mapsto \min\{\tau^{1-2s},\tau^{-1-2s}\}$ is integrable in $(0,\infty)$, 
we find that 
\[
D_{y_j}\int\limits_0^{+\infty} \frac{g(y,\tau)}{\tau^{1+2s}} d\tau=\int\limits_0^{+\infty} \frac{D_{y_j}g(y,\tau)}{\tau^{1+2s}}d\tau,  
\]
and
\[
D_{y_j}\I_{\xi} v(y)=\I_{\xi} D_{y_j}v(y).
\]
(ii) Also, we have
\begin{equation}\label{0601eq2}
|g(y,\tau)|\leq g_\tau(y,\theta_3\tau)\tau,\qquad 
|g_{y_j}(y,\tau)|\leq g_{y_j\tau}(y,\theta_4\tau)\tau
\end{equation}
for some $\theta_3,\theta_4\in(0,1)$. 

Using \eqref{0601eq1} and \eqref{0601eq2}, it follows that there is a constant $C>0$ such that 
\[
\max\{|g(y,\tau)|, |g_{y_j}(y,\tau)|\}\leq C\tau\min\{\tau,1\}
\]
for all $j,y,\tau$. Hence, 
\[
\max\left\{
\left|\frac{g(y,\tau)}{\tau^{1+2s}}\right|, 
\left|\frac{Dg(y,\tau)}{\tau^{1+2s}}\right|
\right\}\leq C \min\{\tau^{1-2s}, \tau^{-2s}\}. 
\]
Since by  assumption $s>\frac12$, then  $\tau \mapsto \min\{\tau^{1-2s},\tau^{-2s}\}$ is integrable in $(0,\infty)$ 
and once again we obtain  \eqref{3.4.2}. 
\end{proof}
\begin{remark}
{\rm  
We shall apply Lemma \ref{lem1} to functions  coinciding, for $|x|\geq1$, with $|x|^{-\gamma}$ and with $-|x|^\gamma$ for suitable $\gamma\in(0,1)$.}
\end{remark}

\begin{proposition}\label{0801prop1}
Let $\xi\in\mathbb S^{N-1}$ and let $\mu\in(0,s]$. The function 
$$
z(x)=(x_N)_+^\mu\qquad \text{for $x\in\RN$}
$$
is a supersolution of 
$$
\I_\xi z(x)=0\qquad\text{in $\RN_+$}.
$$
\end{proposition}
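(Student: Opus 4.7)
The plan is to verify classically that $\I_\xi z(x)\le 0$ for every $x\in\R^N_+$, by reducing to a one-dimensional computation and then combining the classical $s$-harmonicity of $t_+^s$ on the positive half-line with a Kato-type concavity inequality. Set $b:=\langle\xi,e_N\rangle$. If $b=0$ then $z(x\pm\tau\xi)=z(x)$ for every $\tau$, so $\I_\xi z(x)=0$. If $b\neq 0$, the change of variable $\sigma=|b|\tau$ in \eqref{eq1} gives
\begin{equation*}
\I_\xi z(x)=C_s|b|^{2s}\int_0^{+\infty}\frac{(x_N+\sigma)_+^\mu+(x_N-\sigma)_+^\mu-2\, x_N^\mu}{\sigma^{1+2s}}\,d\sigma,
\end{equation*}
which, up to the positive factor $|b|^{2s}$, is the one-dimensional quantity $L_\mu(x_N)$ obtained by evaluating the analogue of \eqref{eq1} on $\R$ at the function $f_\mu(t):=t_+^\mu$ and the point $t=x_N$. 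By homogeneity $L_\mu(t)=t^{\mu-2s}L_\mu(1)$, so it suffices to show $L_\mu(1)\le 0$ for every $\mu\in(0,s]$.

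For $\mu=s$ the function $t\mapsto t_+^s$ is $s$-harmonic on $(0,+\infty)$, i.e.\ $L_s\equiv 0$; this is a classical identity obtained by a direct Beta-function computation after splitting the integral into $(0,1)$ and $(1,+\infty)$. For $\mu\in(0,s)$ I would invoke a Kato-type inequality. Writing $f_\mu=\varphi\circ f_s$ with $\varphi(w):=w^{\mu/s}$, concave and nondecreasing on $[0,+\infty)$, for every $t>0$ and $\tau>0$ concavity yields
\begin{equation*}
\varphi(f_s(t+\tau))+\varphi(f_s(t-\tau))-2\varphi(f_s(t))\le\varphi'(f_s(t))\bigl(f_s(t+\tau)+f_s(t-\tau)-2f_s(t)\bigr).
\end{equation*}
Dividing by $\tau^{1+2s}$, integrating over $(0,+\infty)$, and using $\varphi'(f_s(t))\ge 0$ together with $L_s(t)=0$ then gives $L_\mu(t)\le 0$, as desired.

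To conclude, note that $z\in C^\infty(\R^N_+)$ and $|z(x)|\le(1+|x|)^\mu$ with $\mu\le s<2s$, so $z\in\mathcal S$ and the inequality $\I_\xi z(x)\le 0$ holds classically in $\R^N_+$. For any admissible test function $\varphi$ touching $z$ from below at some $x_0\in\R^N_+$, the extended function $\psi$ of Definition \ref{defviscsol} satisfies $\psi\le z$ on $\R^N$ with $\psi(x_0)=z(x_0)$; by the positivity of the kernel defining $\I_\xi$, one gets $\I_\xi\psi(x_0)\le\I_\xi z(x_0)\le 0$. The main subtle step is the Kato-type monotonicity of the second paragraph; the reduction to $L_\mu$, the classical identity $L_s\equiv 0$, and the viscosity verification are routine.
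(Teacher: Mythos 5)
Your proof is correct, and it takes a genuinely different route from the one in the paper. The paper, after the same reduction to the one-dimensional quantity along $e_N$, evaluates the constant
$$
c_{s,\mu}=\int_0^{+\infty}\frac{(1+\tau)^\mu+(1-\tau)_+^\mu-2}{\tau^{1+2s}}\,d\tau
$$
by the integration-by-parts manipulation of \cite[Lemma 2.4]{BV}, arriving at the closed form
$c_{s,\mu}=\frac{\mu}{2s}\int_0^{+\infty}\frac{(1+\tau)^{\mu-1}-(1+\tau)^{2s-\mu-1}}{\tau^{2s}}\,d\tau$,
from which the sign is read off directly (and the strict inequality $c_{s,\mu}<0$ for $\mu<s$, used later in Remark~\ref{T49-5}, falls out at no extra cost). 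You instead take the known $s$-harmonicity of $t_+^s$ on $(0,\infty)$ as the baseline case $\mu=s$ and then propagate the nonpositivity to $\mu<s$ by a pointwise Kato-type inequality $\delta^2[\varphi\circ f_s]\le \varphi'(f_s)\,\delta^2 f_s$ for the concave change of unknown $\varphi(w)=w^{\mu/s}$. Both arguments are sound; yours is more conceptual and avoids the explicit Beta-function computation, while the paper's yields the quantitative formula for $c_{s,\mu}$ that is reused elsewhere. Two small points worth flagging: (i) in the Kato step you should note that the tangent inequality at $c=f_s(t)>0$ also covers the case $a=f_s(t-\tau)=0$, which is immediate since $\varphi(0)=0\le(1-\mu/s)c^{\mu/s}$; (ii) the final viscosity-verification paragraph is harmless but superfluous here, since $z\in C^\infty(\R^N_+)\cap\mathcal S$ and so $\I_\xi z$ is computed classically at every interior point, exactly as in the paper.
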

\begin{proof}
Fix $x\in\R^N_+$. If $\xi_N=0$, then 
\[
z(x+\tau \xi)+z(x-\tau\xi)-2z(x)=0 \ \ \text{ for } \tau\in\R,
\]
and $\I_{\xi}z(x)=0$. 

Assume that $\xi_N\neq0$. Since $\I_{-\xi}z(x)=\I_{\xi}z(x)$, 
we may assume that $\xi_N>0$. We have
$$
\I_\xi z(x)=C_s\xi_N^{2s}x_N^{\mu-2s}\int\limits_0^{+\infty} \frac{(1+\tau)^\mu+(1-\tau)_+^\mu-2}{\tau^{1+2s}}\,d\tau.
$$
Set
$$
c_{s,\mu}=\int\limits_0^{+\infty} \frac{(1+\tau)^\mu+(1-\tau)_+^\mu-2}{\tau^{1+2s}}\,d\tau.
$$
 Arguing as in \cite[Lemma 2.4]{BV}, but for general $\mu\in(0,2s)$ instead of $\mu=s$, we obtain that
$$
c_{s,\mu}=
\frac{\mu}{2s}\int\limits_0^{+\infty}\frac{(1+\tau)^{\mu-1}-(1+\tau)^{2s-\mu-1}}{\tau^{2s}}\,d\tau.
$$  
Hence the conclusion easily follows.
\end{proof}
\begin{remark}\label{T49-5}
{\rm
If $0<\mu<s$, then $c_{s,\mu}<0$ and for $x\in\RN_+$
$$
\I_{e_N}z(x)=C_sc_{s,\mu}x_N^{\mu-2s}=C_sc_{s,\mu}z^{1-\frac{2s}{\mu}}(x).
$$}
\end{remark}


\section{Liouville-type results}\label{NoEx}

This section is concerned with the nonexistence results of positive supersolutions expressed by Theorems \ref{th3}-\ref{th4}-\ref{INsublinear}-\ref{thmsingular}.

\medskip

Let $p\in\R$ and let $\I$ be denote any of $\I^\pm_k$ with $k=1,\ldots,N$. Assume that $u\in LSC(\R^N)\cap\mathcal S$ is a viscosity supersolution of
\begin{equation*}
\begin{array}{rl}
\I u(x)+u^p(x)=0 & \text{in\; $\mathbb R^N_+$}
\end{array}
\end{equation*}
such that
$$
u\geq0\quad\text{in $\R^N$},\qquad u>0\quad\text{in $\R^N_+$}.
$$
For $R>0$, set
$$
M_1(R)=\min_{\overline{B_R(5Re_N)}}u\qquad\text{and}\qquad M_2(R)=\min_{\overline{B_{2R}(5Re_N)}}u.
$$
Notice that $B_{5R}(5Re_N)\subset\R^N_+$.

\smallskip
Let $\eta:[0,+\infty)\mapsto\R$ be a smooth cut-off function such that 
$$
\eta(r)=1\quad\text{for $r\in[0,1]$},\qquad\eta(r)=0\quad\text{for $r\geq2$},\qquad \eta'(r)\leq0\quad\text{for any $r\geq0$.}
$$ 
Set 
$$
\zeta(x)=M_1(R)\eta\left(\frac xR-5e_N\right).
$$
For some positive constant $C_\eta$, we have
$$
\I\zeta(x)\geq- C_\eta M_1(R)R^{-2s}\quad\;\forall x\in\R^N.
$$
If $|x-5Re_N|\leq R$, then
$$
\zeta(x)= M_1(R)\leq u(x).
$$
If  $|x-5Re_N|\geq 2R$, then
$$
\zeta(x)=0\leq u(x).
$$
Noting that $$\zeta(x)=u(x)\;\quad\text{at some point $x\in\overline{B_R(5Re_N)}$},$$
we find that there is $z\in B_{2R}(5Re_N)$ such that 
$$
\min_{\R^N}(u-\zeta)=(u-\zeta)(z)\leq0.
$$
Since $z\in\R^N_+$, by the supersolution property for $u$, we have 
\begin{equation}\label{0111peq1}
u^p(z)\leq -\I \zeta(z)\leq C_\eta M_1(R)R^{-2s}.
\end{equation}

\noindent
\textbf{Case $p\leq0$.} Since $(u-\zeta)(z)\leq0$, we have
$$
u(z)\leq\zeta(z)\leq M_1(R),
$$
and hence
$$u^p(z)\geq M^p_1(R).$$
This combined with \eqref{0111peq1} yields
\begin{equation}\label{0111peq2}
1\leq C_\eta R^{-2s}M^{1-p}_1(R).
\end{equation}
\noindent
\textbf{Case $p>0$.} In this case we are concernend only with $\I=\I^-_N$ or $\I=\I^+_k$ for $k=1,\ldots,N$. In this way the conclusion of Proposition \ref{extended} is at our disposal. Since $z\in B_{2R}(5Re_N)$, we have
$$
u(z)\geq M_2(R)
$$
and this, together with \eqref{0111peq1}, yields
$$
M^p_2(R)\leq  C_\eta M_1(R)R^{-2s}.
$$
Since
$
B_{4R}(5Re_N)\subset\R^N_+
$, by Proposition \ref{extended} 
$$
M_2(R)\geq cM_1(R).
$$
Hence we obtain
\begin{equation}\label{0111peq3}
1\leq c^{-p}C_\eta R^{-2s}M^{1-p}_1(R).
\end{equation}

\subsection{The singular case $p<0$}
The proof of Theorem \ref{thmsingular} is a consenquence of the inequality \eqref{0111peq2}.

\begin{proof}[Proof of Theorem \ref{thmsingular}]
Suppose to the contrary that there is a function $u\in LSC(\R^N)\cap S$ satisfying \eqref{2810eq1}. Then, for $R\geq1$, one has 
$$
M_1(R)=\min_{\overline{B_R(5Re_N)}}u\leq C\min_{\overline{B_R(5Re_N)}}{(1+|x|)}^\alpha=C{(1+4R)}^\alpha\leq C{(5R)}^\alpha.
$$
Therefore, we find by \eqref{0111peq2} that 
$$
1\leq C_\eta C^{1-p}5^{\alpha(1-p)}R^{\alpha(1-p)-2s}.
$$
This give a contradiction for sufficiently large $R$ since $\alpha(1-p)-2s<0$.
\end{proof}

\subsection{The sublinear case $0<p\leq1$}

The proofs of Theorems \ref{th3}-\ref{th4}-\ref{INsublinear}, for $0<p\leq1$, follows from the following

\begin{proposition}\label{1novprop1}
Let $0<p\leq1$. For $\I=\I^-_N$ or $\I=\I^+_k$, $k=1,\ldots,N$, if $u\in LSC(\R^N_+)\cap\mathcal S$ is a nonnegative (in $\R^N$) viscosity supersolution of 
$$
\I u(x)+u^p(x)=0 \;\quad \text{in\; $\mathbb R^N_+$},
$$
then $u(x)\equiv0$ in $\R^N_+$.
\end{proposition}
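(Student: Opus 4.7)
The plan is to proceed by contradiction: assume that $u$ does not vanish identically on $\R^N_+$, promote this via the strong minimum principle to the stronger statement $u>0$ everywhere in $\R^N_+$, and then feed this into the a priori inequality \eqref{0111peq3} that was established just before the statement of the proposition. The growth condition built into $\mathcal S$ will force the right-hand side of \eqref{0111peq3} to vanish as $R\to\infty$, yielding the contradiction.

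More concretely, since $u\geq0$ in $\R^N$ and $u^p\geq0$ in $\R^N_+$ for $p>0$, the supersolution property $\I u+u^p\leq 0$ implies $\I u\leq 0$ in the viscosity sense on $\R^N_+$. Because $\I$ is one of $\I^+_k$ or $\I^-_N$, Proposition \ref{extended} applies, and the argument of Remark \ref{SMP} yields the strong minimum principle on the open connected set $\R^N_+$. Therefore, if $u\not\equiv 0$ on $\R^N_+$ we obtain $u>0$ everywhere in $\R^N_+$, which is precisely the hypothesis under which the chain of estimates culminating in
\begin{equation*}
1\leq c^{-p}C_\eta R^{-2s}M_1^{1-p}(R)
\end{equation*}
was derived above, with $M_1(R)=\min_{\overline{B_R(5Re_N)}}u$.

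Next, I would exploit membership of $u$ in $\mathcal S$. Assumption (H2) furnishes constants $C>0$ and $\alpha\in[0,2s)$ with $u(x)\leq C(1+|x|)^\alpha$; in particular, for $R\geq 1$,
\begin{equation*}
M_1(R)\leq \max_{\overline{B_R(5Re_N)}}C(1+|x|)^\alpha\leq C(1+6R)^\alpha\leq C' R^\alpha.
\end{equation*}
Substituting this into \eqref{0111peq3} produces $1\leq C'' R^{\alpha(1-p)-2s}$. Since $0<p\leq 1$ forces $1-p\in[0,1)$, we have $\alpha(1-p)\leq\alpha<2s$, so the exponent $\alpha(1-p)-2s$ is strictly negative and the right-hand side tends to $0$ as $R\to\infty$; this is the desired contradiction, and forces $u\equiv 0$ on $\R^N_+$.

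The step I expect to be most delicate is the invocation of the strong minimum principle, because Remark \ref{SMP} is written explicitly only for $\I^-_N$. However, Proposition \ref{extended} is proved uniformly for the entire family $\I^+_1,\dots,\I^+_N,\I^-_N$, so the topological/covering argument of Remark \ref{SMP}, which uses only the Harnack-type propagation from $B_\rho(y)$ to $B_{2\rho}(y)$, carries over verbatim. The remaining routine point is the mild separation of the $p=1$ case, where $M_1^{1-p}(R)=1$ and the contradiction becomes immediate ($1\leq c^{-1}C_\eta R^{-2s}\to 0$), so no growth estimate on $u$ is actually needed there.
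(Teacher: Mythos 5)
Your proof is correct and follows essentially the same route as the paper's: contradiction, strong minimum principle via Proposition \ref{extended} and Remark \ref{SMP} to get $u>0$ in $\R^N_+$, then the growth bound (H2) combined with \eqref{0111peq3} to force $1\leq C'' R^{\alpha(1-p)-2s}\to 0$. The paper additionally cites \cite[Theorem 4.3 (ii)]{BGS} alongside Remark \ref{SMP} for the strong minimum principle, but your observation that the covering argument of Remark \ref{SMP} extends verbatim to all the operators covered by Proposition \ref{extended} is exactly what is being used.
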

\begin{proof}
Suppose to the contrary the $u\not\equiv0$ in $\R^N_+$. By the strong maximum principle, see \cite[Theorem 4.3 (ii)]{BGS} and Remark \ref{SMP}, we have $u>0$ in $\R^N_+$. Observe that for $R\geq1$,
$$
M_1(R)=\min_{\overline{B_R(5Re_N)}}u\leq C\min_{\overline{B_R(5Re_N)}}{(1+|x|)}^{\alpha}=C{(1+4R)}^\alpha\leq C{(5R)}^\alpha
$$
This together with \eqref{0111peq3} yields
$$
1\leq c^{-p}C_\eta C^{1-p}5^{\alpha(1-p)}R^{\alpha(1-p)-2s}
$$ 
which gives a contradiction when $R$ is sufficiently large since, by the assumption $u\in\mathcal S$, it turns out that $\alpha(1-p)-2s<0$.
\end{proof}
\begin{remark}
{\rm
Proposition \ref{1novprop1} continues to hold under the growth assumption $$|u(x)|\leq C(1+|x|)^\alpha$$ for some $C>0$ and $0\leq \alpha<\frac{2s}{1-p}$, which is more general then $u\in\mathcal S$, provided the operator $\I u$ is well defined.
}
\end{remark}


\subsection{Theorems \ref{th3}-\ref{th4} for $p>1$} \label{secLiouville}
In this section we consider the problem
\begin{equation}\label{eq21}
\I^+_ku(x)+u^p(x)\leq0\quad\text{in}\quad\RN_+\qquad\text{and}\qquad u\geq0\quad\text{in}\quad\RN.
\end{equation}

\noindent

\medskip

The proofs of the  Theorems \ref{th3} and \ref{th4}  rely on the construction of a (classical and strict) subsolution of the homogeneous equation $\I^+_ku=0$ in $\RN_+\backslash \overline{B_{R_0}(0)}$, for some large $R_0$. Such subsolution is then used to estimate from below any nontrivial solutions of \eqref{eq21}, which in turn implies an estimate from above of the quantity $M^{1-p}_1(R)$ in \eqref{0111peq3}.

\medskip
We start from Theorem \ref{th3}, hence we assume $s\in\left(0,\frac12\right)$ if $k=1$, while $s\in(0,1)$ if $k\geq2$. The constant $\bar\gamma\in(0,1)$, see \cite[Proposition 3.7]{BGT}, is the unique number vanishing the function 
\begin{equation}\label{811eq1}
c_k(\gamma)=\hat c(\gamma)+(k-1)c^\perp(\gamma)
\end{equation}
 where\footnote{Henceforth P.V. stands for the Cauchy Principal Value.}
\begin{equation}\label{811eq2}
\hat c(\gamma)=C_s\text{P.V.}\,\int\limits_{-\infty}^{+\infty}\frac{|1+\tau|^{-\gamma}-1}{|\tau|^{1+2s}}\,d\tau\;\quad\text{and}\;\quad
c^\perp(\gamma)=2C_s\int\limits_0^{+\infty}\frac{{(1+\tau^2)}^{-\frac\gamma2}-1}{\tau^{1+2s}}\,d\tau.
\end{equation}
The function $c_k(\gamma)$ is negative for $\gamma<\bar\gamma$ and positive $\gamma>\bar\gamma$.

\begin{proposition}\label{propSubsol}
There exist a constant $R_0>1$ and a function $\psi\in C^2(\RN)\cap L^\infty(\RN)$, depending on $k$ and $s$, such that 
\begin{equation}\label{3012eq1}
\I^+_k\psi(x)>0 \qquad \forall x\in\RN_+\backslash \overline{B_{R_0}(0)}
\end{equation}
\begin{equation}\label{3012eq2}
\psi(x)\geq\frac{x_N}{|x|^{\bar\gamma+2}} \;\quad\forall x\in\RN_+\backslash \overline{B_1(0)}\,\quad{and}\,\quad\lim_{\stackrel{|x|\to+\infty}{x_N\neq0}
}\frac{\psi(x)}{\frac{x_N}{|x|^{\bar\gamma+2}}}=1
\end{equation}
\begin{equation}\label{3012eq3}
\psi(x)>0\quad\text{if\; $x_N>0$},\;\;\;\psi(x)=0\quad\text{if\; $x_N=0$},\;\;\;\psi(x)<0 \quad\text{if\; $x_N<0$}.
\end{equation}
\end{proposition}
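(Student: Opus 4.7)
The plan is to realize $\psi$, up to a smooth bounded modification near two poles, as a normalized first-order central difference of the fundamental radial solution $w_{\bar\gamma}(x)=|x|^{-\bar\gamma}$ in the $e_N$ direction, so that it mimics $-\bar\gamma^{-1}\partial_{x_N}w_{\bar\gamma}(x)=x_N/|x|^{\bar\gamma+2}$. Concretely, for a small parameter $h>0$ to be fixed, I would start from
\[
\Psi_h(x)=\frac{1}{2\bar\gamma h}\bigl[w_{\bar\gamma}(x-he_N)-w_{\bar\gamma}(x+he_N)\bigr],\qquad x\in\RN\setminus\{\pm he_N\}.
\]
A Taylor expansion at infinity yields $\Psi_h(x)=\bigl(1+O(h^2/|x|^2)\bigr)\,x_N/|x|^{\bar\gamma+2}$, which gives the limit in \eqref{3012eq2}, while the sign property \eqref{3012eq3} is immediate from the fact that $|x-he_N|<|x+he_N|$ if and only if $x_N>0$. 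To upgrade $\Psi_h$ to $\psi\in C^2(\RN)\cap L^\infty(\RN)$ I would glue a cubic polynomial in $|x\mp he_N|^2$ inside two small balls around $\pm he_N$, in the spirit of the construction in Proposition \ref{w}, preserving the sign of $x_N$; a mild lower-order additive adjustment can further guarantee the pointwise bound $\psi(x)\ge x_N/|x|^{\bar\gamma+2}$ on $\RN_+\setminus\overline{B_1(0)}$ without spoiling the limit.

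The key mechanism for \eqref{3012eq1} is the subadditivity of $\I^+_k$. Since $\I^+_k w_{\bar\gamma}=0$ off the origin, the bound $\I^+_k(u_1-u_2)\ge\I^+_k u_1-\I^+_k u_2$ applied to $u_i=w_{\bar\gamma}(\cdot\mp he_N)$ gives $\I^+_k\Psi_h(x)\ge0$ for every $x\notin\{\pm he_N\}$. For the smoothed $\psi$ and $|x|$ much larger than $h$, the discrepancy with $\I^+_k\Psi_h(x)$ comes only from the nonlocal footprint of the interpolation near the two poles; since the modification is compactly supported and bounded, its contribution to $\I_\xi$ can be estimated directly from \eqref{eq1} by a quantity of order $|x|^{-1-2s-\bar\gamma}$, and absorbed for $|x|\ge R_0$ with $R_0$ large enough.

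The hard part will be upgrading the weak inequality $\I^+_k\psi\ge0$ to the strict one required by \eqref{3012eq1}. My plan is to add a small perturbation $\epsilon\Psi_{h,\gamma'}$ defined analogously from $w_{\gamma'}(x)=|x|^{-\gamma'}$ with $\gamma'>\bar\gamma$ close to $\bar\gamma$; since $c_k(\gamma')>0$ we have $\I^+_k w_{\gamma'}(y)=C_s c_k(\gamma')|y|^{-\gamma'-2s}$, and subadditivity then yields
\[
\I^+_k\Psi_{h,\gamma'}(x)\ge\frac{C_s c_k(\gamma')}{2\gamma' h}\bigl(|x-he_N|^{-\gamma'-2s}-|x+he_N|^{-\gamma'-2s}\bigr)>0
\]
for every $x\in\RN_+$. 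The subtle point is that subadditivity applied to the sum only gives $\I^+_k(\psi+\epsilon\Psi_{h,\gamma'})\ge\I^+_k\psi+\epsilon\I^-_k\Psi_{h,\gamma'}$, with $\I^-_k\Psi_{h,\gamma'}$ not manifestly positive; to circumvent this I would build the correction inside each translate, replacing $w_{\bar\gamma}(\cdot\mp he_N)$ by $w_{\bar\gamma}(\cdot\mp he_N)+\epsilon w_{\gamma'}(\cdot\mp he_N)$ before differencing, so that the strictly positive $\gamma'$-contribution survives the subadditivity step intact. Choosing $\epsilon>0$ small and $R_0$ large enough then yields \eqref{3012eq1} while preserving \eqref{3012eq2} and \eqref{3012eq3}.
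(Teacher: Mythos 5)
Your starting point is the same as the paper's: realize $\psi$ as (essentially) $-\bar\gamma^{-1}\partial_{x_N}$ of a radial modification of $|x|^{-\bar\gamma}$, use the subadditivity/maximality of $\I^+_k$ to pass from equality for the optimal frame at $x$ to a one-sided inequality, and add a strictly subharmonic perturbation coming from $w_{\gamma'}$ with $\gamma'>\bar\gamma$ so that $c_k(\gamma')>0$ supplies the strict sign. Your insight of putting the $\gamma'$-perturbation \emph{inside} the translates before differencing, so that subadditivity does not ruin the sign, is exactly the correct fix and matches the paper's choice $\psi=-\bar\gamma^{-1}(D_{x_N}v_{\bar\gamma}+D_{x_N}v_\gamma)$. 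So the conceptual skeleton is sound.

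The genuine gap is in the error analysis of the smoothing near the poles $\pm h e_N$, and it is not a minor technicality. Two issues. (a) The claimed order $|x|^{-1-2s-\bar\gamma}$ for the contribution of the interpolated piece has no justification; a compactly supported bounded perturbation $m$ of size $M$ and width $\rho$ contributes, for a direction $\xi$ whose line meets its support, a quantity of order $M\rho\,|x|^{-1-2s}$, with no extra $|x|^{-\bar\gamma}$ gain. (b) More seriously, this error has no $x_N$ factor, while the strictly positive main term you obtained is of size $\epsilon\, c_k(\gamma')\,x_N/|x|^{\gamma'+2s+2}$, which vanishes linearly as $x_N\to0^+$. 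Hence for $x$ with $x_N$ small and $|x|$ large the error dominates, and \eqref{3012eq1} cannot be concluded on the whole of $\RN_+\setminus\overline{B_{R_0}(0)}$: the estimate you would need, $x_N\gtrsim|x|^{\gamma'+1}$, fails near the boundary. Antisymmetrizing the modification around $\{y_N=0\}$ does not repair this, because the lines in the cone around $\hat x$ that meet the modified region pass through it at heights of order $1$ (not of order $x_N/|x|$), so their $m$-integrals do not vanish with $x_N$.

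The paper sidesteps this precisely by smoothing \emph{radially at the origin} first (producing the smooth radial $v_\gamma$, eqs.\ \eqref{3eq11}--\eqref{3012eq5}), and then taking the $x_N$-derivative (formally the $h\to0$ limit of your difference quotient, justified via Lemma \ref{lem1}). Because the modification sits at the origin, the error it produces in $\I^+_k(-D_{x_N}v_\gamma)(x)$ comes as $-C_sf'_\gamma(|x|)\cdot x_N/|x|$ (see \eqref{10mageq3} and the estimate of $-f'_\gamma$), i.e.\ it automatically carries the $x_N$ factor and decays like $x_N/|x|^{2s+3}$; since $\gamma<1$ it is then dominated by the positive $c_k(\gamma)\,x_N/|x|^{\gamma+2s+2}$ for $|x|$ large, uniformly down to the boundary $x_N=0^+$. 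To make your plan work you would need to replace the finite-$h$ central difference of the singular $w_{\bar\gamma}$ by either the actual derivative of the pre-smoothed $v_{\bar\gamma}$ (the paper's route) or a finite difference of $v_{\bar\gamma}$ itself, and then check that the resulting error in $\I^+_k$ is a difference of a \emph{radial} quantity at $x\mp he_N$, which again produces the $x_N$ factor; the version you wrote, with two separated poles and a local gluing around each of them, does not have that structure.
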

\begin{proof}
For $\gamma\in(0,1)$, let $\varphi_\gamma(r)=r^{-\gamma}$  and let $v_\gamma\in C^3(\mathbb\RN)\cap L^\infty(\RN)$ be the function defined by
\begin{equation}\label{3eq11}
v_\gamma(x)=\begin{cases}
\tilde f_\gamma(|x|^2) & \text{if $|x|\leq1$}\\
\varphi_\gamma(|x|) & \text{if $|x|>1$}
\end{cases}
\end{equation}
where for $r\geq0$
\begin{equation}\label{3012eq5}
\tilde f_\gamma(r)=\frac{\varphi'''_{\frac\gamma2}(1)}{6}(r-1)^3+\frac{\varphi''_{\frac\gamma2}(1)}{2}(r-1)^2+\varphi'_{\frac\gamma2}(1)(r-1)+\varphi_{\frac\gamma2}(1).
\end{equation}
By construction, $v_\gamma(x)=\tilde g(|x|^2)$ with both $\tilde g$ and $\tilde g''$ positive and convex functions in $[0,+\infty)$. The representation formula \cite[Theorem 3.4 (i)]{BGT} then applies to $v_\gamma$.

\smallskip

For  $\gamma>\bar\gamma$ fixed, set
\begin{equation}\label{0101eq1}
\psi(x)=-\frac{1}{\bar\gamma}\left(D_{x_N}v_{\bar\gamma}(x)+D_{x_N}v_{\gamma}(x)\right).
\end{equation}
 We claim that for $R_0=R_0(k,s)$ sufficiently large and for any $x\in\RN_+$ with $|x|>R_0$, then there exists a $k$-dimensional orthonormal frame $\left\{\xi_1,\ldots,\xi_k\right\}$, depending on $x$, such that 
\begin{equation}\label{claim}
\sum_{i=1}^k\I_{\xi_i}\psi(x)>0.
\end{equation}
This implies the desired result. Indeed, condition \eqref{3012eq1} is a consequence the maximality of the operator $\I^+_k$, since 
$$
\I^+_k\psi(x)\geq\sum_{i=1}^k\I_{\xi_i}\psi(x)>0 \qquad \forall x\in\RN_+\backslash \overline{B_{R_0}(0)}.
$$
Moreover, for any $|x|>1$, one has
\begin{equation}\label{3012eq4}
\psi(x)=\frac{x_N}{|x|^{\bar\gamma+2}}+\frac{\gamma}{\bar\gamma}\frac{x_N}{|x|^{\gamma+2}}
\end{equation}
and \eqref{3012eq2} follows from the fact that $\gamma>\bar\gamma>0$.\\
As far as \eqref{3012eq3} is concerned, it is clear that it holds for $|x|>1$ in view of \eqref{3012eq4}. If instead $|x|\leq1$, then using \eqref{3012eq5}, we have 
$$
\psi(x)=-\frac{2}{\bar\gamma}\left(\tilde f'_{\bar\gamma}(|x|^2)+\tilde f'_{\gamma}(|x|^2)\right)x_N
$$
with $$\tilde f'_{\bar\gamma}(|x|^2)+\tilde f'_{\gamma}(|x|^2)<0.$$
Hence condition $\eqref{3012eq3}$ is also satisfied for $|x|\leq1$.

\medskip
\noindent
In order to prove \eqref{claim}, we start by computing for $|x|>1$ and $n\in\mathbb N$ the quantity
\begin{equation}\label{3112eq1}
\I^+_kv_\gamma(x)-\I^+_kv_\gamma\left(x+\frac{e_N}{n}\right).
\end{equation}
Then, after multiplying \eqref{3112eq1} by $n$, we will send $n\to+\infty$.\\
Using  \cite[Theorem 3.4 (i)]{BGT} we have
\begin{equation}\label{3112eq2}
\begin{split}
\I^+_kv_\gamma(x)-\I^+_kv_\gamma\left(x+\frac{e_N}{n}\right)&=
\I_{\hat{x}}v_\gamma(x)+(k-1)\I_{x^\perp}v_\gamma(x)\\&\quad-\I_{\widehat{{x+\frac{e_N}{n}}}}v_\gamma\left(x+\frac{e_N}{n}\right)-(k-1)\I_{{(x+\frac{e_N}{n})}^\perp}v_\gamma\left(x+\frac{e_N}{n}\right).
\end{split}
\end{equation}
For $|x|>1$ and $x\in\R^N_+$ it turns out that 
$$
|x+\frac{e_N}{n}+\tau{(x+\frac{e_N}{n})}^\perp|>|x+\tau x^\perp|>1\qquad\forall \tau\in\R,\;n\in\mathbb N.
$$
Since $v_\gamma(x)=|x|^{-\gamma}$ for any $|x|>1$, then 
\begin{equation}\label{2eq3}
\begin{split}
\I_{x^\perp}v_\gamma(x)&=\I_{x^\perp}\phi_\gamma(x)\\
\I_{{(x+\frac{e_N}{n})}^\perp}v_\gamma\left(x+\frac{e_N}{n}\right)&=\I_{{(x+\frac{e_N}{n})}^\perp}\phi_\gamma\left(x+\frac{e_N}{n}\right),
\end{split}
\end{equation}
where $\phi_\gamma:\RN\backslash\left\{0\right\}\mapsto\R$ is the function $\phi_\gamma(x)=\varphi_\gamma(|x|)=|x|^{-\gamma}$.\\ On the other hand, a straightforward computation yields
\begin{equation}\label{2eq4}
\I_{\hat{x}}v_\gamma(x)=\I_{\hat{x}}\phi_\gamma(x)+C_s|x|^{-\gamma-2s}\int \limits_{-1-\frac{1}{|x|}}^{-1+\frac{1}{|x|}}\frac{|x|^\gamma\tilde f_\gamma\left(|x|^2(1+\tau)^2\right)-|1+\tau|^{-\gamma}}{|\tau|^{1+2s}}\,d\tau
\end{equation}
and similarly
\begin{equation}\label{2eq5}
\begin{split}
&\I_{\widehat{x+\frac{e_N}{n}}}v_\gamma\left(x+\frac{e_N}{n}\right)=\I_{\widehat{x+\frac{e_N}{n}}}\phi_\gamma\left(x+\frac{e_N}{n}\right)\\&+C_s|x+\frac{e_N}{n}|^{-\gamma-2s}\int \limits_{-1-\frac{1}{|x+\frac{e_N}{n}|}}^{-1+\frac{1}{|x+\frac{e_N}{n}|}}\frac{|x+\frac{e_N}{n}|^\gamma\tilde f_\gamma\left(|x+\frac{e_N}{n}|^2(1+\tau)^2\right)-|1+\tau|^{-\gamma}}{|\tau|^{1+2s}}\,d\tau.
\end{split}
\end{equation}
Putting together \eqref{3112eq2}-\eqref{2eq3}-\eqref{2eq4}-\eqref{2eq5} and using \cite[Proposition 3.7]{BGT} we infer that 
\begin{equation}\label{2eq6}
\begin{split}
\I^+_kv_\gamma(x)-\I^+_kv_\gamma\left(x+\frac{e_N}{n}\right)
&=c_k(\gamma)\left(|x|^{-\gamma-2s}-|x+\frac{e_N}{n}|^{-\gamma-2s}\right)\\
&\quad+
C_s\left(f_\gamma(|x|)-f_\gamma(|x+\frac{e_N}{n}|)\right)
\end{split}
\end{equation}
where $c_k(\gamma)$ is defined in \eqref{811eq1}-\eqref{811eq2} and $f_\gamma:(1,+\infty)\mapsto\R$ is the function defined by the formula
\begin{equation}\label{11mageq1}
f_\gamma(r)=r^{-\gamma-2s}\int\limits_{-1-\frac{1}{r}}^{-1+\frac{1}{r}}\frac{r^\gamma\tilde f_\gamma\left(r^2(1+\tau)^2\right)-|1+\tau|^{-\gamma}}{|\tau|^{1+2s}}\,d\tau.
\end{equation}
Consider now an orthonormal basis $\xi_1,\ldots,\xi_N$ such that $\xi_1=\hat{x}$.
Using the fact that 
$$
\I^+_kv_\gamma(x)=\I_{\hat{x}}v_\gamma(x)+(k-1)\I_{x^\perp}v_\gamma(x)=\sum_{i=1}^k\I_{\xi_i}v_\gamma(x)
$$and that
\begin{equation*}
\I^+_kv_\gamma\left(x+\frac{e_N}{n}\right)\geq\sum_{i=1}^k\I_{\xi_i}v_\gamma\left(x+\frac{e_N}{n}\right)
\end{equation*}
then, after multiplying both of sides of \eqref{2eq6} by $n$, we obtain the inequality
\begin{equation}\label{10mageq1}
\begin{split}
n\sum_{i=1}^k\left[\I_{\xi_i}v_\gamma(x)-\I_{\xi_i}v_\gamma\left(x+\frac{e_N}{n}\right)\right]&\geq c_k(\gamma)n\left(|x|^{-\gamma-2s}-|x+\frac{e_N}{n}|^{-\gamma-2s}\right)\\&\quad+
C_sn\left(f_\gamma(|x|)-f_\gamma(|x+\frac{e_N}{n}|)\right).
\end{split}
\end{equation}
Sending $n\to+\infty$ and using Lemma \ref{lem1}, we infer that
\begin{equation}\label{10mageq3}
\sum_{i=1}^k\I_{\xi_i}\left(-D_{x_N}v_\gamma\right)(x)\geq(\gamma+2s)c_k(\gamma)\frac{x_N}{|x|^{\gamma+2s+2}}-C_sf_\gamma'(|x|)\frac{x_N}{|x|}.
\end{equation}
Now we  estimate from below the right-hand side of \eqref{10mageq3}.

In view of \eqref{11mageq1} and using the fact that $\tilde f_\gamma(1)=1$, we have
$$
f_\gamma'(|x|)=-(\gamma+2s)|x|^{-\gamma-2s-1}\int\limits_{-1-\frac{1}{|x|}}^{-1+\frac{1}{|x|}}\frac{\tilde h_\gamma(|x|,\tau)}{|\tau|^{1+2s}}\,d\tau
$$
where
\begin{equation}\label{11mageq2}
\begin{split}
\tilde h_\gamma(|x|,\tau)&=(1-\frac{\gamma}{\gamma+2s})|x|^\gamma\tilde f_\gamma\left(|x|^2(1+\tau)^2\right)\\&\quad-\frac{2}{\gamma+2s}|x|^{\gamma+2}(1+\tau)^2\tilde f_\gamma'\left(|x|^2(1+\tau)^2\right)-|1+\tau|^{-\gamma}.
\end{split}
\end{equation}
 Since  $\tilde f_\gamma$ and $-\tilde f_\gamma'$ are both nonnegative in the interval $[0,1]$, we infer form \eqref{11mageq2} that
$$
\tilde h_\gamma(|x|,\tau)\geq-|1+\tau|^{-\gamma} \qquad\forall\tau\in\left[-1-\frac{1}{|x|},-1+\frac{1}{|x|}\right].
$$  
Hence
\begin{equation*}
\begin{split}
- f'_\gamma(|x|)&\geq-(\gamma+2s)|x|^{-\gamma-2s-1}\int\limits_{-1-\frac{1}{|x|}}^{-1+\frac{1}{|x|}}\frac{|1+\tau|^{-\gamma}}{|\tau|^{1+2s}}\,d\tau\\
&\geq-(\gamma+2s)\frac{|x|^{-\gamma-2s-1}}{\left(1-\frac{1}{|x|}\right)^{1+2s}}\int\limits_{-1-\frac{1}{|x|}}^{-1+\frac{1}{|x|}}|1+\tau|^{-\gamma}\,d\tau\\
&=-\frac{2(\gamma+2s)}{1-\gamma}\frac{1}{\left(1-\frac{1}{|x|}\right)^{1+2s}}\frac{1}{|x|^{2+2s}}\,.
\end{split}
\end{equation*}
Using \eqref{10mageq3} we deduce that
\begin{equation}\label{10mageq4} 
\sum_{i=1}^k\I_{\xi_i}\left(-D_{x_N}v_\gamma\right)(x)\geq (\gamma+2s)c_k(\gamma)\frac{x_N}{|x|^{\gamma+2s+2}}-\frac{2C_s(\gamma+2s)}{1-\gamma}\frac{1}{\left(1-\frac{1}{|x|}\right)^{1+2s}}\frac{x_N}{|x|^{2s+3}}.
\end{equation}
Since the above estimate holds for any $\gamma\in(0,1)$, in particular for $\gamma=\bar\gamma$ we also have
\begin{equation}\label{0101eq2} 
\sum_{i=1}^k\I_{\xi_i}\left(-D_{x_N}v_{\bar\gamma}\right)(x)\geq -\frac{2C_s(\bar\gamma+2s)}{1-\bar\gamma}\frac{1}{\left(1-\frac{1}{|x|}\right)^{1+2s}}\frac{x_N}{|x|^{2s+3}},
\end{equation}
where we have used that $c_k(\bar\gamma)=0$.\\
Now we fix $\gamma\in(\bar\gamma,1)$  and let $\psi$ be the function defined in \eqref{0101eq1}. Using the linearity of the operator $\sum_{i=1}^k\I_{\xi_i}u(x)$ in $u$, we infer from \eqref{10mageq4}-\eqref{0101eq2}  that 
\begin{equation*}
\begin{split}
\sum_{i=1}^k\I_{\xi_i}\psi(x)&\geq \frac{(\gamma+2s)c_k(\gamma)}{\bar\gamma}\frac{x_N}{|x|^{\gamma+2s+2}}-\frac{2C_s(\gamma+2s)}{(1-\gamma)\bar\gamma}\frac{1}{\left(1-\frac{1}{|x|}\right)^{1+2s}}\frac{x_N}{|x|^{2s+3}}\\
&\quad-\frac{2C_s(\bar\gamma+2s)}{(1-\bar\gamma)\bar\gamma}\frac{1}{\left(1-\frac{1}{|x|}\right)^{1+2s}}\frac{x_N}{|x|^{2s+3}}
\\&=\frac{\gamma+2s}{\bar\gamma}\frac{x_N}{|x|^{\gamma+2s+2}}\left[c_k(\gamma)-\frac{C(\gamma,\bar\gamma,s)}{\left(1-\frac{1}{|x|}\right)^{1+2s}}\frac{1}{|x|^{1-\gamma}}\right]
\end{split}
\end{equation*}
where $C(\gamma,\bar\gamma,s)$ is a positive constant just depending on $\gamma,\bar\gamma,s$. Since $c_k(\gamma)>0$ then we can pick $R_0$ sufficiently large such that 
\begin{equation}\label{10mageq5}
\sum_{i=1}^k\I_{\xi_i}\psi(x)\geq\frac{c_k(\gamma)(\gamma+2s)}{2\bar\gamma}\frac{x_N}{|x|^{\gamma+2s+2}} \quad\forall x\in\RN_+\backslash \overline{B_{R_0}(0)}.
\end{equation}
This implies the desired result.
\end{proof}

Using the strict subsolution $\psi$  provided by Proposition \ref{propSubsol}, we obtain a lower bound for the decay rate of nonnegative supersolutions of the homogeneous equation in the halfspace $\RN_+$.

\begin{proposition}\label{decayestimate}
Let $u\in LSC\left(\overline{\RN_+}\right)\cap\mathcal S$ be a supersolution of
\begin{equation}\label{3eq1}
\I^+_ku(x)=0\qquad\text{in $\RN_+$}
\end{equation}
such that 
$$
\text{$u>0$\; in $\overline{\RN_+}\quad$ and $\quad u\geq0$\; in $\RN$.} 
$$
Then there exists a positive constant $c=c(u,k,s)$  such that 
\begin{equation}\label{3eq8}
u(x)\geq c\,\frac{x_N}{|x|^{\bar\gamma+2}} \qquad\forall x \in\RN_+\backslash\overline{B_1(0)}.
\end{equation}
\end{proposition}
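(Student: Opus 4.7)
The plan is to use the strict subsolution $\psi$ supplied by Proposition \ref{propSubsol} as a one-sided barrier, and to compare $u$ with $c\psi$ for a small positive constant $c$ via the global comparison property \eqref{well}. Once $u \geq c\psi$ is established on $\R^N_+$, the lower bound \eqref{3eq8} is immediate from property \eqref{3012eq2}.

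\textbf{Choice of $c$.} Since $u\in LSC(\overline{\R^N_+})$ with $u>0$ on the closed half-space, its restriction to the compact set $K:=\overline{\R^N_+}\cap\overline{B_{R_0}(0)}$ attains a positive minimum $m>0$. The continuous function $\psi$ is bounded on $K$, so one can pick $c=c(u,k,s)>0$ so small that $c\psi\le u$ on $K$.

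\textbf{Global comparison $u\ge c\psi$ on $\R^N_+$.} Suppose, for contradiction, that $\inf_{\R^N_+}(u-c\psi)<0$. On $\partial\R^N_+$ one has $\psi=0$ and $u\ge 0$, on $K$ one has $u\ge c\psi$ by construction, and as $|x|\to\infty$ property \eqref{3012eq2} gives $\psi(x)\to 0$ while $u\ge 0$, so $\liminf_{|x|\to\infty}(u-c\psi)\ge 0$. By lower semicontinuity the infimum is therefore attained at some point $x_0\in\R^N_+\setminus\overline{B_{R_0}(0)}$. Define the $C^2$ function
\[
\tilde\varphi(x):=c\psi(x)+u(x_0)-c\psi(x_0),
\]
which touches $u$ from below at $x_0$ on $\R^N_+$; moreover for $x\notin\R^N_+$ we have $c\psi(x)\le 0$ and $u(x_0)-c\psi(x_0)<0$, whence $\tilde\varphi(x)\le 0\le u(x)$. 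Thus $\tilde\varphi\le u$ on the whole of $\R^N$ with equality at $x_0$. Picking $r>0$ so small that $B_r(x_0)\subset\R^N_+\setminus\overline{B_{R_0}(0)}$, $\tilde\varphi$ is an admissible test function in Definition \ref{defviscsol}, and the corresponding $\Psi$ (equal to $\tilde\varphi$ on $B_r(x_0)$ and to $u$ elsewhere) satisfies $\Psi\ge\tilde\varphi$ on $\R^N$ with equality at $x_0$. The global comparison property \eqref{well} yields
\[
\I^+_k\Psi(x_0)\ge \I^+_k\tilde\varphi(x_0)=c\,\I^+_k\psi(x_0)>0,
\]
where we used that $\I^+_k$ is invariant under addition of constants, positively $1$-homogeneous, and strictly positive at $x_0$ by \eqref{3012eq1}. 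This contradicts $\I^+_k\Psi(x_0)\le 0$, which is the supersolution inequality for $u$.

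\textbf{Conclusion.} Hence $u\ge c\psi$ on $\R^N_+$, and combining with the lower bound $\psi(x)\ge x_N/|x|^{\bar\gamma+2}$ from \eqref{3012eq2} valid for $|x|>1$ gives \eqref{3eq8}. The only delicate point is ensuring that the negative infimum, if it existed, is actually attained in the interior: this is where the boundary behavior $\psi|_{\partial\R^N_+}=0$, the decay $\psi(x)\to 0$ at infinity, and the initial calibration on $K$ are all used. Everything else is a routine application of \eqref{well} and Proposition \ref{propSubsol}.
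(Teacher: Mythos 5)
Your proof is correct, and the overall strategy---comparing $u$ with $c\psi$ where $\psi$ is the strict subsolution from Proposition~\ref{propSubsol}, then contradicting the supersolution inequality at an interior minimum---is the same as the paper's. The difference lies in how the interior contact point is produced. The paper introduces the shift $u_\varepsilon=u+\varepsilon$ and then, for each $\varepsilon$, confines the would-be negative minimum of $u_\varepsilon-c\psi$ to the \emph{bounded} annulus $\{R_0<|x|<R_1(\varepsilon)\}\cap\R^N_+$ (choosing $R_1(\varepsilon)$ so that $c\psi\le\varepsilon\le u_\varepsilon$ for $|x|\ge R_1$); since $u_\varepsilon\ge\varepsilon>0=c\psi$ on the bounding hyperplane, compactness gives an interior minimizer immediately, and one passes to the limit $\varepsilon\to0^+$ at the end. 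You instead argue that the infimum of $u-c\psi$ over all of $\R^N_+$ is attained directly, using three ingredients: $u-c\psi\ge0$ on $K$, $\liminf_{|x|\to\infty}(u-c\psi)\ge0$, and the fact that any bounded minimizing sequence approaching $\partial\R^N_+$ has $\liminf(u-c\psi)\ge u(y)\ge 0$ by lower semicontinuity of $u$ and $\psi|_{\partial\R^N_+}=0$. This works, but it leans a bit harder on the hypotheses $u\in LSC(\overline{\R^N_+})$ and $u\ge 0$ on $\partial\R^N_+$ at a point where the paper's $\varepsilon$-regularisation makes the compactness entirely painless; your phrase ``by lower semicontinuity the infimum is therefore attained'' compresses precisely the boundary sub-argument that deserves a sentence of its own. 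Apart from that, the choice of $c$, the verification that $\tilde\varphi\le u$ also for $x_N\le0$ via $\psi\le0$ there, the construction of the test function, and the use of the global comparison property together with $\I^+_k\psi(x_0)>0$ are all sound and match the paper's reasoning.
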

\begin{proof}
Let $\psi=\psi(x)$ and $R_0>1$ be given by Proposition \ref{propSubsol}. Set
$$
c=\inf_{x\in \overline{B_{R_0}(0)}\cap\RN_+}\frac{u(x)}{\psi(x)}
$$
which  is obviously a positive quantity since $\psi\equiv0$ on $\partial\RN_+$.

For $\varepsilon>0$ consider the function $u_\varepsilon(x)=u(x)+\varepsilon$ which is in turn a supersolution of \eqref{3eq1}. Set $R_1=\left(\frac{c}{\varepsilon}(1+\frac{\gamma}{\bar\gamma})\right)^{\frac{1}{\bar\gamma+1}}$. For $\varepsilon$ small enough we have $R_1>R_0$. Using \eqref{3012eq4}, for any   $x\in\mathbb R^N_+$ such that $|x|\geq R_1$ we obtain
\begin{equation}\label{3eq2}
c\psi(x)\leq c\left(1+\frac{\gamma}{\bar\gamma}\right)\frac{1}{|x|^{\bar\gamma+1}}\leq c\left(1+\frac{\gamma}{\bar\gamma}\right)\frac{1}{{R_1}^{\bar\gamma+1}}=\varepsilon<u_\varepsilon(x).
\end{equation}
By the definition of $c$ we also have
 \begin{equation}\label{3eq3}
u_\varepsilon(x)> u(x)\geq c\psi(x)\qquad\forall x\in \overline{B_{R_0}(0)}\cap\RN_+.
\end{equation}
Moreover
\begin{equation}\label{3eq4}
u_\varepsilon(x)> u(x)\geq0\geq c\psi(x)\qquad\forall x\in\overline{\RN_-}.
\end{equation}
We claim that $$u_\varepsilon(x)\geq c\psi(x)\qquad \forall x\in B_{R_1}(0)\cap\RN_+.$$ If not, in view of \eqref{3eq2}-\eqref{3eq3}-\eqref{3eq4}, there exists $x_\varepsilon\in\left\{x\in\RN_+:\,R_0<|x|<R_1\right\}$ such that 
$$
u_\varepsilon(x_\varepsilon)-c\psi(x_\varepsilon)=\min_{x\in\overline{\left\{x\in\RN_+:\,R_0<|x|<R_1\right\}}}(u_\varepsilon-c\psi)=\min_{x\in\RN}(u_\varepsilon-c\psi)<0.
$$
Hence testing $u_\varepsilon$ at $x_\varepsilon$ by using the function $c\psi(x)$, we infer that $$\I^+_k\psi(x_\varepsilon)\leq0,$$ so contradicting the strict inequality \eqref{3012eq1}.

\smallskip

Thus, $u(x)+\varepsilon\geq c\psi(x)$ for any $x\in \RN_+$. Letting $\varepsilon\to0^+$ we arrive at $$u(x)\geq c\psi(x)\qquad\forall x\in\RN_+,$$ which conclude the proof since, by \eqref{3012eq2}, $\psi(x)\geq\frac{x_N}{|x|^{\bar\gamma+2}}$ for any $x\in\RN_+$ such that $|x|>1$.
\end{proof}

\begin{proof}[Proof of Theorem \ref{th3} (case $p>1$)] 
By contradiction let us suppose that $u$ is a nontrivial solution of \eqref{eq21}. In view of the strong maximum principle, see e.g. \cite[Proposition 4.7 (i)]{BGS}, $u$ is in fact strictly positive in $\RN_+$. Moreover we can further assume that $u(x)>0$ for any $x\in\overline{\RN_+}$,  replacing if necessary $u(x)$ by $u_1(x)=u(x+e_N)$ which is in turn a solution \eqref{eq21}.

\smallskip
\noindent
In view of Proposition \ref{decayestimate} we infer that 
$$
M_1(R)=\min_{\overline{B_R(5Re_N)}} u\geq \frac{c}{R^{\bar\gamma+1}}\qquad\forall R\geq1,
$$
$c=c(u,k,s)$ being a positive constant.  Combining this estimate with the inequality  \eqref{0111peq3}, we obtain
\begin{equation*}
1\leq C R^{(\bar\gamma+1)(p-1)-2s} \qquad\forall R\geq1,
\end{equation*}
for some $C>0$. This leads to a  contradiction for $R\to+\infty$ since $(\bar\gamma+1)(p-1)-2s<0$.
\end{proof}

\bigskip

Now we are concerned with Theorem \ref{th4}. The main difference with respect to Theorem \ref{th3} is the lack of a fundamental solution, bounded at infinity, for the operator $\I^+_1$ whenever $s\in\left[\frac12,1\right)$. In fact, the function $w_\gamma(x)=|x|^{-\gamma}$ is now, for any $\gamma\in(0,1)$, only a strict subsolution of the homogeneous equation $\I^+_1u=0$ in $\RN\backslash\left\{0\right\}$. This subsolution property is however sufficient to prove that $u(x)=0$ is the unique solution of \eqref{eq21} provided $p<1+2s$. Nevertheless, this range of $p$ can be improved, that is $p<\frac{1}{1-s}$, using the fact that if $s>\frac12$, then the function $w(x)=-|x|^{2s-1}$ is solution (unbounded at infinity) for $\I^+_1$ in $\RN\backslash\left\{0\right\}$. 

\smallskip

We shall treat  the cases $s=\frac12$ and $s>\frac12$ separately and show how to adapt the arguments used in the proof of Theorem \ref{th3} to this setting.

\subsubsection{The case  $\I^+_1$ with $s=\frac12$}

\begin{proposition}\label{propSubsol2}
Let $\gamma\in(0,1)$. Then there exist a constant $R_0>1$ and a function $\psi\in C^2(\RN)\cap L^\infty(\RN)$, depending on $\gamma$  and $s$, such that 
\begin{equation*}
\I^+_1\psi(x)>0 \qquad \forall x\in\RN_+\backslash \overline{B_{R_0}(0)}
\end{equation*}
\begin{equation*}
\psi(x)=\frac{x_N}{|x|^{\gamma+2}} \;\quad\forall x\in\RN_+\backslash \overline{B_1(0)}
\end{equation*}
\begin{equation*}
\psi(x)>0\quad\text{if\; $x_N>0$},\;\;\;\psi(x)=0\quad\text{if\; $x_N=0$},\;\;\;\psi(x)<0 \quad\text{if\; $x_N<0$}.
\end{equation*}
\end{proposition}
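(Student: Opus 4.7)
The plan is to imitate the proof of Proposition \ref{propSubsol} with $k=1$, but using the \emph{single-term} ansatz
\[
\psi(x)\;:=\;-\frac{1}{\gamma}\,D_{x_N}v_\gamma(x),
\]
where $v_\gamma$ is the regularisation of $|x|^{-\gamma}$ defined in \eqref{3eq11}--\eqref{3012eq5}. Because $v_\gamma\in C^3(\R^N)$ has bounded derivatives up to order three, $\psi\in C^2(\R^N)\cap L^\infty(\R^N)$. For $|x|>1$ a direct computation gives $\psi(x)=x_N/|x|^{\gamma+2}$ exactly (not just asymptotically, as was the case in Proposition \ref{propSubsol}); no second term $D_{x_N}v_{\bar\gamma}$ is needed because in the present regime $k=1$, $s=\tfrac12$ no exponent $\bar\gamma\in(0,1)$ with $\hat c(\bar\gamma)=0$ is available to balance. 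For $|x|\leq 1$ we have $\psi(x)=-(2/\gamma)\tilde f'_\gamma(|x|^2)x_N$, and writing out \eqref{3012eq5} with $\mu=\gamma/2$ yields
\[
\tilde f'_\gamma(r)=-\mu\Bigl[\tfrac{(\mu+1)(\mu+2)}{2}(r-1)^2-(\mu+1)(r-1)+1\Bigr],
\]
whose bracket is strictly positive on $[0,1]$ (constant term $1$, the other summands nonnegative there). Hence $\tilde f'_\gamma<0$ on $[0,1]$ and the desired sign trichotomy for $\psi$ follows.

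The heart of the proof is the strict subsolution property, established by replaying the $k=1$ line of the computation in Proposition \ref{propSubsol}. For $|x|>1$ and $x\in\R^N_+$ I would use the radial representation $\I^+_1v_\gamma(y)=\I_{\hat y}v_\gamma(y)$ provided by \cite[Theorem 3.4 (i)]{BGT}, split the fractional integral into an ``outer" part (both arguments outside $B_1$) and an ``inner" correction $f_\gamma(|x|)$, and arrive at the analogue of \eqref{2eq6}:
\[
\I^+_1v_\gamma(x)-\I^+_1v_\gamma\!\left(x+\tfrac{e_N}{n}\right)=\hat c(\gamma)\bigl(|x|^{-\gamma-2s}-|x+\tfrac{e_N}{n}|^{-\gamma-2s}\bigr)+C_s\bigl(f_\gamma(|x|)-f_\gamma(|x+\tfrac{e_N}{n}|)\bigr).
\]
Bounding $\I^+_1v_\gamma(x+\tfrac{e_N}{n})$ below by $\I_{\hat x}v_\gamma(x+\tfrac{e_N}{n})$, multiplying through by $n$ and letting $n\to\infty$ via Lemma \ref{lem1}(i) -- which applies since $v_\gamma$ is bounded, with no restriction on $s$ -- together with the purely algebraic pointwise bound on $-f'_\gamma(|x|)$ from the proof of Proposition \ref{propSubsol}, delivers
\[
\I^+_1\psi(x)\;\geq\;\I_{\hat x}\psi(x)\;\geq\;\frac{(\gamma+2s)\,x_N}{\gamma\,|x|^{\gamma+2s+2}}\Bigl[\hat c(\gamma)-\frac{C(\gamma,s)}{(1-1/|x|)^{1+2s}\,|x|^{1-\gamma}}\Bigr].
\]

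The decisive new input which makes the one-term ansatz succeed is that for $s=\tfrac12$ the function $\hat c$ is \emph{strictly positive} on the whole interval $(0,1)$: this is equivalent to the nonexistence of $\bar\gamma$ for $k=1$ and $s\in[\tfrac12,1)$, and is exactly the content of the introductory remark that $|x|^{-\gamma}$ is a strict subsolution of $\I^+_1u=0$ in $\R^N\setminus\{0\}$ in this regime. With $\hat c(\gamma)>0$ in hand, the bracket above exceeds $\hat c(\gamma)/2$ whenever $|x|\geq R_0$ for a sufficiently large $R_0=R_0(\gamma,s)$, so $\I^+_1\psi(x)>0$ on $\R^N_+\setminus\overline{B_{R_0}(0)}$, completing the construction. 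The only real obstacle is bookkeeping: one must verify that each estimate transferred from Proposition \ref{propSubsol}, and in particular the integrability of the fractional kernel at the critical value $s=\tfrac12$, remains valid; this is ensured by invoking Lemma \ref{lem1}(i) (bounded test function) rather than (ii), which would require $s>\tfrac12$.
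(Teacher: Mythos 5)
Your proof is correct and follows essentially the same route as the paper: you take the single-term ansatz $\psi=-\tfrac{1}{\gamma}D_{x_N}v_\gamma$, invoke the $k=1$ case of inequality \eqref{10mageq4} (via Lemma \ref{lem1}(i), valid for bounded $v_\gamma$ at $s=\tfrac12$), and close using $\hat c(\gamma)>0$ for all $\gamma\in(0,1)$ in this regime. The only minor additions beyond the paper's argument are the explicit verification that $\tilde f'_\gamma<0$ on $[0,1]$ and a slightly longer phrasing of why $\hat c>0$; both are harmless.
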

\begin{proof}
Set 
$$
\psi(x)=-\frac1\gamma D_{x_N}v_\gamma(x),
$$
being $v_\gamma\in C^3(\RN)\cap L^\infty(\RN)$ the radial function defined by \eqref{3eq11}-\eqref{3012eq5}. The inequality \eqref{10mageq4}, in the case $k=1$, reads as
$$
\I_{\hat x}\psi(x)\geq \frac{(\gamma+2s)\hat c(\gamma)}{\gamma}\frac{x_N}{|x|^{\gamma+2s+2}}-\frac{2C_s(\gamma+2s)}{(1-\gamma)\gamma}\frac{1}{\left(1-\frac{1}{|x|}\right)^{1+2s}}\frac{x_N}{|x|^{2s+3}}
 $$
with $\hat c(\gamma)$ defined in \eqref{811eq2}. Hence, we have
$$
\I^+_1\psi(x)\geq\frac{(\gamma+2s)}{\gamma}\frac{x_N}{|x|^{\gamma+2s+2}}\left[\hat c(\gamma)-\frac{2C_s}{(1-\gamma){\left(1-\frac{1}{|x|}\right)}^{1+2s}}\frac{1}{|x|^{1-\gamma}}\right].
$$
Since $\hat c(\gamma)>0$,  by choosing $R_0=R_0(\gamma,s)$ sufficiently large,  we conclude that 
\begin{equation*}
\I^+_1\psi(x)\geq\frac{\hat c(\gamma)(\gamma+2s)}{2\gamma}\frac{x_N}{|x|^{\gamma+2s+2}} \quad\forall x\in\RN_+\backslash \overline{B_{R_0}(0)}.
\end{equation*}
\end{proof} 

Using the subsolution $\psi$ provided by Proposition \ref{propSubsol2} and arguing as in the proof of Proposition \ref{decayestimate}, we obtain the following

\begin{proposition}\label{decayestimate2}
Let $u\in LSC\left(\overline{\RN_+}\right)\cap\mathcal S$ be a supersolution of
\begin{equation*}
\I^+_1u(x)=0\qquad\text{in $\RN_+$}
\end{equation*}
such that 
$$
\text{$u>0$\; in $\overline{\RN_+}\quad$ and $\quad u\geq0$\; in $\RN$.}
$$
Then for any $\gamma\in(0,1)$, there exists a positive constants $c=c(u,\gamma,s)$  such that 
\begin{equation*}
u(x)\geq c\,\frac{x_N}{|x|^{\gamma+2}} \qquad\forall x \in\RN_+\backslash\overline{B_1(0)}.
\end{equation*}
\end{proposition}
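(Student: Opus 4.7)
The plan is to mimic, almost verbatim, the comparison argument from the proof of Proposition \ref{decayestimate}, using, for each fixed $\gamma\in(0,1)$, the strict subsolution $\psi$ now supplied by Proposition \ref{propSubsol2}.

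First I would fix $\gamma\in(0,1)$, let $(\psi,R_0)$ be as in Proposition \ref{propSubsol2}, and set
$$
c:=\inf_{x\in \overline{B_{R_0}(0)}\cap\RN_+}\frac{u(x)}{\psi(x)}.
$$
To check $c>0$, I would note that $\psi$ is continuous on $\overline{B_{R_0}(0)}\cap\overline{\RN_+}$, strictly positive off the hyperplane $\{x_N=0\}$ and vanishing linearly in $x_N$ at the boundary, while $u\in LSC(\overline{\RN_+})$ is uniformly bounded below by a positive constant on the same compact set. Hence $u/\psi$ blows up near $\partial\RN_+$ and is comparable to a positive constant away from it, so the infimum is attained and strictly positive.

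Next, for $\varepsilon>0$ I would work with the perturbation $u_\varepsilon:=u+\varepsilon$, still a viscosity supersolution of $\I^+_1 u=0$ in $\RN_+$. Using the explicit formula $\psi(x)=x_N/|x|^{\gamma+2}$ for $|x|>1$, so that $\psi(x)\leq |x|^{-(\gamma+1)}$ there, I would pick $R_1=R_1(\varepsilon)$ of order $\varepsilon^{-1/(\gamma+1)}$ so that $c\psi(x)<\varepsilon\leq u_\varepsilon(x)$ whenever $|x|\geq R_1$; for small $\varepsilon$ this forces $R_1>R_0$. By the defining property of $c$ and the sign conditions on $\psi$ from Proposition \ref{propSubsol2}, one also has $u_\varepsilon>u\geq c\psi$ on $\overline{B_{R_0}(0)}\cap\RN_+$ and $u_\varepsilon\geq\varepsilon>0\geq c\psi$ on $\overline{\RN_-}$. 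Consequently, if $\inf_{\RN}(u_\varepsilon-c\psi)$ were strictly negative, the lower semicontinuity of $u_\varepsilon-c\psi$ and the above sign information would force the minimum to be attained at some $x_\varepsilon$ in the open region $\{R_0<|x|<R_1\}\cap\RN_+$; testing $u_\varepsilon$ from below by $c\psi$ at $x_\varepsilon$ would then yield $\I^+_1\psi(x_\varepsilon)\leq 0$, contradicting the strict subsolution inequality in Proposition \ref{propSubsol2}. Hence $u_\varepsilon\geq c\psi$ everywhere in $\RN$, and letting $\varepsilon\to 0^+$, together with the explicit form of $\psi$ for $|x|>1$, gives the claimed pointwise lower bound.

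Conceptually there is no new obstacle beyond the proof of Proposition \ref{decayestimate}: the only moving part is that the distinguished exponent $\bar\gamma$ is replaced by an arbitrary $\gamma\in(0,1)$, and what makes this replacement admissible is precisely the positivity of $\hat c(\gamma)$ for every $\gamma\in(0,1)$ in the regime $s\geq\tfrac12$, which is already built into Proposition \ref{propSubsol2}. The only delicate bookkeeping point is guaranteeing that $R_1$ can be selected with $c\psi(x)<\varepsilon$ for $|x|\geq R_1$, and this rests on the decay rate $\psi(x)=O(|x|^{-\gamma-1})$ at infinity.
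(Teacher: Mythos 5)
Your proof is correct and follows precisely the route the paper intends: it transplants the comparison argument from the proof of Proposition \ref{decayestimate} verbatim, with the subsolution of Proposition \ref{propSubsol} replaced by the one from Proposition \ref{propSubsol2} (the only genuine simplification being that here $\psi(x)=x_N/|x|^{\gamma+2}$ for $|x|>1$ consists of a single term, so the bookkeeping for choosing $R_1$ is slightly cleaner). The paper itself does not write out this proof but merely points to Proposition \ref{decayestimate}, and you have reproduced that argument faithfully, including the perturbation $u_\varepsilon=u+\varepsilon$, the localization of the contradicting minimum in the annulus $\{R_0<|x|<R_1\}\cap\RN_+$, and the passage $\varepsilon\to 0^+$.
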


\begin{proof}[Proof of Theorem \ref{th4}  (case $s=\frac12$)] Thanks to Proposition \ref{1novprop1}, we only need to treat the case $p>1$. 
Suppose by contradiction that $u$ is a nontrivial solution of \eqref{eq21}. As in the proof of Theorem \ref{th3} we may further assume that $u$ is positive in $\overline\RN_+$ and, using \eqref{0111peq3} and Proposition \ref{decayestimate2}, that for any $\gamma\in(0,1)$ 
\begin{equation}\label{911eq1}
1\leq C R^{(\gamma+1)(p-1)-1}\qquad\forall R\geq1
\end{equation}
with $C$ positive constant depending also on $\gamma$. Now, since $1<p<\frac{1}{1-s}=2$, we can pick $\gamma\in(0,1)$ such that $(\gamma+1)(p-1)-1<0$, leading to a contradiction in \eqref{911eq1} for sufficiently large $R$.
\end{proof}

\subsubsection{The case $\I^+_1$ with $\frac12<s<1$}

Let  $\bar\gamma=2s-1$ and set for $\gamma\in(0,\bar\gamma]$
$$
w_{-\gamma}(x)=-|x|^{\gamma}\,,\qquad x\neq0.
$$
By \cite[Theorem 3.4 and Lemma 3.6]{BGT}, for any $x\neq0$, it holds that
$$
\I^+_1w_{-\gamma}(x)=\I_{\hat x}w_{-\gamma}(x)=\hat c(\gamma)|x|^{\gamma-2s}
$$
where 
$$
\hat c(\gamma)=C_s\text{P.V.}\,\int\limits_{-\infty}^{+\infty}\frac{1-|1+\tau|^{\gamma}}{|\tau|^{1+2s}}\,d\tau
$$
and $\hat c(\gamma)>0$ whenever $\gamma<\bar\gamma$ and $\hat c(\bar\gamma)=0$.

In a similar way as \eqref{3eq11}-\eqref{3012eq5}, we set for $x\in\RN$
\begin{equation}\label{0601eq3}
v_{-\gamma}(x)=\tilde g(|x|^2),
\end{equation}
where $\tilde g\in C^3([0,+\infty))$ is such that 
\begin{equation}\label{0601eq4}
\begin{cases}
\tilde g(r)=-r^{\frac\gamma2} & \text{if $r\geq1$}\\
\tilde g(r)\leq-r^{\frac\gamma2} & \text{if $r\in[0,1]$}\\
\text{$\tilde g(r)$ and $\tilde g''(r)$} & \text{are convex in $[0,+\infty)$.} 
\end{cases}
\end{equation}
Notice that $v_{-\gamma}\in C^3(\RN)$ and all the derivatives, up to order 3, of $v_{-\gamma}$ are bounded in $\RN$ but, differently from \eqref{3eq11}-\eqref{3012eq5}, the function $v_{-\gamma}$ is not bounded in $\RN$.

\begin{proposition}\label{propSubsol3}
There exist a constant $R_0>1$ and a function $\psi\in C^2(\RN)\cap L^\infty(\RN)$, depending on  $s$, such that 
\begin{equation}\label{0601eq5}
\I^+_1\psi(x)>0 \qquad \forall x\in\RN_+\backslash \overline{B_{R_0}(0)}
\end{equation}
\begin{equation}\label{0601eq6}
\psi(x)\geq\frac{x_N}{|x|^{3-2s}} \;\quad\forall x\in\RN_+\backslash \overline{B_1(0)}\,\quad{and}\,\quad\lim_{\stackrel{|x|\to+\infty}{x_N\neq0}
}\frac{\psi(x)}{\frac{x_N}{|x|^{3-2s}}}=1
\end{equation}
\begin{equation}\label{0601eq7}
\psi(x)>0\quad\text{if\; $x_N>0$},\;\;\;\psi(x)=0\quad\text{if\; $x_N=0$},\;\;\;\psi(x)<0 \quad\text{if\; $x_N<0$}.
\end{equation}
\end{proposition}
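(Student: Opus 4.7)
The plan is to adapt the construction of Proposition \ref{propSubsol}, using the unbounded radial function $v_{-\bar\gamma}$ of \eqref{0601eq3}--\eqref{0601eq4} in place of the bounded $v_{\bar\gamma}$, and exploiting the assumption $s>\tfrac12$ so that Lemma \ref{lem1}(ii) becomes applicable to the unbounded building blocks. Fix any $\gamma\in(0,\bar\gamma)$, where $\bar\gamma=2s-1$, and set
\[
\psi(x):=-\frac{1}{\bar\gamma}D_{x_N}v_{-\bar\gamma}(x)-\frac{1}{\gamma}D_{x_N}v_{-\gamma}(x).
\]
For $|x|\geq 1$ we have $v_{-\mu}(x)=-|x|^\mu$, hence $\psi(x)=\frac{x_N}{|x|^{3-2s}}+\frac{x_N}{|x|^{2-\gamma}}$; since $\gamma<\bar\gamma$ forces $2-\gamma>3-2s$, the second summand decays strictly faster, yielding \eqref{0601eq6}. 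For $|x|<1$, writing $v_{-\mu}(x)=\tilde g_{-\mu}(|x|^2)$ with $\tilde g_{-\mu}$ convex on $[0,\infty)$ and equal to $-r^{\mu/2}$ on $[1,\infty)$, convexity gives $\tilde g_{-\mu}'(r)\le\tilde g_{-\mu}'(1)=-\mu/2<0$ throughout $[0,1]$, so $\psi(x)=-\tfrac{2x_N}{\bar\gamma}\tilde g_{-\bar\gamma}'(|x|^2)-\tfrac{2x_N}{\gamma}\tilde g_{-\gamma}'(|x|^2)$ has the same sign as $x_N$, establishing \eqref{0601eq7}.

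For the strict subsolution property \eqref{0601eq5} it suffices, since $\mathcal{I}^+_1\psi(x)\ge\mathcal{I}_{\hat x}\psi(x)$, to show $\mathcal{I}_{\hat x}\psi(x)>0$ for all $|x|$ large. The key step, carried out separately for each $\mu\in\{\gamma,\bar\gamma\}$, mimics the finite-difference argument of Proposition \ref{propSubsol}: using $\mathcal{I}^+_1v_{-\mu}(x)=\mathcal{I}_{\hat x}v_{-\mu}(x)$ (since $\hat c(\mu)\ge 0$ for $\mu\le\bar\gamma$ while the perpendicular one-dimensional contributions are strictly negative) together with $\mathcal{I}^+_1v_{-\mu}(x+e_N/n)\ge\mathcal{I}_{\hat x}v_{-\mu}(x+e_N/n)$, an explicit formula modelled on \eqref{2eq4}--\eqref{2eq6} isolates a near-origin correction $C_sf_{-\mu}(|x|)$ and produces
\[
\mathcal{I}_{\hat x}v_{-\mu}(x)-\mathcal{I}_{\hat x}v_{-\mu}\!\left(x+\tfrac{e_N}{n}\right)\ge\hat c(\mu)\bigl(|x|^{\mu-2s}-|x+\tfrac{e_N}{n}|^{\mu-2s}\bigr)+C_s\bigl(f_{-\mu}(|x|)-f_{-\mu}(|x+\tfrac{e_N}{n}|)\bigr).
\]
Multiplying by $n$ and passing to the limit via Lemma \ref{lem1}(ii)---applicable because $s>\tfrac12$ and $v_{-\mu}\in C^3(\R^N)$ with bounded derivatives up to order three---together with the estimate of $-f_{-\mu}'(|x|)$ obtained along the lines of \eqref{10mageq4}, yields
\[
\mathcal{I}_{\hat x}\!\left(-\tfrac{1}{\mu}D_{x_N}v_{-\mu}\right)\!(x)\ge\frac{(2s-\mu)\hat c(\mu)}{\mu}\frac{x_N}{|x|^{2+2s-\mu}}-\frac{C(\mu,s)\,x_N}{(1-1/|x|)^{1+2s}|x|^{2s+3}}.
\]

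Since $\hat c(\bar\gamma)=0$ while $\hat c(\gamma)>0$ (as $\gamma<\bar\gamma$), summing the two estimates leaves a single strictly positive main term of order $x_N/|x|^{2+2s-\gamma}$, which dominates the sum of the error terms (of order $x_N/|x|^{2s+3}$) for $|x|$ large since $2+2s-\gamma<2s+3$. Choosing $R_0$ large enough gives
\[
\mathcal{I}_{\hat x}\psi(x)\ge\frac{(2s-\gamma)\hat c(\gamma)}{2\gamma}\frac{x_N}{|x|^{2+2s-\gamma}}>0\quad\text{for all }x\in\R^N_+\setminus\overline{B_{R_0}(0)},
\]
whence \eqref{0601eq5} follows. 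The main obstacle is the rigorous derivation of the displayed finite-difference identity for the \emph{unbounded} $v_{-\mu}$: one has to verify that the near-origin correction $f_{-\mu}$ is well defined and that its derivative admits the same type of bound as the one in \eqref{10mageq4}, and both ingredients rely crucially on Lemma \ref{lem1}(ii) and hence on the restriction $s>\tfrac12$---precisely the range of $s$ in which the present proposition is stated.
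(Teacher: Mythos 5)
Your proof follows essentially the same route as the paper's own (sketched) argument: take $\psi$ as a combination of $-D_{x_N}v_{-\bar\gamma}$ and $-D_{x_N}v_{-\gamma}$ with $\gamma<\bar\gamma=2s-1$, repeat the finite-difference computation of Proposition \ref{propSubsol} term by term, and pass to the limit via Lemma \ref{lem1}(ii) (which is exactly where $s>\tfrac12$ enters), isolating a positive leading term $\hat c(\gamma)x_N/|x|^{2+2s-\gamma}$ that dominates the near-origin correction for $|x|$ large. The one imprecision is your parenthetical justification of $\I^+_1v_{-\mu}(x)=\I_{\hat x}v_{-\mu}(x)$: comparing the signs of $\hat c(\mu)$ and of the perpendicular contribution only compares two specific directions and does not by itself yield the supremum; the equality actually follows from the convexity of $\tilde g_{-\mu}$ (so $\I_\xi v_{-\mu}(x)$ is monotone in $|\langle\hat x,\xi\rangle|$), i.e.\ from \cite[Theorem 3.4(i)]{BGT} applied with $\tilde g$ and $\tilde g''$ convex, which is exactly how the paper handles $v_\gamma$ in Proposition \ref{propSubsol}.
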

\begin{proof}[Sketch of the proof.] Fix $\gamma<\bar\gamma=2s-1$ and set
$$
\psi(x)=-\frac{1}{\bar\gamma}\left(D_{x_N}v_{-\bar\gamma}(x)+D_{x_N}v_{-\gamma}(x)\right),
$$
$v_{-\gamma}(x)$ be the function defined in \eqref{0601eq3}-\eqref{0601eq4}. \\
Conditions \eqref{0601eq6}-\eqref{0601eq7} easily follows from the definition of $\psi$. As far as \eqref{0601eq5} is concerned, using Lemma \ref{lem1} and arguing as in the proof of Proposition \ref{propSubsol} with small modifications, we obtain that 
$$
\I^+_1\psi(x)\geq\I_{\hat x}\psi(x)\geq\frac{\hat c(\gamma)(2s-\gamma)}{2\bar\gamma}\frac{x_N}{|x|^{2s+2-\gamma}}
\qquad\forall \RN_+\backslash\overline{B_{R_0}(0)}$$
provided $R_0$ is sufficiently large. Hence the conclusion follows.
\end{proof}
As a consequence we obtain the following  
\begin{proposition}\label{decayestimate3} 
Let $u\in LSC\left(\overline{\RN_+}\right)\cap\mathcal S$ be a supersolution of
\begin{equation*}
\I^+_1u(x)=0\qquad\text{in $\RN_+$}
\end{equation*}
such that 
$$
\text{$u>0$\; in $\overline{\RN_+}\quad$ and $\quad u\geq0$\; in $\RN$.}
$$
Then there exists a positive constants $c=c(u,s)$  such that 
\begin{equation}\label{0701eq1}
u(x)\geq c\,\frac{x_N}{|x|^{3-2s}} \qquad\forall x \in\RN_+\backslash\overline{B_1(0)}.
\end{equation}
\end{proposition}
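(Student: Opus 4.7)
The plan is to mimic verbatim the argument used in Proposition \ref{decayestimate}, replacing the subsolution coming from Proposition \ref{propSubsol} by the one produced in Proposition \ref{propSubsol3}. Let $\psi \in C^2(\R^N)\cap L^\infty(\R^N)$ and $R_0>1$ be as provided by Proposition \ref{propSubsol3}; in particular, $\I^+_1\psi>0$ on $\R^N_+\setminus\overline{B_{R_0}(0)}$, $\psi$ vanishes on $\partial\R^N_+$, is strictly negative on $\R^N_-$, and satisfies $\psi(x)\geq x_N/|x|^{3-2s}$ for $x\in\R^N_+\setminus\overline{B_1(0)}$.

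First, I would define
$$
c:=\inf_{x\in\overline{B_{R_0}(0)}\cap\R^N_+}\frac{u(x)}{\psi(x)},
$$
and observe that $c>0$. Indeed, $u$ is lower semicontinuous and strictly positive on the compact set $\overline{B_{R_0}(0)}\cap\overline{\R^N_+}$, while $\psi$ is continuous and vanishes exactly on $\{x_N=0\}$; a direct local comparison near $\partial\R^N_+$ (using that $\psi(x)\lesssim x_N$ there and $u\geq\delta>0$ on the compact slab) shows the infimum is positive.

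Next, for $\varepsilon>0$ I would introduce the perturbation $u_\varepsilon:=u+\varepsilon$, which is again a supersolution of $\I^+_1 u=0$ in $\R^N_+$. Since $\psi$ is bounded, I can pick $R_1=R_1(\varepsilon)>R_0$ so large that $c\psi(x)\leq \varepsilon\leq u_\varepsilon(x)$ for every $|x|\geq R_1$. By the definition of $c$, $u_\varepsilon>c\psi$ on $\overline{B_{R_0}(0)}\cap\R^N_+$, and on $\overline{\R^N_-}$ we have $c\psi\leq 0<\varepsilon\leq u_\varepsilon$. Then I claim $u_\varepsilon\geq c\psi$ globally. If not, the minimum of $u_\varepsilon-c\psi$ would be attained at some $x_\varepsilon$ in the annular region $\{x\in\R^N_+:R_0<|x|<R_1\}$, and testing $u_\varepsilon$ from below by $c\psi$ at $x_\varepsilon$ via the viscosity supersolution definition (Definition \ref{defviscsol}) would give $\I^+_1\psi(x_\varepsilon)\leq 0$, contradicting the strict inequality \eqref{0601eq5}.

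Finally, letting $\varepsilon\to 0^+$ yields $u(x)\geq c\psi(x)$ for every $x\in\R^N$, and combining with the pointwise lower bound in \eqref{0601eq6} delivers \eqref{0701eq1}. The only delicate point is the boundary behavior used to ensure $c>0$; the rest of the argument is identical to Propositions \ref{decayestimate} and \ref{decayestimate2}, with the single structural difference that here the subsolution $\psi$ decays like $x_N/|x|^{3-2s}$ instead of $x_N/|x|^{\bar\gamma+2}$ or $x_N/|x|^{\gamma+2}$, reflecting the unboundedness of the underlying fundamental-type profile $w_{-\bar\gamma}(x)=-|x|^{2s-1}$ used to build $\psi$ in the regime $s\in(\tfrac12,1)$.
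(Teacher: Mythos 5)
Your proof is correct and is exactly the argument the paper leaves implicit: Proposition \ref{decayestimate3} is presented as a direct consequence of Proposition \ref{propSubsol3} by repeating the comparison scheme used to prove Proposition \ref{decayestimate}, which you reproduce faithfully. The only slight imprecision is in the choice of $R_1$, where ``since $\psi$ is bounded'' should instead invoke the decay $\psi(x)\lesssim |x|^{2s-2}\to 0$ as $|x|\to\infty$ on $\R^N_+$ (which follows from \eqref{0601eq6} together with $x_N\le|x|$ and $s<1$); boundedness alone would not guarantee $c\psi\le\varepsilon$ for $|x|\ge R_1$.
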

Using the estimate \eqref{0701eq1} and reasoning in a similar manner as done in the proofs of Theorem \ref{th3} and Theorem \ref{th4} (case $s=\frac12$), we obtain the desired result in the range $s\in(\frac12,1)$. Details are left to the the reader.

\bigskip

We end this section by pointing out that, under the further assumption $u \in LSC(\R^N)$,  the thesis of  Theorems \ref{th3}-\ref{th4} is $u\equiv0$ in $\R^N$, the conclusion being a consequence of \cite[Proposition 4.7 (i)]{BGS}. Interestingly the situation is different for $\I^-_N$. As showed in the next example, for any $p>0$, there exist solutions $u\in LSC(\R^N)\cap L^\infty(\R^N)$  of 
\begin{equation}\label{311224eq1}
\I^-_Nu(x)+u^p(x)\leq0 \quad\text{in}\;\;\R^N_+
\end{equation}
satisfying the following properties:
\begin{equation}\label{311224eq2}
u\geq0\quad\text{in}\;\;\RN\;,\qquad u\equiv0\quad\text{in}\;\;\RN_+\qquad\text{and}\qquad u\not\equiv0\quad\text{in}\;\;\RN\backslash\R^N_+.
\end{equation}
\begin{example}
{\rm
Fix any $r>0$ and any $y\in\R^N\backslash\overline{\R^N_+}$ such that 
\begin{equation}\label{311224eq3}
y_N\leq-\sqrt{2}r.
\end{equation}
Let $f\in LSC(B_r(y))\cap L^\infty(B_r(y))$ be such that $f\geq0$ and $f\not\equiv0$ in $B_r(y)$. Consider the function
$$
u(x)=\begin{cases}
f(x) & \text{if $x\in B_r(y)$}\\
0 & \text{otherwise}.
\end{cases}
$$
By construction $u\in LSC(\R^N)\cap L^\infty(\R^N)$ satisfies the conditions \eqref{311224eq2}.  We now prove that 
$$
\I^-_Nu(x)\leq0 \quad\text{in}\;\;\R^N_+.
$$
For this, let $x\in\R^N_+$ and let $\xi_1,\ldots,\xi_N$ be the orthonormal basis such that 
\begin{equation}\label{311224eq4}
\left\langle \widehat{x-y},\xi_i\right\rangle=\frac{1}{\sqrt{N}}\qquad i=1,\ldots,N.
\end{equation}
We claim that 
\begin{equation}\label{311224eq5}
x+\tau\xi_i\notin \overline{B_r(y)}\qquad\forall \tau\in\R,\;i=1,\ldots,N.
\end{equation}
Once this is proved, then we easily infer that $$\I_{\xi_i}u(x)=0\qquad\text{ for any $i=1,\ldots,N$.}$$ Hence we conclude
$$
\I^-_Nu(x)\leq\sum_{i=1}^N\I_{\xi_i}u(x)=0.$$ In order to prove \eqref{311224eq5},  using  \eqref{311224eq4}  we have
\begin{equation}\label{311224eq6}
\begin{split}
|x+\tau\xi_i-y|^2&=|x-y|^2+\frac{2}{\sqrt{N}}|x-y|\tau+\tau^2\\
&\geq|x-y|^2-\frac1N|x-y|^2\geq\frac{|x-y|^2}{2}.
\end{split}
\end{equation}
By the assumption \eqref{311224eq3} and $x\in\R^N_+$, we also have
$$
|x-y|\geq x_N-y_N>-y_N>\sqrt{2}r.$$
Thus, from \eqref{311224eq6}, we conclude  
$$
|x+\tau\xi_i-y|^2>r^2\qquad\forall \tau\in\R,\;i=1,\ldots,N.
$$
}
\end{example}


\section{Existence of supersolutions}\label{Ex}
\subsection{Theorem \ref{th1}}

\begin{proof}[Proof of Theorem \ref{th1}]
For any nonnegative integer $n$ and any $\varepsilon\in(0,\frac12)$, let 
$$
u_{\varepsilon,n}(t)=\left(\varepsilon^2-(t-n-\varepsilon)^2\right)^s_+.
$$
Such function has constant one dimensional fractional laplacian in the interval $(n,n+2\varepsilon)$, in fact
\begin{equation}\label{eq3}
(-\Delta)^s u_{\varepsilon,n}(t):=C_s\,{\rm P.V.}\int\limits_{-\infty}^{+\infty}\frac{u_{\varepsilon,n}(t)-u_{\varepsilon,n}(\tau)}{|t-\tau|^{1+2s}}\,d\tau=C_s\beta(1-s,s)\qquad\forall t\in(n,n+2\varepsilon)
\end{equation}
where $\beta(\cdot,\cdot)$ is the special Beta function. For a proof of \eqref{eq3} see \cite[Section 2.6]{BV}
and use the rescaling property and the translation invariance of  $(-\Delta)^s$.\\
Note that $${\rm supp}\,u_{\varepsilon,n}=[n,n+2\varepsilon]\qquad\text{and}\qquad{\rm supp}\,u_{\varepsilon,n}\cap {\rm supp}\,u_{\varepsilon,m}=\emptyset$$
for $n\neq m$, since $\varepsilon\in(0,\frac12)$.

Set
\begin{equation*}
u_\varepsilon(x):=\sum_{n=0}^{+\infty}u_{\varepsilon,n}(x_N)\,,\qquad x\in\RN.
\end{equation*}
By construction $u_\varepsilon\in C^{0,s}(\RN)\cap L^\infty(\RN)$, $u\geq0$ in $\RN$ and $u(x)=0$ for any  $x\in\RN\backslash\mathbb R^N_+$. Moreover
$$
\limsup_{x_N\to+\infty} u_\varepsilon(x',x_N)=\varepsilon^{2s}\qquad\text{uniformly w.r.t. $x'\in\mathbb R^{N-1}$}.
$$
We claim that for $\varepsilon=\varepsilon(s,p)$ small enough, then $u_\varepsilon$ satisfies in the viscosity sense the inequality
\begin{equation}\label{eq4}
\I^-_ku_\varepsilon(x)+u_\varepsilon^p(x)\leq0\quad\text{in $\RN_+$}.
\end{equation}
Let $x\in\RN_+$. We distinguish three different cases.

\medskip

\noindent
\textbf{Case 1:} $x_N\in(m,m+2\varepsilon)$ for some nonnegative integer $m$.

\smallskip
\noindent
In this case $u_\varepsilon$ is twice differentiable in a neighborhood of $x$, hence the operator $\I^-_ku_\varepsilon$ can be evaluated classically at $x$. Moreover, denoting by $\left\{e_1,\ldots,e_N\right\}$ the canonical basis of $\RN$, it turns out that
\begin{equation}\label{eq5}
\I^-_ku_\varepsilon(x)\leq\sum_{i=N-k+1}^N\I_{e_i}u_\varepsilon(x)=\I_{e_N}u_\varepsilon(x)\,.
\end{equation}
In \eqref{eq5} we have used the minimality of $\I^-_k$, within the class of $k$-orthonormal sets in $\RN$, for the first inequality and the fact that $u_\varepsilon(x+\tau e_i)=u_\varepsilon(x)$ for any $\tau\in\mathbb R$ and $i<N$, for the last equality.\\
As far as $\I_{e_N}u_\varepsilon(x)$ is concerned, using \eqref{eq3}, we have
\begin{equation}\label{eq6}
\begin{split}
\I_{e_N}u_\varepsilon(x)&=C_s\,{\rm P.V.}\int\limits_{-\infty}^{+\infty}\frac{u_{\varepsilon}(x+\tau e_N)-u_{\varepsilon}(x)}{|\tau|^{1+2s}}\,d\tau\\
&=C_s\,{\rm P.V.}\int\limits_{-\infty}^{+\infty}\frac{u_{\varepsilon,m}(x_N+\tau)-u_{\varepsilon,m}(x_N)}{|\tau|^{1+2s}}\,d\tau+C_s\,\int\limits_{-\infty}^{+\infty}\frac{\sum_{n\neq m}u_{\varepsilon,n}(x_N+\tau)}{|\tau|^{1+2s}}\,d\tau\\
&= -C_s\beta(1-s,s)+C_s\int\limits_{\left(-(1-2\varepsilon),1-2\varepsilon\right)^c}\frac{\sum_{n\neq m}u_{\varepsilon,n}(x_N+\tau)}{|\tau|^{1+2s}}\,d\tau
\end{split}
\end{equation}
where in the last equality we have used the fact that $$\sum_{n\neq m}u_{\varepsilon,n}(x_N+\tau)=0\qquad\forall\tau\in\left(-(1-2\varepsilon),1-2\varepsilon\right).$$ 
Moreover, since
\begin{equation*}
0\leq\sum_{n\neq m}u_{\varepsilon,n}(x_N+\tau)\leq\sum_{n=0}^{+\infty}u_{\varepsilon,n}(x_N+\tau)\leq \varepsilon^{2s}\qquad\forall\tau\in\mathbb R,
\end{equation*}
we easily obtain from \eqref{eq6} that
\begin{equation}\label{eq8}
\I_{e_N}u_\varepsilon(x)\leq -C_s\beta(1-s,s)+C_s\frac{(1-2\varepsilon)^{-2s}}{s}\varepsilon^{2s}.
\end{equation}
Now we use  \eqref{eq5} and \eqref{eq8} to infer that
$$
\I^-_ku_\varepsilon(x)+u_\varepsilon^p(x)\leq -C_s\beta(1-s,s)+C_s\frac{(1-2\varepsilon)^{-2s}}{s}\varepsilon^{2s}+\varepsilon^{2sp}\leq0
$$
choosing $\varepsilon=\varepsilon(s,p)$ small enough.   

\medskip

\noindent
\textbf{Case 2:} $x_N\in(m+2\varepsilon,m+1)$ for some nonnegative integer $m$.

\smallskip
\noindent
In this case $u_\varepsilon=0$ in a neighborhood of $x$. In addition 
\begin{equation}\label{eq9}
u_\varepsilon(x+\tau e_i)=0\qquad\forall\tau\in\mathbb R,\,\forall i=1,\ldots,N-1.
\end{equation}
By \eqref{eq9} it follows that
$$
\I^-_ku_\varepsilon(x)+u^p_\varepsilon(x)\leq\sum_{i=1}^k\I_{e_i}u_\varepsilon(x)=0.
$$

\smallskip

\noindent
\textbf{Case 3:} $x_N=m$ or $x_N=m+2\varepsilon$ for some positive integer $m$.

\smallskip
\noindent
Differently from the previous cases, now $u_\varepsilon$ is not differentiable at $x$. We shall prove that the inequality
$$
\I^-_ku_\varepsilon(x)+u_\varepsilon^p(x)\leq0
$$
holds in the viscosity sense. For this let $\delta>0$ and $\varphi\in C^2(\overline{B_\delta(x)})$ such that 
$$
\varphi(x)=u_\varepsilon(x)=0\qquad\text{and}\qquad\varphi(y)\leq u_\varepsilon(y) \quad\forall y\in B_\delta(x)\subset\R^N_+.
$$
Set
$$
\psi(y)=\begin{cases}
\varphi(y) & \text{if $y\in B_\delta(x)$}\\
u_\varepsilon(y) & \text{otherwise}.
\end{cases}
$$
Since for any $i=1,\ldots,N-1$ it holds that 
\begin{equation*}
\begin{split}
\psi(x+\tau e_i)&\leq0 \qquad\forall \tau\in(-\delta,\delta)\\
\psi(x+\tau e_i)&=0 \qquad\forall \tau\in(-\delta,\delta)^c,
\end{split}
\end{equation*}
then we conclude 
\begin{equation*}
\I^-_k\psi(x)+u_\varepsilon^p(x)=\I^-_k\psi(x)\leq\sum_{i=1}^k\I_{e_i}\psi(x)=\sum_{i=1}^k C_s\,\int\limits_{0}^{\delta}\frac{\psi(x+\tau e_i)+\psi(x-\tau e_i)}{\tau^{1+2s}}\,d\tau\leq0.
\end{equation*}
\end{proof}

\subsection{Theorem \ref{thIN}}


We start by recalling that the constant $\tilde \gamma$ is the unique positive number vanishing the function $c(\gamma)$ defined, for all $\gamma>0$, by 
\begin{equation}\label{0801eq2}
c(\gamma)=\int\limits_0^{+\infty}\frac{\left(1+\tau^2+\frac{2}{\sqrt N}\tau\right)^{-\frac\gamma{2}}
+\left(1+\tau^2-\frac{2}{\sqrt N}\tau\right)^{-\frac{\gamma}2}-2}{\tau^{1+2s}}\,d\tau. 
\end{equation}
The key properties of $c(\gamma)$ are (see \cite[Lemma 4.8]{BGT}): 
\begin{equation}\label{0701peq1}
c(0^+)=0\,,\quad c'(0^+)<0\,,\quad \lim_{\gamma\to+\infty}c(\gamma)=+\infty \ \ \text{ and } \ \ 
c \ \text{ is convex in } (0,+\infty).
\end{equation}
We introduce the function $c_N^+(\gamma)$ defined, for any $\gamma>0$, by
\begin{equation*}
c_N^+(\gamma)=Nc(\gamma)-\int\limits_{\sqrt N}^{+\infty} \frac{\left(1+\tau^2-\frac 2{\sqrt N}\tau\right)^{-\frac\gamma 2}}{\tau^{1+2s}}\,d\tau.
\end{equation*}
The number $\gamma^+$ appearing in Theorem \ref{thIN} is given by the following 
\begin{lemma} \label{T49-1}There is a number $\gamma^+ \in(\tilde\gamma,\,+\infty)$ such that 
\[
c_N^+(\gamma)<0 \ \ \text{ for } \gamma\in(0,\, \gamma^+)\,,\ \ c_N^{+}(\gamma^+)=0 \ \ \text{ and } \ \ c_N^+(\gamma)>0 \ \ \text{ for } \gamma\in(\gamma^+,+\infty). 
\]
\end{lemma}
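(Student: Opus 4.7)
The plan is to decompose
\[
c_N^+(\gamma)=Nc(\gamma)-J(\gamma),\qquad
J(\gamma):=\int_{\sqrt N}^{+\infty}\frac{(1+\tau^2-\tfrac{2}{\sqrt N}\tau)^{-\gamma/2}}{\tau^{1+2s}}\,d\tau,
\]
and to exploit the properties of $c$ collected in \eqref{0701peq1} (namely $c(0^+)=0$, $c'(0^+)<0$, convexity on $(0,+\infty)$, and $c(+\infty)=+\infty$). The key observation is that on the integration range $[\sqrt N,+\infty)$ the expression $1+\tau^2-\tfrac{2}{\sqrt N}\tau$ attains its minimum value $N-1\geq 1$ at $\tau=\sqrt N$ (using the standing assumption $N\geq 2$) and is strictly greater than $1$ for every $\tau>\sqrt N$; consequently $J$ is a smooth, positive function of $\gamma\geq 0$, uniformly bounded by $\int_{\sqrt N}^{+\infty}\tau^{-1-2s}\,d\tau$.

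First I would dispose of the interval $(0,\tilde\gamma]$. Convexity of $c$ together with $c(0)=c(\tilde\gamma)=0$ forces $c\leq 0$ on $[0,\tilde\gamma]$, so $Nc(\gamma)\leq 0$ there; since $J(\gamma)>0$ for every $\gamma\geq 0$, this yields $c_N^+(\gamma)<0$ throughout $(0,\tilde\gamma]$. In particular $c_N^+(\tilde\gamma)=-J(\tilde\gamma)<0$.

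The core step is strict monotonicity of $c_N^+$ on $[\tilde\gamma,+\infty)$. On one hand, convexity of $c$ combined with the uniqueness of the positive zero $\tilde\gamma$ and $c(+\infty)=+\infty$ gives $c(\gamma)>0$ for $\gamma>\tilde\gamma$, and the nondecreasing-chord-slope characterisation of convexity then yields that $c$ is strictly increasing on $[\tilde\gamma,+\infty)$: for $\tilde\gamma<\gamma_1<\gamma_2$, the chord slope from $0$ to $\gamma_1$ is $c(\gamma_1)/\gamma_1>0$, and by convexity it is bounded above by the chord slope from $\gamma_1$ to $\gamma_2$, forcing $c(\gamma_2)>c(\gamma_1)$. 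On the other hand, differentiating $J$ under the integral sign produces the factor $-\tfrac12\log(1+\tau^2-\tfrac{2}{\sqrt N}\tau)$, which is nonpositive on $[\sqrt N,+\infty)$ and strictly negative on $(\sqrt N,+\infty)$; hence $J'(\gamma)<0$ for every $\gamma\geq 0$. Therefore $c_N^+=Nc+(-J)$ is strictly increasing on $[\tilde\gamma,+\infty)$. I expect the only mildly technical point to be the justification of differentiation under the integral sign, which is routine from the integrable dominant $\tau^{-1-2s}$ for large $\tau$ and a uniform control of $\log(1+\tau^2-\tfrac{2}{\sqrt N}\tau)$ near $\tau=\sqrt N$.

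To finish, dominated convergence applied with dominating function $\tau^{-1-2s}$ (legitimate since $(1+\tau^2-\tfrac{2}{\sqrt N}\tau)^{-\gamma/2}\leq 1$ on $[\sqrt N,+\infty)$) and with pointwise limit $0$ a.e.\ yields $J(\gamma)\to 0$ as $\gamma\to+\infty$; combined with $c(\gamma)\to+\infty$ this gives $c_N^+(\gamma)\to+\infty$. Since $c_N^+$ is continuous, strictly increasing on $[\tilde\gamma,+\infty)$, negative at $\tilde\gamma$, and unbounded above, the intermediate value theorem supplies a unique zero $\gamma^+\in(\tilde\gamma,+\infty)$, and together with the negativity on $(0,\tilde\gamma]$ established earlier this produces the asserted sign behaviour of $c_N^+$.
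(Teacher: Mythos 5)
Your proof is correct, but it takes a genuinely different route from the paper. The paper establishes that $c_N^+$ is \emph{strictly convex} on $(0,+\infty)$ by computing $(c_N^+)''$ explicitly and writing it as a sum of three nonnegative integrals, one of which carries the coefficient $\frac N4-\frac14$ (this is where $N\geq 2$ enters); combining strict convexity with $c_N^+(0^+)<0$, with $c_N^+(\tilde\gamma)<0$ (coming from $c(\tilde\gamma)=0$ and the strictly positive integral subtracted off), and with $c_N^+(\gamma)\to+\infty$ then yields the unique sign change. You instead split $(0,+\infty)$ at $\tilde\gamma$: on $(0,\tilde\gamma]$ you use only the convexity of $c$ together with $c(0^+)=c(\tilde\gamma)=0$ to get $Nc\leq 0$, so $c_N^+=Nc-J<0$ there; on $[\tilde\gamma,+\infty)$ you argue strict monotonicity of $c_N^+$ by showing $Nc$ is strictly increasing past $\tilde\gamma$ via nondecreasing chord slopes and $J$ is strictly decreasing via the sign of $J'$ (the $\log$ factor is nonnegative on $[\sqrt N,+\infty)$ and positive off a null set). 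This is a first-order argument in place of the paper's second-order one. The trade-off: you avoid the $(c_N^+)''$ computation and make the competing roles of $Nc$ and $-J$ transparent, at the modest cost of having to show that $c$ increases strictly past its zero; the paper's route yields the additional structural fact that $c_N^+$ is strictly convex on all of $(0,+\infty)$, which is not needed for the lemma but could conceivably be reused. Both arguments are complete and correct.
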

\begin{proof}
In view of \eqref{0701peq1} we infer that 
\begin{equation}\label{0701peq2}
c_N^{+}(0^+)<0 \ \ \text{ and } \ \ c_N^+(\gamma)<c(\gamma) \ \ \text{ for } \gamma\in(0,+\infty).
\end{equation}
Since, for $\tau>\sqrt N$,
\[
1+\tau^2-\frac 2{\sqrt{N}}\,\tau=1+\tau\,\frac{\sqrt N\tau-2}{\sqrt N}>1+\tau\,\frac{N-2}{\sqrt N}
\geq 1,
\]
it follows that 
\[
\lim_{\gamma\to+\infty} (c_N^+(\gamma)-Nc(\gamma))=0,
\]
and 
\begin{equation}\label{0701peq3}
\lim_{\gamma \to+\infty} c_N^+(\gamma)=+\infty.
\end{equation}
Moreover $c_N^+\in C^2(0,+\infty)$ and by a straightforward computation we see that
\begin{equation*}
\begin{split}
(c_N^+)''(\gamma)&=Nc''(\gamma)-\frac14\int\limits_{\sqrt N}^{+\infty} \frac{f(-\tau)}{\tau^{1+2s}}\,d\tau\\
&=\frac N4\int\limits_0^{\sqrt{N}}\frac{f(\tau)+f(-\tau)}{\tau^{1+2s}}\,d\tau+\frac N4\int\limits_{\sqrt{N}}^{+\infty}\frac{f(\tau)}{\tau^{1+2s}}\,d\tau+\left(\frac N4-\frac14\right)\int\limits_{\sqrt{N}}^{+\infty}\frac{f(-\tau)}{\tau^{1+2s}}\,d\tau
\end{split}
\end{equation*} 
where $$f(\tau)=\left(1+\tau^2+\frac{2}{\sqrt N}\tau\right)^{-\frac\gamma{2}}
\log^2\left(1+\tau^2+\frac{2}{\sqrt N}\tau\right).$$
Since $f(\tau)\geq0$ for any $\tau\in\R$ and $f(\tau)>0$ for any $\tau\in\R\backslash\left\{-\frac{2}{\sqrt{N}},0\right\}$, then $c_N^+$ is strictly convex in $(0,+\infty)$. From this and using \eqref{0701peq1}, \eqref{0701peq2} and \eqref{0701peq3} we obtain the result.
\end{proof}

Let $v_\gamma\in C^3(\RN)\cap L^\infty(\RN)$ be the function defined by \eqref{3eq11}-\eqref{3012eq5} and consider
\begin{equation}\label{49-1}
u_\gamma(x)=\begin{cases}
0 & \text{for $x\in\RN\backslash\RN_+$}\\
v_\gamma(x) & \text{for $x\in\RN_+$} 
\end{cases}=
\begin{cases}
0 & \text{for $x\in\RN\backslash\RN_+$}\\
\tilde f(|x|^2) & \text{for $x\in\RN_+\,,\;|x|\leq1$}\\
|x|^{-\gamma} & \text{for $x\in\RN_+\,,\;|x|>1$}. 
\end{cases}
\end{equation}
Notice that, for $x\in\RN_+$, $\I_\xi u_\gamma(x)$ is well defined for any direction $\xi\in\mathbb S^{N-1}$ and for any positive $\gamma$ (not only $\gamma\in(0,1)$).

\begin{proposition} \label{T49-2}Let $R={\sqrt{\frac{N}{N-1}}}$. Then the function $u_\gamma$ given by \eqref{49-1} satisfies 
\[
\I_N^-u_\gamma(x)\leq C_s c_N^+(\gamma) u^{1+\frac{2s}{\gamma}}(x) \ \ \text{ in }\, \RN_+\backslash B_R(0).
\]
\end{proposition}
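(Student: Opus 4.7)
The approach is to bound $\I_N^- u_\gamma(x)$ from above by $\sum_{i=1}^N \I_{\xi_i} u_\gamma(x)$ for a carefully chosen orthonormal frame and to quantify the ``deficit'' that the truncation $u_\gamma = v_\gamma\cdot \chi_{\R^N_+}$ introduces against the radial reference $v_\gamma$. The structural input is Theorem 3.4-(iii) of BGT, which yields $\sum_{i=1}^N \I_{\xi_i} v_\gamma(x) = \I_N^- v_\gamma(x) = NC_s c(\gamma)\,|x|^{-\gamma-2s}$ for $|x|\ge 1$ and for every orthonormal basis satisfying $\langle \hat x, \xi_i\rangle = 1/\sqrt N$ for all $i$. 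Among such bases I would fix one with the extra property $(\xi_1)_N \ge (\hat x)_N/\sqrt N$: since the identity $\hat x = (1/\sqrt N)\sum_i \xi_i$ forces $\sum_i (\xi_i)_N = \sqrt N\,(\hat x)_N > 0$, an averaging argument guarantees at least one index satisfies the bound, which I relabel as $\xi_1$.

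Next I would exploit the choice $R=\sqrt{N/(N-1)}$: for $|x|\ge R$ one has $|x\pm\tau\xi_i|^2 \ge |x|^2(N-1)/N \ge 1$ for every $\tau \ge 0$, so $v_\gamma$ reduces to $|\cdot|^{-\gamma}$ on every relevant ray while $u_\gamma(y)$ equals $v_\gamma(y)$ or $0$ according to the sign of $y_N$. Setting $E_i^\pm := \{\tau > 0 : (x\pm\tau\xi_i)_N \le 0\}$ and using $u_\gamma(x) = v_\gamma(x)$, each one-dimensional operator splits as
\[
\I_{\xi_i} u_\gamma(x) = \I_{\xi_i} v_\gamma(x) - C_s\int_{E_i^+}\frac{v_\gamma(x+\tau\xi_i)}{\tau^{1+2s}}\,d\tau - C_s\int_{E_i^-}\frac{v_\gamma(x-\tau\xi_i)}{\tau^{1+2s}}\,d\tau,
\]
with both subtracted integrals nonnegative. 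Summing over $i$, retaining only the single nonnegative contribution from $E_1^- = [x_N/(\xi_1)_N,+\infty)$, and invoking $(\xi_1)_N\ge (\hat x)_N/\sqrt N$ to see that its left endpoint is at most $\sqrt N\,|x|$, the rescaling $\tau = |x|\sigma$ (combined with $\langle \hat x,\xi_1\rangle = 1/\sqrt N$) converts the retained integral into
\[
|x|^{-\gamma-2s}\int_{\sqrt N}^{+\infty}\frac{\bigl(1+\sigma^2 - 2\sigma/\sqrt N\bigr)^{-\gamma/2}}{\sigma^{1+2s}}\,d\sigma,
\]
which is precisely the quantity completing $NC_s c(\gamma)$ into $C_s c_N^+(\gamma)$. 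Hence $\I_N^- u_\gamma(x) \le C_s c_N^+(\gamma)\,|x|^{-\gamma-2s} = C_s c_N^+(\gamma)\,u_\gamma^{1+2s/\gamma}(x)$.

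The hard part will be the geometric selection of the adapted frame and verifying that the single summand retained in the deficit already accounts for the entire integral defining $c_N^+(\gamma)$: the pigeonhole bound on $(\xi_1)_N$ must be strong enough to push the lower endpoint $x_N/(\xi_1)_N$ below $\sqrt N\,|x|$, which is exactly what the averaging identity provides. A minor secondary check is that no ray $\tau \mapsto x \pm \tau \xi_i$ enters the inner region $|y|<1$ where $v_\gamma$ is a mollification rather than $|y|^{-\gamma}$, and this is precisely why the statement is confined to $|x| \ge R$.
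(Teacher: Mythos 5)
Your proposal is correct and follows essentially the same route as the paper's proof: choose an orthonormal frame with $\langle\hat x,\xi_i\rangle=1/\sqrt N$ for every $i$, note that $R=\sqrt{N/(N-1)}$ keeps all rays outside $B_1(0)$ so each $\I_{\xi_i}v_\gamma(x)=C_sc(\gamma)|x|^{-\gamma-2s}$, use the averaging/pigeonhole identity $\sum_i(\xi_i)_N=\sqrt N\,(\hat x)_N$ to pick the one direction $\xi_{\bar i}$ with $(\xi_{\bar i})_N\ge(\hat x)_N/\sqrt N$, discard the nonnegative truncation deficits from the other directions, and keep only the deficit along $\xi_{\bar i}$, whose crossing parameter $x_N/(\xi_{\bar i})_N\le\sqrt N|x|$ yields exactly the integral $\int_{\sqrt N}^{+\infty}(1+\sigma^2-\tfrac2{\sqrt N}\sigma)^{-\gamma/2}\sigma^{-1-2s}\,d\sigma$ after the rescaling $\tau=|x|\sigma$. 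The only cosmetic difference is that the paper organizes the indices into $I_+,I_-,I_0$ before selecting $\bar i$, whereas you subtract the deficits uniformly and then select $\xi_1$; the substance is identical.
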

\begin{proof}
For $x\in\RN_+$ we choose an orthonormal basis $\left\{\xi_i:\,i\in I\right\}$ where $I=\left\{1,\ldots,N\right\}$ of $\RN$ so that 
$$
\hat x=\frac 1{\sqrt N}(\xi_1+\cdots+\xi_N).
$$
It follows that $\left\langle \hat x, \xi_i\right\rangle=\frac{1}{\sqrt N}$ and
\[
\left\langle\hat x, e_N\right\rangle=\frac 1{\sqrt N} \left(\left\langle\xi_1, e_N\right\rangle+\cdots+\left\langle\xi_N, e_N\right\rangle\right).
\]
The later implies that there exists $\bar i\in I$ such that 
\[
\left\langle\hat x, e_N\right\rangle\leq N\frac 1{\sqrt N}\left\langle\xi_{\bar i}, e_N\right\rangle,  
\]
i.e., 
\[
\left\langle x,e_N\right\rangle \leq \sqrt N|x| \left\langle\xi_{\bar i},e_N\right\rangle. 
\]
This implies that for some $\tau_{\bar i}\in(0,\,|x|\sqrt N]$ it holds that 
\begin{equation}\label{49-2}
x+\tau \xi_{\bar i} \in\begin{cases} 
\R_+^N & \text{ for } \tau>-\tau_{\bar i} \\ 
\R^N\backslash \R_+^N & \text{ for } \tau\leq -\tau_{\bar i}. 
\end{cases}
\end{equation}
We define the subsets $I_+,\,I_-, \, I_0$ of $I$ such that 
\[
I=I_+\cup I_-\cup I_0,\quad \text{ and } \quad
\left\langle \xi_i,e_N\right\rangle
\begin{cases} 
>0 & \text{ for } i\in I_+\\
<0 & \text{ for } i\in I_-\\
=0 & \text{ for } i\in I_0.
\end{cases}
\]
Note $\bar i\in I_+$, that if $i\in I_+$, then there exists $\tau_i>0$ such that 
\[
x+\tau\xi_i \in\begin{cases} \R_+^N &\text{ for } \tau>-\tau_i \\ 
\R^N\backslash\R_+^N & \text{ for }\tau \leq -\tau_i ,
\end{cases}
\] 
that if $i\in I_-$, then there exists $\tau_i>0$ such that 
\[
x+\tau \xi_i \in\begin{cases}
\R^N\backslash\R_+^N & \text{ for } \tau\geq \tau_i\\
\R_+^N &\text{ for } \tau <\tau_i
\end{cases}
\] 
and that if $i\in I_0$, then $x+\tau\xi_i\in\R_+^N$ for $\tau\in\R$.
 
\smallskip

Suppose also that $|x|\geq\sqrt{\frac{N}{N-1}}$. A straightforward computation yields
$$
|x+\tau\xi_i|\geq1\qquad\forall  \tau\in\R,\;i=1,\ldots,N.
$$ 
Hence $v_\gamma(x\pm\tau\xi_i)={|x\pm\tau\xi_i|}^{-\gamma}$ and 
\begin{equation}\label{0801eq1}
\I_{\xi_i}v_\gamma(x)=C_sc(\gamma)|x|^{-\gamma-2s}
\end{equation}
with $c(\gamma)$ defined by \eqref{0801eq2}.\\
Consider the case when $i\in I_+$. Using \eqref{0801eq1} we have
\begin{align*}
\I_{\xi_i}u_\gamma(x)&=C_s\int\limits_0^{\tau_i} \frac{u_\gamma(x+\tau\xi_i)+u_\gamma(x-\tau \xi_i)-2u_\gamma(x)}{\tau^{1+2s}}\,d\tau
 +C_s\int\limits_{\tau_i}^{+\infty} \frac{u_\gamma(x+\tau\xi_i)-2u_\gamma(x)}{\tau^{1+2s}}\,d\tau
\\& =C_s\int\limits_0^{\tau_i} \frac{v_\gamma(x+\tau\xi_i)+v_\gamma(x-\tau \xi_i)-2 v_\gamma(x)}{\tau^{1+2s}}\,d\tau
\\&\quad+C_s\int\limits_{\tau_i}^\infty \frac{v_\gamma(x+\tau\xi_i)+v_\gamma(x-\tau\xi_i)-2v_\gamma(x)}{\tau^{1+2s}}\,d\tau-C_s\int\limits_{\tau_i}^{+\infty} \frac{v_\gamma(x-\tau\xi_i)}{\tau^{1+2s}}\,d\tau
\\&=C_sc(\gamma)|x|^{-\gamma-2s}  -C_s\int\limits_{\tau_i}^{+\infty} \frac{v_\gamma(x-\tau\xi_i)}{\tau^{1+2s}}\,d\tau.
\end{align*}
Since $v_\gamma\geq 0$, we may conclude that 
\begin{equation}\label{49-3}
\I_{\xi_i}u_\gamma(x)
\leq C_sc(\gamma)|x|^{-\gamma-2s}\qquad i\in I_+. 
\end{equation}
When $i=\bar i$, we deal more carefully with the term  
\[
\int\limits_{\tau_{\bar i}}^{+\infty} \frac{v_\gamma(x-\tau\xi_{\bar i})}{\tau^{1+2s}}\,d\tau.
\]
In view of \eqref{49-2}, we have 
$\tau_{\bar i}\leq|x|\sqrt N$ and hence 
\[
\begin{split}
\int\limits_{\tau_{\bar i}}^{+\infty} \frac{v_\gamma(x-\tau\xi_{\bar i})}{\tau^{1+2s}}\,d\tau
&\geq \int\limits_{|x|\sqrt N}^{+\infty} \frac{v_\gamma(x-\tau\xi_{\bar i})}{\tau^{1+2s}}\,d\tau\\
& =|x|^{-\gamma-2s} \int\limits_{\sqrt N}^{+\infty} \frac{v_\gamma(\hat x-\tau \xi_{\bar i})}{\tau^{1+2s}}\,d\tau
\\& =|x|^{-\gamma-2s} \int\limits_{\sqrt N}^{+\infty}
\frac {\left(1+\tau^2 -\frac 2{\sqrt N}\,\tau \right)^{-\frac \gamma 2}}{\tau^{1+2s}}\,d\tau.
\end{split}
\]
Hence, we have
\begin{equation} \label{49-4}
\I_{\xi_{\bar i}}u_\gamma(x)\leq C_s
 \left(c(\gamma)-\int\limits_{\sqrt N}^{+\infty} \frac{\left(1+\tau^2-\frac 2{\sqrt N} \tau\right)^{-\frac\gamma 2}}{\tau^{1+2s}}d\tau\right)|x|^{-\gamma-2s}. 
\end{equation}
Next, consider the case when $i\in I_-$. Similarly to \eqref{49-3}, we have
\begin{equation}\label{49-5}
\I_{\xi_i}u_\gamma(x)
\leq C_sc(\gamma)|x|^{-\gamma-2s}\qquad i\in I_-. 
\end{equation}
In the case when $i\in I_0$, we have $$u_\gamma(x\pm\tau\xi_i)=v_\gamma(x\pm\tau\xi_i)=|x\pm\tau\xi_i|^{-\gamma}\qquad\forall \tau\geq0.$$
Hence, by \eqref{0801eq1}, we obtain that 
\begin{equation} \label{49-6}
\I_{\xi_i}u_\gamma(x)=C_sc(\gamma)|x|^{-\gamma-2s}\qquad i\in I_0.
\end{equation}
Combining \eqref{49-3}--\eqref{49-6}, we conclude
\[
\I_{N}^- u_\gamma(x)\leq\sum_{i\in I}\I_{\xi_i} u_\gamma(x)\leq C_s c_N^+(\gamma) |x|^{-\gamma-2s} 
=C_sc_N^+(\gamma) u^{1+\frac {2s}\gamma}(x). 
\]
\end{proof}
\begin{remark}\label{T49-3}
{\rm Let $p>1+\frac {2s}{\gamma^+}$. Pick $\gamma\in(0,\,\gamma^+)$ 
such that $p=1+\frac{2s}{\gamma}$ and let $u_\gamma$ be the function provided by Proposition \ref{T49-2}. 
For $\varepsilon>0$  to be fixed,  compute that for $x\in\R_+^N\backslash B_R(0)$,
\[
\I_N^- (\varepsilon u_\gamma)(x)+(\varepsilon u_\gamma)^p(x) 
\leq \left(\varepsilon^p+\varepsilon C_sc_N^+(\gamma)\right)u_\gamma^p(x)
=\varepsilon \left(\varepsilon^{p-1}+C_sc_N^+(\gamma)\right)u_\gamma^p(x).
\]
Since $c_N^+(\gamma)<0$ by Lemma \ref{T49-1}, we may choose $\varepsilon>0$ small enough in such a way 
\[
\varepsilon^{p-1}+C_sc_N^+(\gamma)\leq 0. 
\]
Then the function $\varepsilon u_\gamma$ is a supersolution of 
\[
\I_N^- u+u^p =0 \ \ \text{ in } \R_+^N\backslash B_R(0).
\]
By translating downward the function $u_\gamma$, we obtain that the function $\phi(x)=\varepsilon u_\gamma\left(x+Re_N\right)$ is in turn a supersolution of
\[
\I_N^- u+u^p =0 \ \ \text{ in } \R_+^N.
\]
Notice that $\phi \geq 0$ in $\R^N$, $\phi\in LSC(\R^N)$ and that  
$\phi$ discontinuous in $\R^N$. Note furthermore that 
$\phi(x)=0$ for $x\in\RN\backslash(\RN_+-Re_N)$, $\phi(x)>0$ for $x\in\RN_+-Re_N$ 
and that $\phi\in C(\RN_+-Re_N)$.}
\end{remark}

\begin{proof}[Proof of Theorem \ref{thIN}] 
Let $p>1+\frac{2s}{\gamma^+}$, where $\gamma^+>\tilde\gamma$ is defined in Lemma \ref{T49-1}.  Choose $\gamma\in(0,\,\gamma^+)$ such that $p=1+\frac{2s}{\gamma}$. In view of Proposition \ref{T49-2}, the function $$\phi(x)=u_\gamma(x+Re_N)\,,\qquad R=\sqrt{\frac{N}{N-1}}$$ satisfies
\[
\I_N^- \phi(x)+\alpha \phi^p(x)\leq 0 \ \ \text{ in } \R_+^N, 
\]
where $\alpha=-c_N^+(\gamma)C_s$ is positive because of $\gamma<\gamma^+$.

\smallskip

Fix any $\mu\in(0,\,s)$ and set 
\[
z(x)=(x_N)_+^\mu \ \ \text{ for } x\in\R^N.
\]
Since
\[
\I_N^- z(x)\leq \sum_{i=1}^N \I_{e_i}z(x)=\I_{e_N} z(x) \ \ \text{ for } x\in \R_+^N,
\]
by Proposition \ref{0801prop1} and Remark \ref{T49-5} we have 
\[
\I_N^- z(x)+\beta z^{1-\frac{2s}\mu}(x)\leq 0 \ \ \text{ for } x\in\R_+^N,
\]
where $\beta=-c_{s,\mu}C_s$ is a positive constant. 

\smallskip

We set 
\[
w(x)=\min\{\phi(x),\,z(x)\} \ \ \text{ for } x\in\R^N. 
\]
It is obvious that $w\in C(\R^N)\cap L^\infty(\R^N)$ and that 
\[
w(x)=0 \ \ \text{ for } x\in\R^N\backslash\R_+^N \ \ \text{ and } \ \ w(x)>0 \ \ \text{ for } x\in\R_+^N.  
\]
Moreover the function $w$ is bounded from above. Indeed, if $x\in\R_+^N$, then
\begin{equation}\label{030124eq1}
w(x)\leq\phi(x)={\left|x+Re_N\right|}^{-\gamma}\leq R^{-\gamma}\leq1.
\end{equation}
Let $x\in\R_+^N$. Assume that $w(x)=z(x)$. We have 
\[
\I_N^- w(x)+\beta w^p(x)\leq \I^-_N z(x)+\beta z^{1-\frac{2s}\mu}(x)z^{p-1+\frac {2s}\mu}(x)
\]
and, since $p>1$ and hence $z^{p-1+\frac{2s}\mu}(x)\leq 1$ by \eqref{030124eq1}, 
\[
\I_N^- w(x)+\beta w^p(x)\leq \I_N^- z(x)+\beta z^{1-\frac{2s}\mu}(x)\leq 0.
\]
Assume next that $w(x)=\phi(x)$. We have 
\[
\I_N^- w(x)+\alpha w^p(x)\leq \I_N^- \phi(x)+\alpha \phi^p(x)\leq 0.
\]
Hence, we have 
\[
\I_N^- w(x)+\min\{\alpha,\beta\} w^p(x)\leq 0 \ \ \text{ for } x\in\R_+^N. 
\]
Let $\varepsilon>0$ be a constant to be fixed. Observe that for $x\in\R_+^N$,
\[
\I_N^- (\varepsilon w)(x)+(\varepsilon w)^p(x)
=\varepsilon \I_N^- w(x)+\varepsilon^p w^p(x)
\leq \varepsilon(\varepsilon^{p-1} -\min\{\alpha,\beta\})w^p(x).
\]
Noting that  $p>1$, we fix $\varepsilon>0$ so that 
\[
\varepsilon^{p-1}-\min\{\alpha,\,\beta\}\leq 0,
\]
and conclude that 
\[
\I_N^- (\varepsilon w)(x)+(\varepsilon w)^p(x)\leq 0 \ \ \text{ for } x\in \R_+^N.
\]

\end{proof}

\subsection{Theorem \ref{th-singular}}

\begin{proof}[Proof of Theorem \ref{th-singular}]
Let $u(x)=M(x_N)_+^\mu$ with $M>0$ and $\mu\in(0,s)$ to be fixed. 

\smallskip
We first consider the case $\I=\I^-_k$ for any $1\leq k\leq N$. 
Using the minimality of $\I$ among $k$-dimensional orthonormal subsets of $\R^N$, we have
\begin{equation}\label{1109peq1}
\I u(x)\leq\sum_{i=N-k+1}^N\I_{e_i}u(x)\qquad\forall x\in\R^N_+.
\end{equation}
Moreover, since $u$ is constant in all directions that are orthogonal to $e_N$, we get (whenever $k\geq2$)
\begin{equation}\label{1109peq2}
\I_{e_i}u(x)=0\qquad\forall i=N-k+1,\ldots,N-1.
\end{equation}
By \eqref{1109peq1}-\eqref{1109peq2} it follows that
\begin{equation*}
\I u(x)\leq\I_{e_N}u(x) \qquad\forall x\in\R^N_+.
\end{equation*}
In view of Proposition \ref{0801prop1} and Remark \ref{T49-5} we infer that there exists $C_{s,\mu}$ such that
\begin{equation}\label{129eq2}
\begin{split}
\I u(x)+u^p(x)&\leq M C_{s,\mu} x_N^{\mu-2s}+M^p x_N^{\mu p}\\
&=M  x_N^{\mu-2s}\left(C_{s,\mu}+M^{p-1} x_N^{\mu (p-1)+2s}\right)\qquad\forall x\in\R^N_+.
\end{split}
\end{equation}
Choosing $\mu=\frac{2s}{1-p}\in(0,s)$,  it turns out that $C_{s,\mu}<0$ and 
\begin{equation}\label{129eq3}
\I u(x)+u^p(x)\leq M  x_N^{\mu-2s}\left(C_{s,\mu}+M^{p-1}\right)=0 \qquad\forall x\in\R^N_+
\end{equation}
by taking $M={\left(\frac{1}{|C_{s,\mu}|}\right)}^\frac{1}{1-p}$. 

\smallskip
Consider now the case $\I=\I^+_N$. For any orhonormal basis $\left\{\xi_i\right\}_{i=1}^N$ one has
$$
\sum_{i=1}^N{\left\langle \xi_i,e_N\right\rangle}^2=1.
$$
Then, denoting by $\bar i\in\left\{1,\ldots,N\right\}$ the index such that 
$$
{\left\langle \xi_{\bar i},e_N\right\rangle}^2=\max_{i=1,\ldots,N}{\left\langle \xi_i,e_N\right\rangle}^2,
$$
it turns out that 
\begin{equation}\label{129eq1}
\left|\left\langle \xi_{\bar i},e_N\right\rangle\right|\geq\frac{1}{\sqrt{N}}.
\end{equation}
Again by Proposition \ref{0801prop1}, we infer that 
$$
\sum_{i=1}^N\I_{\xi_i}u(x)\leq \I_{\xi_{\bar i}}u(x)=MC_{s,\mu}\left|\left\langle \xi_{\bar i},e_N\right\rangle\right|^{2s}x_N^{\mu-2s} \qquad\forall x\in\R^N_+
$$
with $C_{s,\mu}<0$. Thus, by \eqref{129eq1},
$$
\sum_{i=1}^N\I_{\xi_i}u(x)\leq \frac{MC_{s,\mu}}{{N}^s}x_N^{\mu-2s} \qquad\forall x\in\R^N_+.
$$ 
Since the right-hand side of the above inequality does not depend on the particular choice of $\left\{\xi_i\right\}_{i=1}^N$, we have
$$
\I^+_Nu(x)\leq \frac{MC_{s,\mu}}{N^s}x_N^{\mu-2s} \qquad\forall x\in\R^N_+.
$$
Then the proof of the desired inequality  $$\I^+_Nu(x)+u^p(x)\leq0\qquad\forall x\in\R^N_+$$ follows as in \eqref{129eq2}-\eqref{129eq3} choosing $\mu=\frac{2s}{1-p}$ and $M={\left(\frac{{N}^s}{|C_{s,\mu}|}\right)}^\frac{1}{1-p}$.
\end{proof}
\begin{remark}
{\rm  
It is worth pointing out that, as far as the operators $\I^+_k$ with $k<N$ are concerned, the functions $v(x)=(x_N)_+^\mu$ are solutions of 
\begin{equation}\label{139eq1}
\I^+_ku(x)=0\;\quad\text{in $\R^N_+$}
\end{equation}
for any $\mu\in(0,s]$. Indeed, by Proposition \ref{0801prop1}, for any $k$-dimensional orthonormal set $\left\{\xi_i\right\}_{i=1}^k$ it holds
\begin{equation*}
\sum_{i=1}^k\I_{\xi_i}u(x)\leq0\;\quad\text{in $\R^N_+$}
\end{equation*}
and, with the particular choice of the first $k$ vectors of the canonical basis, we also have
\begin{equation}\label{129eq5}
\sum_{i=1}^k\I_{e_i}u(x)=0\;\quad\text{in $\R^N_+$.}
\end{equation}
 Hence \eqref{139eq1} holds and, differently from  \eqref{129eq3}, we deduce that
$$
\I^+_ku(x)+u^p(x)>0\;\quad\text{in $\R^N_+$.}
$$ 
}
\end{remark}

\appendix
\section{Appendix}\label{A}

\begin{proposition}\label{PropS}
Let either $1<p<q$ or $q<p<1$. If $u\in LSC(\R^N_+)\cap\mathcal S$ is a positive viscosity supersolution of 
\begin{equation}\label{9924eq2}
\I u(x) +u^p(x)=0\;\quad\text{in $\R^N_+$}, 
\end{equation}
where $\I$ denotes one of the operators $\I^\pm_k$, then  the function
\begin{equation*}
v(x)={\left(\frac{p-1}{q-1}\right)}^\frac{1}{q-1}u^{\frac{p-1}{q-1}}(x)
\end{equation*}
is a positive viscosity supersolution of 
\begin{equation}\label{9924eq3}
\I v(x) +v^q(x)=0\;\quad\text{in $\R^N_+$}.
\end{equation}
\end{proposition}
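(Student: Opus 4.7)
My plan is to transfer the supersolution property from $u$ to $v$ via a test-function/tangent-line argument, exploiting that $t\mapsto t^\alpha$ with $\alpha:=\frac{p-1}{q-1}\in(0,1)$ is concave (both ranges $1<p<q$ and $q<p<1$ indeed give $\alpha\in(0,1)$). Setting $c:=\alpha^{1/(q-1)}$ so that $v=cu^\alpha$, the two algebraic identities that will do the bookkeeping are $c^{q-1}=\alpha$ (equivalently $c^q=c\alpha$) and $\alpha q=\alpha-1+p$; these are precisely what is needed so that the estimate coming from concavity cancels with the power $p$ on the right-hand side and reproduces $v^q$.

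Concretely, given $\varphi\in C^2(\overline{B_r(x_0)})$ touching $v$ from below at $x_0\in\R^N_+$, I would set $\psi:=(\varphi/c)^{1/\alpha}$. Since $v(x_0)>0$, after shrinking $r$ the function $\psi$ is $C^2$ on $\overline{B_r(x_0)}$, satisfies $\psi(x_0)=u(x_0)$, and by monotonicity of $t\mapsto t^{1/\alpha}$ verifies $\psi\leq u$ in $B_r(x_0)$; hence $\psi$ is an admissible test function from below for $u$ at $x_0$. Extending $\psi$ by $u$ and $\varphi$ by $v$ outside $B_r(x_0)$ to obtain $\tilde\psi,\tilde\varphi$, the identity $\tilde\varphi=c\tilde\psi^\alpha$ holds on all of $\R^N$. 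For every direction $\xi$, the tangent-line inequality for $f(t)=t^\alpha$ at $u(x_0)>0$ yields
\[
f(\tilde\psi(x_0+\tau\xi))+f(\tilde\psi(x_0-\tau\xi))-2f(\tilde\psi(x_0))\leq\alpha u^{\alpha-1}(x_0)\bigl[\tilde\psi(x_0+\tau\xi)+\tilde\psi(x_0-\tau\xi)-2\tilde\psi(x_0)\bigr].
\]
Integrating this pointwise estimate against $C_s\tau^{-1-2s}d\tau$ and multiplying by $c>0$ produces $\I_\xi\tilde\varphi(x_0)\leq c\alpha u^{\alpha-1}(x_0)\,\I_\xi\tilde\psi(x_0)$. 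Since the multiplicative factor is a positive constant independent of $\xi$, it commutes with both inf and sup over $k$-orthonormal frames, and the same inequality holds with $\I_\xi$ replaced by any $\I\in\{\I_k^\pm\}$.

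Invoking the supersolution property for $u$ gives $\I\tilde\psi(x_0)\leq-u^p(x_0)$, so
\[
\I\tilde\varphi(x_0)\leq-c\alpha\,u^{\alpha-1+p}(x_0)=-c^q u^{\alpha q}(x_0)=-v^q(x_0),
\]
which is precisely the viscosity supersolution inequality \eqref{9924eq3} for $v$ at $x_0$. The remaining checks, that $v\in\mathcal S$ (the growth exponent is multiplied by $\alpha<1$, so it stays below $2s$) and $v\geq 0$ outside $\R^N_+$, are routine. The step I expect to require the most care is ensuring the tangent-line bound at points where $\tilde\psi$ vanishes (in $\R^N\setminus\R^N_+$): this reduces to the elementary inequality $d^\alpha\geq\alpha d^\alpha$ for $d>0$ and $\alpha\in(0,1)$, which holds, so the argument extends seamlessly to the full integration range. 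Importantly, no ellipticity estimate or Harnack-type tool is needed beyond the pointwise concavity of $t\mapsto t^\alpha$, so the argument works uniformly for all the operators $\I_k^\pm$.
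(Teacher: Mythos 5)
Your argument is correct and is essentially the same as the paper's: both proofs rest on the concavity of $t\mapsto t^{\alpha}$ (equivalently, the mean-value inequality $b-a\le\beta a^{(\beta-1)/\beta}(b^{1/\beta}-a^{1/\beta})$ used in the paper), integrated against $C_s\tau^{-1-2s}d\tau$ and passed through the extremal operators by observing that the multiplicative factor is a positive constant independent of the frame. The only cosmetic difference is that you transform the test function for $v$ directly into one for $u$ via $\psi=(\varphi/c)^{1/\alpha}$, whereas the paper keeps the test function $\psi$ for $v$ and writes the chain rule bound in terms of $\psi^{1/\beta}$; you also explicitly verify the tangent-line bound at points where $\tilde\psi$ may vanish, while the paper sidesteps this by reducing to $\psi>0$ everywhere (the nonnegative case being deferred to Proposition \ref{PropS2}).
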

\begin{proof}
Let 
\begin{equation}\label{1111eq1}
\beta=\frac{p-1}{q-1}=\frac{1-p}{1-q}\in(0,1)\,,\quad\alpha=\beta^\frac{1}{q-1}.
\end{equation}
In this way
$$
v(x)=\alpha u^\beta(x)
$$
with
\begin{equation}\label{109eq1}
\beta-1+p=\beta q\;,\quad\,\alpha\beta=\alpha^q.
\end{equation}
In order prove that $v$ is a viscosity supersolution of \eqref{9924eq3}, let $\varphi\in C^2(\overline{B_\delta(x_0)})$, with $\delta>0$, touching $v$ from below at $x_0\in\R^N_+$. Then if we define
$$
\psi=\begin{cases}
\varphi & \text{in $B_\delta(x_0)$}\\
v & \text{in $\R^N\backslash B_\delta(x_0)$}
\end{cases}
$$
 we have to show that 
\begin{equation}\label{109eq2}
\I\psi(x_0)+\psi^q(x_0)\leq0.
\end{equation}
Note that $\I\psi(x_0)$ is nonincreasing with respect the parameter $\delta$ in the definition of $\psi$. Hence, without loss of generality, we may assume  $\delta$ sufficiently small such that $\psi(x)>0$ for any $x\in\R^N$.
 
The key point to obtain \eqref{109eq2} is the following inequality: if $\xi$ is any unit vector of $\R^N$, then
\begin{equation}\label{109eq3}
\I_\xi\psi(x_0)\leq\beta\psi^{\frac{\beta-1}{\beta}}(x_0)\I_\xi\psi^\frac1\beta(x_0).
\end{equation}
We first show that \eqref{109eq2} is a consequence of the above inequality, then we shall prove \eqref{109eq3}.\\
 Take an orthonormal set $\left\{\xi_i\right\}_{i=1}^k$. Using \eqref{109eq3} with $\xi$ replaced by $\xi_i$ and summing-up in $i=1,\ldots,k$ we obtain
$$
\sum_{i=1}^k\I_{\xi_i}\psi(x_0)\leq\beta\psi^{\frac{\beta-1}{\beta}}(x_0)\sum_{i=1}^k\I_{\xi_i}\psi^\frac1\beta(x_0)=\beta{\left(\alpha\psi^{\beta-1}(x_0)\right)}^{\frac1\beta}\sum_{i=1}^k\I_{\xi_i}\left(\frac\psi\alpha\right)^\frac1\beta(x_0).
$$
Then if we take the supremum among all possible $k$-orthonormal sets in the case $\I=\I^+_k$ or the infimum if $\I=\I^-_k$, we infer that
\begin{equation}\label{109eq4}
\I\psi(x_0)\leq\beta{\left(\alpha\psi^{\beta-1}(x_0)\right)}^{\frac1\beta}\I\left(\frac\psi\alpha\right)^\frac1\beta(x_0).
\end{equation}
Now, since
$$
\left(\frac\psi\alpha\right)^\frac1\beta=\begin{cases}
\left(\frac\varphi\alpha\right)^\frac1\beta & \text{in $B_\delta(x_0)$}\\
u & \text{in $\R^N\backslash B_\delta(x_0)$}
\end{cases}
$$
is a test function for $u$ at $x_0$, by the supersolution property for $u$ we obtain that
\begin{equation}\label{109eq5}
\I\left(\frac\psi\alpha\right)^\frac1\beta(x_0)\leq-u^p(x_0).
\end{equation}
Using \eqref{109eq4}-\eqref{109eq5} and the fact that
$$
\psi(x_0)=v(x_0)=\alpha u^\beta(x_0),
$$
we get 
\begin{equation}\label{109eq6}
\I\psi(x_0)\leq-\alpha\beta u^{\beta-1+p}(x_0).
\end{equation}
In view of \eqref{109eq1}, the equality 
\begin{equation}\label{109eq7}
 \alpha\beta u^{\beta-1+p}(x_0)=\psi^q(x_0)
\end{equation} 
holds and then the desired inequality \eqref{109eq2} follows from \eqref{109eq6}-\eqref{109eq7}.

\smallskip
To complete the proof it remains to show \eqref{109eq3}. For this we use the following elementary inequality (which is a straightforward application of the mean value theorem to the function $f(t)=t^\frac1\beta)$: for any positive number $a$, $b$  and $\beta\in(0,1)$ it holds 
\begin{equation}\label{109eq8}
b-a\leq\beta\, a^\frac{\beta-1}{\beta}\left(b^\frac1\beta-a^\frac1\beta\right).
\end{equation}
Using \eqref{109eq8} with $a=\psi(x_0)$ and $b=\psi(x_0\pm\tau\xi)$ we have that for any $\tau>0$
$$
\psi(x_0+\tau\xi)+\psi(x_0-\tau\xi)-2\psi(x_0)\leq\beta\psi^{\frac{\beta-1}{\beta}}(x_0)\left(\psi^\frac1\beta(x_0+\tau\xi)+\psi^\frac1\beta(x_0-\tau\xi)-2\psi^\frac1\beta(x_0)\right).
$$
Then \eqref{109eq3} easily follows multiplying both side of the previous inequality by $\tau^{-(1+2s)}$ and integrating.
\end{proof}

In the case  $p,q>0$,  Proposition \ref{PropS} holds true also in the class of nonnegative functions. 

\begin{proposition}\label{PropS2}
Let either $1<p<q$ or $0<q<p<1$. If $u\in LSC(\R^N_+)\cap\mathcal S$ is a nonnegative viscosity supersolution of 
\begin{equation}\label{99924eq2}
\I u(x) +u^p(x)=0\;\quad\text{in $\R^N_+$}, 
\end{equation}
where $\I$ denotes one of the operators $\I^\pm_k$, then  the function
\begin{equation*}
v(x)={\left(\frac{p-1}{q-1}\right)}^\frac{1}{q-1}u^{\frac{p-1}{q-1}}(x)
\end{equation*}
is a nonnegative viscosity supersolution of 
\begin{equation}\label{99924eq3}
\I v(x) +v^q(x)=0\;\quad\text{in $\R^N_+$}.
\end{equation}
\end{proposition}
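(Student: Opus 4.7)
The plan is to follow the blueprint of Proposition \ref{PropS}, which handles every point where $v$ is strictly positive; the new ingredient is a direct treatment of the points at which $v$ vanishes. Fix $x_0\in\R^N_+$ and a test function $\varphi\in C^2(\overline{B_\delta(x_0)})$ touching $v$ from below at $x_0$, and set $\psi=\varphi$ on $B_\delta(x_0)$ and $\psi=v$ outside; the target inequality is $\I\psi(x_0)+v^q(x_0)\leq 0$.

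If $v(x_0)>0$, then $u(x_0)>0$ and, by shrinking $\delta$, one may assume $\varphi>0$ on $\overline{B_\delta(x_0)}$; since $v\geq 0$, this makes $\psi\geq 0$ in $\R^N$ with $\psi(x_0)>0$. The key pointwise inequality \eqref{109eq3}, which rearranges to the weighted AM--GM
\[
\beta\, b^{1/\beta}+(1-\beta)\,a^{1/\beta}\;\geq\; b\,a^{(1-\beta)/\beta},
\]
is valid for $a>0$ and $b\geq 0$, hence applies with $a=\psi(x_0)$ and $b=\psi(x_0\pm\tau\xi)$. Moreover $(\psi/\alpha)^{1/\beta}$ remains $C^2$ inside $B_\delta(x_0)$ thanks to $\varphi>0$, and coincides with $u$ outside, so it is a legitimate test function for the supersolution $u$. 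The remainder of the computation reproduces verbatim that of Proposition \ref{PropS}.

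Suppose now $v(x_0)=0$, equivalently $u(x_0)=0$. Then $v^q(x_0)=0$ (using $q>0$) and it suffices to show $\I\psi(x_0)\leq 0$. Testing the supersolution $u$ at $x_0$ with the zero function yields $\I\psi_0(x_0)\leq 0$, where $\psi_0:=0$ on $B_\delta(x_0)$ and $\psi_0:=u$ outside; meanwhile, for every $\xi\in\mathbb S^{N-1}$,
\[
\I_\xi\psi_0(x_0)=C_s\int_\delta^{+\infty}\frac{u(x_0+\tau\xi)+u(x_0-\tau\xi)}{\tau^{1+2s}}\,d\tau\;\geq\;0.
\]
For the maximal operator $\I^+_k$, the two inequalities combined with the fact that every unit $\xi$ extends to an orthonormal $k$-frame force $\I_\xi\psi_0(x_0)=0$ for all $\xi\in\mathbb S^{N-1}$; by lower semicontinuity of $u$, this gives $u\equiv 0$ on every ray from $x_0$, and letting $\delta\downarrow 0$ yields $u\equiv 0$ on $\R^N$, so that $v\equiv 0$ and the required inequality is trivial. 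For the minimal operator $\I^-_k$, the infimum is attained on the compact Stiefel manifold $\mathcal V_k$, so a minimizing frame $\{\xi_i^\delta\}_{i=1}^k$ exists along which each of the above integrals vanishes; hence $u\equiv 0$ on $\{x_0+\tau\xi_i^\delta:\,|\tau|>\delta\}$. A diagonal extraction as $\delta\downarrow 0$, again using the compactness of $\mathcal V_k$, produces a single frame $\{\xi_i^\star\}_{i=1}^k$ along whose full lines $u$, and hence $v=\alpha u^\beta$, is identically zero. Since $\varphi\leq v$ in $B_\delta(x_0)$, we get $\psi\leq 0$ along these lines with $\psi(x_0)=0$, whence each $\I_{\xi_i^\star}\psi(x_0)\leq 0$, and
\[
\I^-_k\psi(x_0)\;\leq\;\sum_{i=1}^k\I_{\xi_i^\star}\psi(x_0)\;\leq\;0.
\]
The case $\I=\I^-_N$ is handled identically to $\I^-_k$; alternatively, for $\I^+_k$ and $\I^-_N$ one may appeal directly to the strong maximum principle stated in the Introduction.

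The main technical obstacle is the diagonal/compactness step in the second case: the minimizing frame $\{\xi_i^\delta\}$ depends on $\delta$, and one must reconcile the $\delta$-dependent frames with a single limiting frame along which to test $v$ at $x_0$. The compactness of the Stiefel manifold $\mathcal V_k$, together with the lower semicontinuity of $u$ that promotes almost-everywhere vanishing on a ray to identical vanishing, are the two ingredients that close the argument.
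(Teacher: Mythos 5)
Your proposal is correct, but it takes a genuinely different route from the paper. The paper regularizes $u$ to $u_\varepsilon=u+\varepsilon$, observes that $v_\varepsilon=\alpha u_\varepsilon^\beta$ satisfies a perturbed inequality $\I v_\varepsilon+v_\varepsilon^q\le g_\varepsilon$ by re-running the computation of Proposition \ref{PropS}, shows $g_\varepsilon\to 0$ locally uniformly, and closes with a dedicated stability lemma (Lemma \ref{stability}). You instead split on whether $v(x_0)>0$ or $v(x_0)=0$. When $v(x_0)>0$ your observation that the elementary inequality \eqref{109eq8} persists for $a>0$, $b\ge 0$ is exactly what is needed, and the computation of Proposition \ref{PropS} carries over. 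When $v(x_0)=0$ you degenerate to the zero test function, use nonnegativity of the integrand to infer that each $\I_\xi\psi_0(x_0)\ge 0$, and then combine this with the supersolution inequality to force vanishing along suitable directions.

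Two points deserve more care than your sketch gives. First, in the $\I^-_k$ case the claim that ``the infimum is attained on the compact Stiefel manifold'' is not automatic: the map $\xi\mapsto\I_\xi\psi_0(x_0)$ is in general only lower semicontinuous in $\xi$ (via Fatou, exploiting that the integrand is nonnegative and $u$ is lower semicontinuous), not continuous, because $u$ is merely LSC and in $\mathcal S$; LSC on a compact set does suffice to attain the infimum, but this should be said, as it is the load-bearing ingredient for your degenerate case when $k<N$ and no strong minimum principle is available. Second, the passage from ``integral equals zero'' to ``$u\equiv 0$ on the ray'' uses the combination of $u\ge 0$, vanishing a.e., and lower semicontinuity, and the diagonal/compactness step as $\delta\downarrow 0$ again uses LSC to transfer the vanishing to the limiting frame; these are correct but should be made explicit. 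Overall your argument is more hands-on and avoids the stability lemma, whereas the paper's regularization approach is cleaner, requires no attainment/measure-theoretic considerations, and yields a reusable convergence tool; your route, once the LSC points above are spelled out, is a valid and somewhat more elementary alternative.
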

\begin{proof}
Fix any $\varepsilon>0$ and note that $u_\varepsilon(x):=u(x)+\varepsilon$ is a positive supersolution of 
$$
\I u_\varepsilon(x) +u_\varepsilon^p(x)=f_\varepsilon(x)\;\quad\text{in $\R^N_+$},
$$
where $$
f_\varepsilon(x)=(u+\varepsilon)^p(x)-u^p(x).
$$
As $\varepsilon\to0^+$, then $f_\varepsilon\to0$ locally uniformly in $\R^N_+$. 

Set $v_\varepsilon(x)=\alpha u_\varepsilon^\beta(x)$ with $\alpha, \beta$ defined in \eqref{1111eq1}-\eqref{109eq1}. Note that 
\begin{equation}\label{1111eq4}
v_\varepsilon>0\quad\text{in $\R^N$ \;and\;}\quad \lim_{\varepsilon\to0^+}v_\varepsilon(x)=v(x)\quad\text{for $x\in\R^N$}.
\end{equation} 
With the same argument used in Proposition \ref{PropS}, it turns out that $v_\varepsilon$ is a viscosity supersolution of 
\begin{equation}\label{1111eq2}
\I v_\varepsilon(x)+v_\varepsilon^q(x)=g_\varepsilon(x)\qquad\text{in $\R^N_+$,}
\end{equation}
where 
\begin{equation}\label{1111eq3}
\begin{split}
g_\varepsilon(x):&=\alpha\beta{(u+\varepsilon)}^{\beta-1}(x)f_\varepsilon(x)\\
&=\alpha\beta\left({(u+\varepsilon)}^{p+\beta-1}(x)-({(u+\varepsilon)}^{\beta-1}u^p)(x)\right).
\end{split}
\end{equation}
Note that, as $\varepsilon\to0^+$, $g_\varepsilon\to0$ locally uniformly in $\R^N_+$. To check this observe that for any $0<\varepsilon<1$ and $0<\lambda<\Lambda$, if $\lambda\leq u(x)\leq\Lambda$, then for some constant $C=C(\lambda,\Lambda,p)>0$ (the Lipschitz constant of $r\mapsto r^p$ on $[\lambda,\Lambda+1]$)
$$
0\leq g_\varepsilon(x)\leq\alpha\beta\lambda^{\beta-1}C\varepsilon,
$$
and if $u(x)\leq\lambda$, then
$$
0\leq g_\varepsilon(x)\leq\alpha\beta{(\lambda+\varepsilon)}^{\beta q}.
$$
Choose any compact $K\subset\R^N_+$. Since $u$ is locally bounded in $\R^N_+$, there is a constant $\Lambda>0$ such that $u\leq\Lambda$ on $K$. Fix any $\gamma>0$. Choose $\lambda\in(0,\Lambda)$ so that
$$
\alpha\beta\lambda^{\beta q}<\gamma.
$$
Then choose $\delta\in(0,1)$ so that 
$$
\alpha\beta{(\lambda+\delta)}^{\beta q}<\gamma\quad\text{and}\quad \alpha\beta\lambda^{\beta-1}C\delta<\gamma.
$$
If $0<\varepsilon<\delta$ and $x\in K$, then
$$
g_\varepsilon(x)\leq\alpha\beta\max\left\{\lambda^{\beta-1}C\varepsilon,{(\lambda+\varepsilon)}^{\beta q}\right\}\leq\alpha\beta\max\left\{\lambda^{\beta-1}C\delta,{(\lambda+\delta)}^{\beta q}\right\}<\gamma.
$$
This show that, as $\varepsilon\to0^+$, $g_\varepsilon\to0$ uniformly on $K$. The stability of the viscosity property under the local uniform convergence, see next Lemma \ref{stability}, gives the required conclusion.
\end{proof}

\begin{lemma}\label{stability}Let $q>0$ and $v\in LSC(\R^N_+)\cap\mathcal S$. Let $\I$ be one of the operators $\I^\pm_k$. Assume that for any $\varepsilon>0$, $v_\varepsilon\in
LSC(\R^N)\cap \mathcal{S}$ is a viscosity supersolution of 
\[
\I v_\varepsilon(x)+ v_\varepsilon^q(x) \leq g_\varepsilon(x) \ \ \text{ in } \R^N_+, 
\]
where $g_\varepsilon \to 0$ locally uniformly in $\R^N_+$ as $\varepsilon\to 0^+$,
and that for some constant $C>0$,
\begin{equation}\label{11.13}
v_\varepsilon>v \geq -C\ \ \text{ in } \R^N \ \ \text{ and } \ \ v(x)=\lim_{\varepsilon\to 0^+}v_{\varepsilon}(x) \ \ \text{ for } x\in\R^N.
\end{equation}
Then, $v$ 
is a viscosity supersolution of 
\[
\I v(x)+v^q(x)\leq 0 \ \ \text{ in } \R^N_+.
\] 
\end{lemma}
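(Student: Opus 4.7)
The plan is to verify Definition~\ref{defviscsol} directly for $v$. Fix $x_0\in\R^N_+$ and a test function $\varphi\in C^2(\overline{B_\delta(x_0)})$ with $B_\delta(x_0)\subseteq\R^N_+$, $\varphi\leq v$ in $B_\delta(x_0)$ and $\varphi(x_0)=v(x_0)$; set $\psi=\varphi$ on $B_\delta(x_0)$ and $\psi=v$ elsewhere. The standard perturbation $\varphi_n(x):=\varphi(x)-n^{-1}|x-x_0|^2$ satisfies an explicit estimate $|\I_\xi\psi_n(x_0)-\I_\xi\psi(x_0)|\leq C(\delta,s)/n$ uniformly in $\xi\in\SN$, which together with the compactness of $\mathcal V_k$ yields $\I_k^\pm\psi_n(x_0)\to\I_k^\pm\psi(x_0)$. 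I may therefore assume at the outset that the contact of $\varphi$ with $v$ at $x_0$ is strict on $\overline{B_\delta(x_0)}$.

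For each $\varepsilon$ small, the function $v_\varepsilon-\varphi$ is lower semicontinuous and attains its minimum on the compact set $\overline{B_\delta(x_0)}$ at some $x_\varepsilon$. Setting $m_\varepsilon=(v_\varepsilon-\varphi)(x_\varepsilon)$, the bound $m_\varepsilon\leq v_\varepsilon(x_0)-v(x_0)\to 0$ and, at any limit point $\bar x$ of $x_\varepsilon$, the inequality $\liminf m_\varepsilon\geq v(\bar x)-\varphi(\bar x)\geq 0$ (obtained from $v_\varepsilon\geq v$, $v\in LSC$, and continuity of $\varphi$) together with the strict contact force $\bar x=x_0$; thus $x_\varepsilon\to x_0$ and $m_\varepsilon\to 0$. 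For $\varepsilon$ small, pick $R>0$ with $B_R(x_\varepsilon)\subseteq B_\delta(x_0)$; then $\tilde\varphi_\varepsilon:=\varphi+m_\varepsilon$ touches $v_\varepsilon$ from below at $x_\varepsilon$ on $B_R(x_\varepsilon)$. Writing $\psi_\varepsilon=\tilde\varphi_\varepsilon$ on $B_R(x_\varepsilon)$ and $\psi_\varepsilon=v_\varepsilon$ elsewhere, the supersolution hypothesis delivers
\[
\I\psi_\varepsilon(x_\varepsilon)+v_\varepsilon^q(x_\varepsilon)\leq g_\varepsilon(x_\varepsilon).
\]
The right-hand side tends to $0$ by local uniform convergence of $g_\varepsilon$, and $v_\varepsilon^q(x_\varepsilon)=(\varphi(x_\varepsilon)+m_\varepsilon)^q\to v(x_0)^q$ by continuity. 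The whole conclusion therefore reduces to the lower semicontinuity $\I\psi(x_0)\leq\liminf_{\varepsilon\to 0^+}\I\psi_\varepsilon(x_\varepsilon)$.

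To handle the inf/sup structure of $\I=\I_k^\pm$, I use the compactness of $\mathcal V_k$ together with the lower semicontinuity of $\{\xi_i\}\mapsto\sum_i\I_{\xi_i}\psi_\varepsilon(x_\varepsilon)$ (from Fatou applied to the defining integrals). For $\I_k^+$, fixing any $\{\xi_i\}\in\mathcal V_k$ and using $\I_k^+\psi_\varepsilon(x_\varepsilon)\geq\sum_i\I_{\xi_i}\psi_\varepsilon(x_\varepsilon)$, a $\liminf$ followed by the sup in $\{\xi_i\}$ gives the result once the one-directional claim is proved. For $\I_k^-$, I pick $\{\xi_i^\varepsilon\}$ realizing the inf, extract a subsequence with $\{\xi_i^\varepsilon\}\to\{\xi_i^*\}$, and invoke the one-directional claim along each $\xi_i^\varepsilon\to\xi_i^*$. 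The one-directional statement reads: for any unit vectors $\xi^\varepsilon\to\xi^*$,
\[
\I_{\xi^*}\psi(x_0)\leq\liminf_{\varepsilon\to 0^+}\I_{\xi^\varepsilon}\psi_\varepsilon(x_\varepsilon).
\]
I split the integral at $\tau=R$. On $(0,R)$ one has $\psi_\varepsilon(x_\varepsilon\pm\tau\xi^\varepsilon)=\varphi(x_\varepsilon\pm\tau\xi^\varepsilon)+m_\varepsilon$, so $m_\varepsilon$ cancels in the second difference, the $C^2$ bound on $\varphi$ provides the integrable dominator $C\tau^{1-2s}$, and dominated convergence yields the limit. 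On $(R,+\infty)$ the integrand
\[
F_\varepsilon(\tau)=\frac{v_\varepsilon(x_\varepsilon+\tau\xi^\varepsilon)+v_\varepsilon(x_\varepsilon-\tau\xi^\varepsilon)-2(\varphi(x_\varepsilon)+m_\varepsilon)}{\tau^{1+2s}}
\]
is bounded below by $-K\tau^{-(1+2s)}\in L^1(R,+\infty)$ thanks to $v_\varepsilon\geq -C$ and the uniform bound on $\varphi+m_\varepsilon$, while the combination $v_\varepsilon\geq v$ and $v\in LSC$ gives pointwise $\liminf_\varepsilon F_\varepsilon(\tau)\geq\bigl(v(x_0+\tau\xi^*)+v(x_0-\tau\xi^*)-2\varphi(x_0)\bigr)\tau^{-(1+2s)}$, which is exactly the tail integrand of $\I_{\xi^*}\psi(x_0)$ because $\psi=v$ on $B_R(x_0)^c$ and $\psi(x_0)=\varphi(x_0)$. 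Fatou's lemma applied to $F_\varepsilon+K\tau^{-(1+2s)}\geq 0$ closes the tail estimate. The main obstacle is precisely this last Fatou step: with only the lower bound $v_\varepsilon\geq-C$ available as a uniform bound, one must combine it with the pointwise inequality $v_\varepsilon\geq v$ and the lower semicontinuity of $v$ to push the $\liminf$ inside the tail integral along simultaneously moving directions $\xi^\varepsilon$ and base points $x_\varepsilon$.
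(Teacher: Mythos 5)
Your argument is correct and reaches the same conclusion, but it follows a genuinely different technical route from the paper's.

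The paper first reduces, via the lower bound $v\geq -C$ from \eqref{11.13}, to the case where the test function $\varphi$ is a \emph{fixed, globally smooth, bounded} function with $\varphi\leq v$ on all of $\R^N$, $\varphi(x)=-R$ for $|x|\geq R$, and $\inf_{\R^N\setminus B_r(x_0)}(v-\varphi)>0$ for every $r>0$. After locating the minimizers $x_\varepsilon$ of $v_\varepsilon-\varphi$ (exactly as you do), the paper applies the supersolution property and then, crucially, invokes the global comparison property \eqref{well} to replace the $\varepsilon$-dependent patching by the \emph{same} $\varphi$: one obtains $\I\varphi(x_\varepsilon)+v_\varepsilon^q(x_\varepsilon)\leq g_\varepsilon(x_\varepsilon)$ with a single $\varphi$. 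The limit $\I\varphi(x_\varepsilon)\to\I\varphi(x_0)$ is then a clean dominated-convergence argument with the dominator $\min\{\tau^{-1-2s},\tau^{1-2s}\}$ coming from the bound \eqref{eq.lem1.1}, since $\|\varphi\|_\infty$ and $\|D^2\varphi\|_\infty$ are finite. You, by contrast, keep the $\varepsilon$-dependent patched functions $\psi_\varepsilon$ all the way through, split the integral at a fixed radius, use dominated convergence for the near-field (where the integrand is controlled by $\|D^2\varphi\|_\infty\tau^{1-2s}$), and replace the paper's DCT in the tail by Fatou's lemma with the integrable lower barrier $-K\tau^{-1-2s}$ coming from $v_\varepsilon\geq -C$ together with boundedness of $\varphi+m_\varepsilon$. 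The treatment of the inf/sup in $\I^\pm_k$ (fixed frame for $\I^+_k$, near-optimal frames and subsequence extraction for $\I^-_k$) is essentially the same in both. Your version buys you the ability to verify the patching-based Definition~\ref{defviscsol} directly, without the ``we may assume $\varphi$ is globally smooth, bounded, and equal to $-R$ at infinity'' reduction on which the paper's argument rests; the price is the slightly heavier simultaneous Fatou passage in moving base points $x_\varepsilon$, directions $\xi^\varepsilon$, and functions $v_\varepsilon$, which you handle correctly.

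Two minor remarks. First, your initial perturbation $\varphi_n(x)=\varphi(x)-n^{-1}|x-x_0|^2$ to enforce strict contact is a useful extra reduction the paper avoids (because it achieves strictness through the construction $\inf_{\R^N\setminus B_r(x_0)}(v-\varphi)>0$ instead); it is correct, and your uniform estimate $|\I_\xi\psi_n(x_0)-\I_\xi\psi(x_0)|\leq C(\delta,s)/n$ and passage to $\I_k^\pm$ are fine. Second, the sentence ``which is exactly the tail integrand of $\I_{\xi^*}\psi(x_0)$ because $\psi=v$ on $B_R(x_0)^c$'' is slightly inaccurate as written, since your $\psi$ was defined with patching radius $\delta>R$, so $\psi=\varphi$ (not $v$) on $B_\delta(x_0)\setminus B_R(x_0)$. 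What the Fatou step actually bounds from below is $\I_{\xi^*}\hat\psi(x_0)$ for the smaller-radius patching $\hat\psi$; but since $\hat\psi\geq\psi$ with equality at $x_0$, the global comparison property gives $\I_{\xi^*}\hat\psi(x_0)\geq\I_{\xi^*}\psi(x_0)$, so the desired conclusion $\I\psi(x_0)+v^q(x_0)\leq 0$ still follows. This is a harmless slip, not a gap.
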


\begin{proof} For $x\in\R^N$, 
$\xi\in\mathbb{S}^{N-1}$, and $\varphi\in C(\R^N)$, we set 
\[
\delta(\varphi,x,\xi)(\tau)=\varphi(x+\tau\xi)+\varphi(x-\tau\xi)-2\varphi(x) \ \ \text{ for } \tau\in\R.
\] 
If $\varphi\in C^2(\R^N)$ and $\sup_{\R^N}(|\varphi|+|D^2\varphi|)<\infty$, then 
\begin{equation}\label{eq.lem1.1}
|\delta(\varphi,x,\xi)(\tau)|\leq \min\{4\|\varphi\|_\infty,\|D^2\varphi\|_\infty\tau^2\}. 
\end{equation}

Now, let $\varphi$ be a smooth test function touching $v$ from below at $x_0\in\R^N_+$. 
Thanks to \eqref{11.13}, we may assume that $\inf_{\R^N\setminus B_r(x_0)}(v-\varphi)>0$ for any $r>0$ 
and that $\varphi(x)=-R$ for all $|x|\geq R$ 
and some $R>0$. 
Fix $r>0$ so that 
$\overline{B_r(x_0)}\subset \R^N_+$. Let $x_\varepsilon$ be a minimum point of $v_\varepsilon-\varphi$ on 
$\overline{B_r(x_0)}$. As usual, since 
\[
(v_\varepsilon-\varphi)(x_0)\geq (v_\varepsilon-\varphi)(x_\varepsilon)\geq(v-\varphi)(x_\varepsilon)\geq (v-\varphi)(x_0)=0,
\]
we see that $x_\varepsilon\to x_0$ (up to a subsequence) and $v_\varepsilon(x_\varepsilon)\to v(x_0)$ as $\varepsilon\to 0^+$. 
Since $\inf_{\R^N\setminus B_r(x_0)}(v-\varphi)>0$ for all $r>0$, if $\varepsilon$ is sufficiently small, then 
$x_\varepsilon$ is a global minimum point of $v_\varepsilon-\varphi$. 
By the supersolution property of $v_\varepsilon$, for $\varepsilon$ sufficiently small, we have
\[
\I\varphi(x_\varepsilon)+v_\varepsilon^q(x_\varepsilon)\leq g_\varepsilon (x_\varepsilon),  
\]
from which it follows that
\[
\limsup_{\varepsilon\to 0^+}\I\varphi(x_\varepsilon)+v^q(x_0)\leq 0.
\]

It remains to show that
\[
\I\varphi(x_0)\leq \limsup_{\varepsilon\to 0^+}\I\varphi(x_\varepsilon). 
\]
Consider first the case when $\I=\I_k^+$. Fix any 
orthonormal frame $\{\xi_i\}_{i=1}^k\in\mathcal{V}_k$. Note that
\[
\sum_{i=1}^k\I_{\xi_i}\varphi(x)=C_s\int\limits_0^{+\infty} \tau^{-1-2s}\sum_{i=1}^k \delta(\varphi,x,\xi_i)(\tau)d\tau,
\]
and that for all $\tau>0$,
\[
\tau^{-1-2s}\Big|\sum_{i=1}^k\delta(\varphi,x,\xi_i)(\tau)\Big|\leq k(4\|\varphi\|_\infty+\|D^2\varphi\|_\infty)
\min\{\tau^{-1-2s},\tau^{1-2s}\}.
\]
By the dominated convergence theorem,
\[
\lim_{\varepsilon\to 0^+}\sum_{i=1}^k \I_{\xi_i}\varphi(x_\varepsilon)=C_s \int\limits_0^{+\infty} \sum_{i=1}^k \tau^{-1-2s}\delta(\varphi,x_0,\xi_i)(\tau)d\tau
=\sum_{i=1}^k \I_{\xi_i}\varphi(x_0). 
\]
Hence, 
\[
\sum_{i=1}^k \I_{\xi_i}\varphi(x_0)\leq \liminf_{\varepsilon\to 0^+}\I_k^+\varphi(x_\varepsilon), 
\]
and moreover,
\[
\I_k^+\varphi(x_0)\leq \liminf_{\varepsilon\to 0^+}\I_k^+\varphi(x_\varepsilon)\leq \limsup_{\varepsilon\to 0^+}\I_k^+\varphi(x_\varepsilon).
\]

Next, consider the case when $\I=\I_k^-$. For each $\varepsilon>0$, we select an orthonormal 
frame $\{\xi_i^\varepsilon\}\in\mathcal{V}_k$ so that 
\[
\I_k^-\varphi(x_\varepsilon)+\varepsilon >\sum_{i=1}^k \I_{\xi_i^\varepsilon}\varphi(x_\varepsilon). 
\] 
We may assume, by passing to a subsequence if necessary, that  for $\varepsilon\to 0^+$
\[
\{\xi_i^\varepsilon\} \to \{\xi_i^0\} \ \ \text{ in } \R^{Nk}. 
\]
Again, by the dominated convergence theorem, 
\[
\lim_{\varepsilon\to 0^+}\sum_{i=1}^k \I_{\xi_i^\varepsilon}\varphi(x_\varepsilon)=\sum_{i=1}^k\I_{\xi_i^0}\varphi(x_0).
\]
Thus, by exploiting the choice of $\{\xi_i^\varepsilon\}$, we deduce that 
\[
\I_k^-\varphi(x_0)\leq \sum_{i=1}^k \I_{\xi_i^0}\varphi(x_0)\leq \liminf_{\varepsilon\to 0^+}\I_k^-\varphi(x_\varepsilon)\leq \limsup_{\varepsilon\to 0^+}\I_k^-\varphi(x_\varepsilon). \qedhere
\] 
\end{proof}

\vspace{0.5cm}
\noindent
\textbf{Acknowledgements.} G. Galise was partially supported by GNAMPA-INdAM. H. Ishii was partially supported by the JSPS KAKENHI Grant Nos. JP20K03688, JP23K20224, and JP23K20604.

\bigskip
\noindent
\textsc{G. Galise}: Dipartimento di Matematica  Guido Castelnuovo,\\
Sapienza Universit\`a di Roma, P.le Aldo Moro 2, I-00185 Roma, Italy.\\
E-mail: \texttt{galise@mat.uniroma1.it}

\medskip

\noindent
\textsc{H. Ishii}: Institute for Mathematics and Computer Science, \\
Tsuda University, 2-1-1 Tsuda, Kodaira, Tokyo 187-8577, Japan,.\\
E-mail: \texttt{hitoshi.ishii@waseda.jp}

\end{document}